\documentclass[letterpaper,11pt]{amsart}

\usepackage[all]{xy}                        %

\usepackage[margin=1in]{geometry}

\CompileMatrices                            

\UseTips                                    

\input xypic
\usepackage[bookmarks=true]{hyperref}       

\usepackage{amssymb,latexsym,amsmath,amscd}
\usepackage{xspace}
\usepackage{color}

\usepackage{graphicx}

\usepackage[utf8]{inputenc}
\usepackage{fourier}
\usepackage{array}
\usepackage{makecell}

\usepackage{array}
\newcolumntype{L}[1]{>{\raggedright\let\newline\\\arraybackslash\hspace{0pt}}m{#1}}
\newcolumntype{C}[1]{>{\centering\let\newline\\\arraybackslash\hspace{0pt}}m{#1}}
\newcolumntype{R}[1]{>{\raggedleft\let\newline\\\arraybackslash\hspace{0pt}}m{#1}}


\reversemarginpar

\vfuzz2pt 
\hfuzz2pt 


\theoremstyle{plain}
\newtheorem{theorem}{Theorem}[section]
\newtheorem*{theorem*}{Theorem}
\newtheorem{proposition}[theorem]{Proposition}

\newtheorem{lemma}[theorem]{Lemma}

\theoremstyle{definition}
\newtheorem{definition}[theorem]{Definition}

\newtheorem{remark}[theorem]{Remark}

\newcommand{\enm}[1]{\ensuremath{#1}}          %
\newcommand{\op}[1]{\operatorname{#1}}
\newcommand{\cal}[1]{\mathcal{#1}}

\newcommand{\II}{\enm{\mathbb{I}}}
\newcommand{\JJ}{\enm{\mathbb{J}}}

\newcommand{\ZZ}{\enm{\mathbb{Z}}}

\newcommand{\PP}{\enm{\mathbb{P}}}

\newcommand{\VV}{\enm{\mathbb{V}}}

\newcommand{\EE}{\enm{\mathbb{E}}}
\newcommand{\UU}{\enm{\mathbb{U}}}

\newcommand{\Aa}{\enm{\cal{A}}}
\newcommand{\Bb}{\enm{\cal{B}}}

\newcommand{\Dd}{\enm{\cal{D}}}
\newcommand{\Ee}{\enm{\cal{E}}}
\newcommand{\Ff}{\enm{\cal{F}}}
\newcommand{\Gg}{\enm{\cal{G}}}

\newcommand{\Ii}{\enm{\cal{I}}}
\newcommand{\Jj}{\enm{\cal{J}}}

\newcommand{\Ll}{\enm{\cal{L}}}
\newcommand{\Mm}{\enm{\cal{M}}}

\newcommand{\Oo}{\enm{\cal{O}}}

\newcommand{\Ss}{\enm{\cal{S}}}

\newcommand{\Vv}{\enm{\cal{V}}}
\newcommand{\Ww}{\enm{\cal{W}}}

\renewcommand{\phi}{\varphi}
\renewcommand{\theta}{\vartheta}
\renewcommand{\epsilon}{\varepsilon}


\newcommand{\Proj}{\op{Proj}}
\newcommand{\Hom}{\op{Hom}}
\newcommand{\Ext}{\op{Ext}}

\newcommand{\End}{\op{End}}


      %

\renewcommand{\to}[1][]{\xrightarrow{\ #1\ }}







\newcommand{\ceil}[1]{\lceil #1 \rceil}

\newcommand{\old}[1]{}


\begin{document}

\title[aCM vector bundles on projective surfaces]{aCM vector bundles on projective surfaces \\of nonnegative Kodaira dimension}

\author{E. Ballico, S. Huh and J. Pons-Llopis}

\address{Universit\`a di Trento, 38123 Povo (TN), Italy}
\email{edoardo.ballico@unitn.it}

\address{Sungkyunkwan University, Suwon 440-746, Korea}
\email{sukmoonh@skku.edu}


\address{Department of Information Engineering, Computer Science and Mathematics - University of L'Aquila, Italy}
\email{juanfrancisco.ponsllopis@univaq.it}

\keywords{polarized surfaces, arithmetically Cohen-Macaulay sheaf, wild representation type}
\thanks{The first author is partially supported by GNSAGA of INDAM (Italy) and MIUR PRIN 2015 \lq Geometria delle variet\`a algebriche\rq. The second author is supported by the National Research Foundation of Korea(NRF) grant funded by the Korea government(MSIT) (No. 2018R1C1A6004285 and No. 2016R1A5A1008055). The third author is supported by a Postdoctoral Fellowship DISIM 2017-B0010.}

\subjclass[2010]{Primary: {14F05}; Secondary: {13C14, 16G60}}

\begin{abstract}
In this paper we contribute to the construction of families of arithmetically Cohen-Macaulay (aCM) indecomposable vector bundles on a wide range of polarized surfaces $(X,\Oo_X(1))$ for $\Oo_X(1)$ an ample line bundle. In many cases, we show that for every positive integer $r$ there exists a family of indecomposable aCM vector bundles of rank $r$, depending roughly on $r$ parameters, and in particular they are of \emph{wild representation type}. We also introduce a general setting to study the complexity of a polarized variety $(X,\Oo_X(1))$ with respect to its category of aCM vector bundles. In many cases we construct indecomposable vector bundles on $X$ which are aCM for all ample line bundles on $X$.
\end{abstract}

\maketitle



\section{Introduction}
In many areas of mathematics it plays a central role to understand the \emph{complexity} of the objects one is interested in. This complexity can be measured in many different ways. For instance, in representation theory of quivers, Gabriel's theorem states that a connected quiver supports only finitely many irreducible representations, i.e., of indecomposable modules over the associated path algebra, if and only if it is of type $A$, $D$, $E$. The classification of \emph{tame} quivers as {\it Euclidean graphs}, or {\it extended Dynkin diagrams}, of type $\tilde A$, $\tilde D$, $\tilde E$ was obtained right after. Remarkably, any other quiver supports arbitrarily large families of indecomposable representations, i.e., they turn out to be of \emph{wild representation type}.

Motivated by the results, similar questions were raised to understand the category of Cohen-Macaulay modules over an arbitrary $\mathbf{k}$-algebra $R$. When $R:=\mathbf{k}[x_0,\dots,x_n]/I$ is a graded algebra finitely generated in degree one over a field $\mathbf{k}$, Cohen-Macaulay modules correspond naturally to arithmetically Cohen-Macaulay sheaves over the closed subscheme $\Proj(R)\subset\PP^n$; see \cite{kleiman-landolfi}.

\begin{definition}\label{deff}
A coherent sheaf $\Ee$ on a projective scheme $(X, \Oo_X(1))$ is called {\it arithmetically Cohen-Macaulay} (for short, aCM) if the following conditions hold:
\begin{itemize}
\item [(i)] $\Ee$ is locally Cohen-Macaulay, i.e. the stalk $\Ee_x$ has depth equal to $\dim \Oo_{X,x}$ for any point $x$ on $X$;
\item [(ii)] $H^i(\Ee(t))=0$ for all $t\in \ZZ$ and $i=1,\ldots, \dim X-1$.
\end{itemize}
\end{definition}


\noindent The forementioned correspondence allowed to use a geometrical approach to this kind of questions. A milestone on this area was due to Horrocks, stating that the only indecomposable aCM sheaf on $\PP^n$, up to twist, is $\Oo_{\PP^n}$; see \cite{horrocks:punctured}. A similar classification was obtained for a smooth quadric hypersurface $Q\subset\PP^n$: there exist, besides the structural sheaf $\Oo_Q$, only one (for $n$ even) or two (for $n$ odd) irreducible aCM sheaves, the well-studied Spinor bundles; see \cite{Knorrer}. The combined work of many mathematicians allowed to complete the list of projective schemes -of positive dimension- supporting a finite number of aCM sheaves, called the varieties of \emph{finite aCM-representation type}: they are either a projective space $\PP^n$, a smooth quadric hypersurface $X\subset \PP^n$, a cubic scroll in $\PP^4$, the Veronese surface in $\PP^5$ or a rational normal curve; see \cite{eisenbud-herzog:CM}.

The next degree of complexity is offered by the elliptic curves: in this case, vector bundles of a given rank and degree on an elliptic curve $C$ are in bijection with the points of $C$; see \cite{atiyah:elliptic}. They are called varieties of \emph{tame aCM-representation type}. In \cite{faenzi-malaspina} it was shown that smooth  quartic surface scrolls in $\PP^5$ are also tame. Notice that all the projective schemes $X\subset\PP^n$ mentioned until now are arithmetically Cohen-Macaulay, namely the coordinate ring $R:=\mathbf{k}[x_0,\dots,x_n]/I_X$ is a Cohen-Macaulay ring. Indeed, the represention type of the remaining aCM projective schemes $X\subset\PP^n$ was set in \cite{faenzi-pons}: they support arbitrarily large families of indecomposable non-isomorphic aCM sheaves. They are, therefore, of \emph{wild aCM-representation type.}

On the other hand, up to our knowledge, a broader problem has been much less studied: which are the possible dimensions of families of aCM irreducible sheaves on polarized schemes $(X,\Oo_X(1))$, where the only requirement for the line bundle $\Oo_X(1)$ is to be ample. With this setting it is proved in \cite{casnati-special-ulrich} and \cite{casnati-special-ulrich-2} that polarized surfaces $(S,\Oo_S(1))$ such that $p_g=0$, $q=0$ or $1$, and $\Oo_S(1)$ is very ample with $h^1(\Oo_S(1))=0$ are of wild representation type. Indeed, the aCM vector bundles witnessing wilderness own a special property: they have the maximal permitted number of global sections, namely they are the so-called \emph{Ulrich vector bundles}. Again for $\Oo_X(1)$ very ample, it is proved in \cite{pons} that for polarized varieties $(X,\Oo_X(1))$ of dimension at least two, the embedding given by $\Oo_X(l)$ with $l\geq 3$ is of wild representation type under some mild assumptions on $\Oo_X(1)$.

The goal of the present paper is to contribute to this set of problems: we are constructing families of aCM vector bundles on a large range of polarized integral surfaces $(X, \Oo_X(1))$. In the following Theorem we summarize the results obtained:

\begin{theorem}\label{thm1.1}
Let $X$ be an integral projective surface with a fixed ample line bundle $\Oo_X(1)$ listed below. Then for each integer $r\ge 2$ there exists an $b_X(r)$-dimensional irreducible family $\{\Ee_{\alpha}\}_{\alpha \in \Gamma}$ of indecomposable aCM vector bundles of rank $r$ on $X$ such that for each $\alpha \in \Gamma$ there are only finitely many $\beta\in \Gamma$ with $\Ee_{\alpha} \cong \Ee_{\beta}$.
$$
\begin{tabular}{|C{0.5cm}|C{11cm}|C{0.9cm}|}
  \hline
no. &   \text{$X$}   & \text{$b_X(r)$}\\
  \hline
  \hline
$1$ & \makecell{$\pi : X\rightarrow Y$ a birational morphism with $\omega_Y \cong \Oo_Y$ and $q(Y)=0$\\ such that $\pi^{-1}(Y_{\mathrm{sing}})\cong Y_{\mathrm{sing}}$} & $2r$ \\[0.07cm]
\hline
$2$ &\makecell{$\omega_X \ncong \Oo_X$ locally free with $h^0(\omega_X)=0$ and $h^0(\omega_X^{\otimes 2})=1$, and $q(X)=0$} & $2\ceil{\frac{r}{2}}$ \\[0.07cm]
\hline
$3$ &\makecell{smooth and $q(X)=1$ with $\omega_X^\vee \otimes \Oo_X(1)$ trivial or ample}  & $1$ \\[0.07cm]
\hline
$4$ & \makecell{$\pi : X \rightarrow Y$ a birational morphism with an abelian surface $Y$\\ and $\omega_X^\vee \otimes \Oo_X(1)$ trivial or ample } & $r+1$ \\[0.07cm]
\hline
$5$ & \makecell{$\pi : X \rightarrow Y$ a birational morphism with a hyperelliptic surface $Y$} & $1$\\[0.07cm]
\hline
$6$ & \makecell{$\omega_X \cong \Oo_X(1)$ with $h^1(\omega_X^{\otimes n})=0$ for all $n\in \ZZ$ and $p_g\ge 3$} & $r$\\[0.07cm]
  \hline
\end{tabular}$$
\end{theorem}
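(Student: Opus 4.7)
The plan is to handle each of the six rows by a common template: construct the rank-$r$ bundle $\Ee_r$ as an iterated non-split extension
\[
0 \to \Ee_{r-1} \to \Ee_r \to \Ff \to 0
\]
of aCM objects of strictly smaller rank, parameterize the family by (an open subset of) the relevant $\Ext^1(\Ff,\Ee_{r-1})$, propagate the aCM property by induction on $r$ using the long exact cohomology sequence, and establish indecomposability by verifying simplicity $\End(\Ee_r) = \mathbf{k}$. The dimension $b_X(r)$ should then be read off from $\dim \Ext^1(\Ff,\Ee_{r-1})$ after quotienting by the scalar gauge and the choice of $\Ff$ inside the relevant family of building blocks, computed via Serre duality using the explicit $\omega_X$ and the aCM vanishing hypothesis on $\Ee_{r-1}$.

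The reservoir of aCM building blocks is adapted row-by-row to the geometry. In rows~1, 4, and 5 the building blocks are pulled back via $\pi$ from the birational model $Y$: the hypothesis $\pi^{-1}(Y_{\mathrm{sing}}) \cong Y_{\mathrm{sing}}$ in row~1 (and the smoothness of $Y$ in rows~4 and 5) makes $R\pi_\ast$ cohomologically transparent, so aCM-ness both descends and ascends along $\pi$. Varying the line bundle inside $\Pic^0(Y)$, which has dimension $2$ for an abelian surface and $1$ for a hyperelliptic surface, supplies continuous parameters. In row~2 the twists $\omega_X^{\otimes k}$ serve as building blocks, the numerical data $h^0(\omega_X)=0$, $h^0(\omega_X^{\otimes 2}) = 1$, $q(X)=0$ being exactly what is needed to control all intermediate cohomology. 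In row~6 the identity $\omega_X \cong \Oo_X(1)$ together with $h^1(\omega_X^{\otimes n})=0$ for all $n$ makes the line bundles $\Oo_X(n)$ themselves aCM, and $p_g \geq 3$ provides sufficient room in $\Ext^1$ to house an $r$-dimensional extension family. In row~3, $q(X)=1$ forces $\Pic^0(X)$ to be an elliptic curve, and pairing line bundles in $\Pic^0(X)$ against the Ulrich-type class $\omega_X^\vee \otimes \Oo_X(1)$ yields the $1$-parameter family.

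The hardest step, and the main obstacle shared by all six rows, is the joint control of indecomposability and local freeness on an open locus of the parameter space. Simplicity can fail whenever two building blocks in the chosen family admit unexpected morphisms between them, for instance when they differ by a torsion class in $\Pic^0$ of small order, so we must compute $\Hom(\Ff,\Ee_{r-1})$ and $\Hom(\Ee_{r-1},\Ff)$ carefully and restrict to the generic stratum before choosing the extension class. Local freeness of $\Ee_r$ is automatic when the $\Ff$ are line bundles on a smooth $X$ with smooth $Y$, but in row~1 the possible singularities of $Y$ force us to work inside the locally free sublocus of $\Ext^1$, cutting out the extensions whose middle term would pick up a torsion subsheaf. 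A secondary subtlety is matching the precise count $b_X(r)$, in particular the values $2r$ in row~1 and $r+1$ in row~4, which must emerge from iterating the one-step extension construction rather than only from a crude $\Ext^1$ bound.
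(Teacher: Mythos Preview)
Your template has genuine gaps in rows 1, 2, and 6, where the proposed building blocks cannot supply the required continuous parameters. In row 1 you say the blocks are pulled back from $Y$, but $q(Y)=0$ means $\Pic^0(Y)$ is a point, so there is nothing to vary; moreover $\omega_Y\cong\Oo_Y$ forces $h^1(\Oo_Y)=0$, hence $\Ext^1(\Oo_Y,\Oo_Y)=0$ and iterated self-extensions of line bundles are all split. The same obstruction kills row 6 outright: the hypothesis $h^1(\omega_X^{\otimes n})=0$ for all $n$ says exactly that $\Ext^1(\Oo_X(m),\Oo_X(n))=0$ for every pair $m,n$, so no nontrivial extensions of the $\Oo_X(n)$ exist and $p_g\ge 3$ is irrelevant to $\Ext^1$. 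In row 2 the twists $\omega_X^{\otimes k}$ form a discrete (often finite, as $\omega_X^{\otimes 2}$ is effective and trivial for minimal Enriques) set, so cannot yield a $2\lceil r/2\rceil$-dimensional family. The paper's mechanism in all three of these rows is entirely different: one builds $\Ee$ as an extension of an iterated extension $\Jj$ of \emph{ideal sheaves} $\Ii_{S_i,X}$ of finite point-sets by a trivial (or $\omega_X$-twisted) bundle, and the parameters come from moving the points in $X_{\mathrm{reg}}$, not from $\Ext^1$ between line bundles. Local freeness is obtained via Cayley--Bacharach in rank two plus semicontinuity.

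Your indecomposability criterion is also wrong in rows 3, 4, 5: iterated non-split self-extensions of a line bundle $\Ll$ are essentially never simple. The Atiyah bundle $F_r$ on an elliptic curve has $\dim\End(F_r)=r$, and the unipotent bundles on an abelian surface behave similarly. The paper instead argues indecomposability structurally: in row 3 by pushing forward along the Albanese map $\phi:X\to C$ and invoking Atiyah's classification on $C$; in row 4 by Mukai's bijection between unipotent bundles of rank $r$ and length-$r$ Artinian quotients of $\widehat{\Oo}_{\widehat{Y},0}$, so indecomposability is read off from the module side and the count $r+1$ is $\dim\Pic^0(X)+\dim$(punctual Hilbert scheme)$=2+(r-1)$; in rows 1, 2, 6 by showing $\Jj=\mathrm{coker}(\rho_\Ee)$ is indecomposable and that a splitting of $\Ee$ would force a trivial factor. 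None of these reduces to $\End(\Ee_r)=\mathbf{k}$.
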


Theorem \ref{thm1.1} shows that the projective surfaces of Kodaira dimension zero, possibly with singularity, are of wild representation type, except the case of hyperelliptic surfaces. G.~Casnati proved in \cite{casnati-special-ulrich-2} that hyperelliptic surfaces are of wild representation type with respect to a very ample polarization. Note that we do not assume in Theorem \ref{thm1.1} that $X$ is minimal or $\Oo_X(1)$ is very ample, while the result in \cite{casnati-special-ulrich-2} is more powerful in a sense that it gives wildness with respect to Ulrich vector bundles. 

The strategy for Theorem \ref{thm1.1} is two-fold. One is to consider zero-dimensional subschemes of length equal to the second Chern class of the aCM vector bundles in consideration, from which we construct aCM vector bundles of arbitrary rank by a series of extensions. The cases no. $1$, $2$ and $6$ are handled by this method respectively in Theorem \ref{k3}, Theorem \ref{enriquesmain} and Theorem \ref{thm22}; in case no. $6$, for the construction of a family of aCM vector bundles of rank $r$ even, it is enough to suppose that $p_g\ge 2$. The second strategy is to consider a family of aCM line bundles, parametrized by a non-empty open Zariski subset of $\mathrm{Pic}^0(X)$, from which we construct aCM vector bundles of arbitrary rank by iterated extensions. the cases no. $3$, $4$ and $5$ are handled by this method respectively in Proposition \ref{c1}, Theorem \ref{cc1} and Proposition \ref{hyperelliptic}. 

Based on the results in Theorem \ref{thm1.1} we introduce a set-up to measure the complexity of a polarized variety $(X, \Oo_X(1))$. Define
$$a_{X, \Oo_X(1)}(r):=\mathrm{sup}_{\Gamma} \Bigg\{ \dim \Gamma~ \Bigg |   ~\makecell{\Gamma \text{ runs over the parameter spaces of indecomposable}\\ \text{ aCM vector bundles of rank $r$ on $X$}} \Bigg  \}$$
with the convention that $a_{X, \Oo_X(1)}(r)=-\infty$ if there is no indecomposable aCM vector bundle of rank $r$. Then we have $a_{X, \Oo_X(1)}(r) \ge b_X(r)$ for the surfaces listed in Theorem \ref{thm1.1}. We also define
\begin{align*}
a_X(r)&:=\mathrm{sup}\left \{ a_{X, \Oo_X(1)}(r) ~|~ \Oo_X(1) \text{ ample}\right \}~,~a_X'(r):=\mathrm{inf}\left \{ a_{X, \Oo_X(1)}(r) ~|~ \Oo_X(1) \text{ ample}\right \}.
\end{align*}
In many construction of aCM vector bundles, the polarization is assumed to be very ample, in which case we give similar definitions for $a_X(r)$ and $a_X'(r)$, if we consider only very ample polarizations in their definitions. Then we may raise several questions.
\begin{itemize}
\item For a given $X$, what can be said about the following limits?
$$\limsup _{r\to \infty} a_X(r)~,~ \limsup _{r\to \infty} a_X'(r)~,~\liminf _{r\to \infty} a_X(r)~\text{ and }~\liminf _{r\to \infty} a_X'(r)$$
\item What can be said about following suprema
$$\mathrm{sup}_X {\left \{a_X(r)\right\}} \text{ and } \mathrm{sup}_X\left \{a_X'(r)\right\},$$
where $X$ runs over all smooth projective varieties, all varieties with a prescribed Kodaira dimension or all varieties in a prescribed interesting class, e.g. K3 surfaces?
\end{itemize}
In those questions with $(X, \Oo_X(1)$ polarized surfaces, we may allow singular surfaces, but locally CM, e.g. normal or singularity with embedded dimension at most three, so that we may consider non-locally free aCM sheaves. We do not know if we may obtain bigger dimensional families of indecomposable aCM sheaves by considering non-locally free aCM sheaves.

For higher dimensional smooth varieties we prove the following result.
\begin{theorem}\label{cc1}
Let $X$ be a smooth projective variety of dimension $n\ge 2$, birational to an abelian variety and fix an ample line bundle $\Oo_X(1)$ with $\omega_X^\vee \otimes \Oo_X(1)$ ample. Then $X$ is wild with respect to $\Oo_X(1)$ and
\[
a_{X, \Oo_X(1)}(r) \ge (n-1)r+1.
\qedhere \]
\end{theorem}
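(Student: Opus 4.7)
I would follow the pattern used for cases 3--5 of Theorem \ref{thm1.1}: exhibit a positive-dimensional family of aCM line bundles in $\Pic^0(X)$ and iterate self-extensions to produce indecomposable aCM bundles of arbitrary rank.

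\emph{Step 1: every $L \in \Pic^0(X)\setminus\{\Oo_X\}$ is aCM.} Since $L$ is numerically trivial and $\omega_X^\vee \otimes \Oo_X(1)$ is ample, the line bundle $L \otimes \Oo_X(t) \otimes \omega_X^{-1}$ is ample for every $t \ge 1$, so Kodaira vanishing yields $H^i(L(t)) = 0$ for $i \ge 1$; Serre duality handles $t \le -1$ the same way. For $t = 0$ I would invoke the Albanese morphism $\alpha : X \to A$, which is birational because $\Alb(X) \cong A$ by birational invariance. Smoothness of $A$ forces $R^j\alpha_* \Oo_X = 0$ for $j > 0$ in characteristic zero (smooth varieties have rational singularities), and $\alpha^{*}$ induces an isomorphism $\Pic^0(A) \xrightarrow{\sim} \Pic^0(X)$. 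Writing $L = \alpha^{*} M$ with $M \in \Pic^0(A)\setminus\{\Oo_A\}$, the projection formula gives $R^j\alpha_{*}L \cong M \otimes R^j\alpha_{*}\Oo_X$, so Leray collapses to $H^i(X,L) = H^i(A,M) = 0$ by Mumford's theorem on abelian varieties.

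\emph{Step 2: construction and dimension count.} Fix such an $L$ and recursively set $E_1 := L$, $E_k$ a nontrivial extension $0 \lra E_{k-1} \lra E_k \lra L \lra 0$; each $E_k$ is aCM, since the aCM condition is closed under extensions. Factoring $E_k = L \otimes F_k$ with $F_k$ an iterated self-extension of $\Oo_X$ reduces the classifying group to $\Ext^1(L, E_{k-1}) \cong H^1(F_{k-1})$. Using the identification $H^\bullet(\Oo_X) \cong H^\bullet(\Oo_A) \cong \Lambda^\bullet V$ with $V := H^1(\Oo_X)$ of dimension $n$ (again from the Leray spectral sequence for $\alpha$), a Koszul-type analysis of the long exact sequence attached to $0 \lra F_{k-1} \lra F_k \lra \Oo_X \lra 0$ shows inductively that $\dim H^1(F_k) = n$ whenever the extension classes are chosen generically. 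Hence each step of the iteration contributes $n-1$ parameters after projectivizing $\Ext^1$, giving a family of total dimension $n + (r-1)(n-1) = (n-1)r + 1$.

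\emph{Step 3: indecomposability and finite fibers.} The main obstacle is to verify that a generic $E_r$ is indecomposable and that the parametrization is generically finite-to-one on isomorphism classes. For indecomposability I would argue by induction that $\End(E_r) = \CC$ for generic parameters, using $\Hom(L, L') = 0$ for distinct $L, L' \in \Pic^0(X)$ (again Mumford vanishing via $\alpha$) together with genericity of the classes in $\Ext^1(L, E_{k-1})$ to rule out non-scalar endomorphisms preserving the filtration; equivalently, under the Fourier--Mukai equivalence $D^b(A) \simeq D^b(\hat A)$, iterated self-extensions of $\Oo_A$ correspond to length-$r$ fat-point sheaves at $0 \in \hat A$, whose indecomposability as modules over the local ring is manifest. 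The line bundle $L$ is then recoverable from the socle filtration of $E_r$, so the map from the $((n-1)r+1)$-dimensional parameter space to isomorphism classes has at worst finite fibers arising from the $\CC^*$-action and from the finitely many symmetries of the iterated-extension data. Steps 1 and 2 are essentially formal consequences of Kodaira and Mumford vanishing; the technical core of the argument lies in Step 3.
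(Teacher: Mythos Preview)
Your overall architecture matches the paper's: build indecomposable unipotent bundles on the abelian variety, pull them back to $X$, and twist by an aCM $\Ll\in\Pic^0(X)$. Step~1 is fine and in fact slightly sharper than what the paper uses (the paper is content with \emph{generic} $\Ll$ via Green--Lazarsfeld, whereas your argument via $R^j\alpha_*\Oo_X=0$ and Mumford's vanishing on $A$ handles every nontrivial $\Ll$).

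The substantive divergence is in how the $(n-1)(r-1)$ parameters of unipotent bundles are obtained. The paper does \emph{not} count extension classes. It invokes Mukai's theorem \cite{muk} that rank-$r$ unipotent bundles on $A$ are in bijection with length-$r$ modules over $R=\widehat{\Oo}_{\hat A,0}$, then restricts to cyclic modules $R/I$, i.e.\ to the punctual Hilbert scheme $\mathrm{Hilb}^r(\hat A)_0$. The curvilinear locus there is irreducible of dimension $(n-1)(r-1)$ by the work of Brian\c{c}on, Iarrobino and Granger, and because it is already a moduli space of ideals, no further quotient is needed; indecomposability is immediate since a cyclic module over a local ring is indecomposable.

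Your Step~2/Step~3 route has two gaps that the Mukai--Hilbert argument circumvents. First, the claim $\dim H^1(F_k)=n$ for generic $F_k$ is not established; your ``Koszul-type analysis'' would have to track the $H^*(\Oo_A)$-module structure on $H^*(F_{k-1})$ and the effect of cupping with a generic class in $H^1(F_{k-1})$, which is not straightforward once $k\ge 3$. Second, and more seriously, even if the iterated-extension parameter space has dimension $(r-1)(n-1)$, you have not shown the map to isomorphism classes is generically finite: $\Aut(F_{k-1})$ acts on $\PP\Ext^1(\Oo_X,F_{k-1})$, and for unipotent $F_{k-1}$ corresponding to a length-$(k-1)$ module $M$ one has $\End(F_{k-1})\cong\End_R(M)$, which is typically $(k-1)$-dimensional (e.g.\ $\End_R(R/I)=R/I$ in the cyclic case). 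So the orbits can be positive-dimensional and your finite-fiber claim in Step~3 is not justified. Ironically, the Fourier--Mukai observation you make in Step~3 is exactly the paper's tool: if you use it for the \emph{dimension count} (identifying the moduli with the punctual Hilbert scheme) rather than only for indecomposability, both gaps disappear and you recover the paper's proof.
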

For the proof of Theorem \ref{cc1} we use in an essential way a construction by S. Mukai of vector bundles on abelian varieties in \cite{muk}, a generic vanishing for smooth varieties with maximal Albanese dimension in \cite{gl1,gl2} and results on the local Hilbert schemes in \cite{bi, g}.

\begin{remark}
In cases no. $1$, $2$ and $6$ of Theorem \ref{thm1.1} the indecomposable vector bundles that we construct are aCM for any ample line bundles on $X$. On the other hand, in cases no. $3$, $4$ and $5$ of Theorem \ref{thm1.1} and Theorem \ref{cc1} the indecomposable vector bundles that we construct are aCM for every ample line bundles $\Oo_X(1)$ with $\omega _X^\vee \otimes \Oo_X(1)$ ample.
\end{remark}

Recall from Theorem \ref{thm1.1} that we obtain irreducible families of indecomposable aCM vector bundles of rank $r$ on several projective surfaces, whose dimensions are at most linear polynomials in $r$. Nonetheless, we may not expect that $a_{X, \Oo_X(1)}(r)$ is linear in $r$ for any projective surface. Indeed, Remark \ref{cc2} shows that for $X$ as in Theorem \ref{cc2} with $n\ge 3$ we get a lower bound for $a_{X,\Oo_X(1)}(r)$ greater than linear, but less than quadratic, in $r$.

\begin{remark}\label{cc2}
Let $X$ be as in Theorem \ref{cc1}. For $n\ge 3$ and $r\gg 0$, $\UU_r$ is reducible and there are positive constants $\alpha _n$ and $\beta _n$ such that
$$\alpha _n r^{2-2/n} \le \dim B_f[r] \le \beta _n r^{2-2/n}$$
by \cite{bi} and \cite[page 6]{g}. Then from $\dim B_f[r] \le a_{X, \Oo_X(1)}(r)$ we get
\[
\liminf _{r\to \infty} ~\frac{a_r(X,\Oo _X(1))}{r^{2-2/n}} >0.
\]
\end{remark}

\noindent On the other hand, in Section \ref{gege} we suggest examples of smooth surfaces of general type with at least a quadratic lower bound for $a_{X, \Oo_X(1)}(r)$.

We would like thank C.~Ciliberto for suggesting this problem.







\section{K3-like surfaces}

In this section we assume that $X$ is integral with $\omega _X\cong \Oo _X$ and $q(X)=0$. Set $\tilde{g}:=h^0(\Oo_X(1))$; if $X$ is a K3 surface, then we have $2\tilde{g}-4=d$ and $g:=\tilde{g}-1$ is called the genus of $X$.

\begin{proposition}\label{k1}
For each $r\in \ZZ$ with $2\le r \le \tilde{g}$, there exists an indecomposable aCM vector bundle $\Ee$ of rank $r$ on $X$ with $\det (\Ee )\cong \Oo _X$ and $c_2(\Ee )=r$
\end{proposition}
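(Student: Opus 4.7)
The plan is a Serre--Horrocks type construction. First I would fix a general reduced zero-dimensional subscheme $Z\subset X$ of length $r$. From the structure sequence $0 \to \Ii_Z \to \Oo_X \to \Oo_Z \to 0$ and $q(X)=0$ one computes $H^1(\Ii_Z) \cong \CC^{r-1}$, and Serre duality together with $\omega_X \cong \Oo_X$ then gives $\Ext^1(\Ii_Z, \Oo_X) \cong \CC^{r-1}$. Viewing the identity element of
\[
\Ext^1(\Ii_Z, \Oo_X^{\oplus(r-1)}) = \Hom\bigl(\CC^{r-1}, \Ext^1(\Ii_Z, \Oo_X)\bigr)
\]
as an extension class would produce a coherent sheaf $\Ee$ sitting in
\[
0 \to \Oo_X^{\oplus(r-1)} \to \Ee \to \Ii_Z \to 0,
\]
with $\det(\Ee) = \Oo_X$ and $c_2(\Ee) = r$ immediate from the multiplicativity of Chern classes.

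\textbf{Local freeness.} Next I would show $\Ee$ is locally free by applying $\mathcal{H}om(-, \Oo_X)$ to the defining extension. Using $\Ii_Z^\vee \cong \Oo_X$ together with $\mathcal{E}xt^1(\Ii_Z, \Oo_X) \cong \Oo_Z$ (local duality plus $\omega_X \cong \Oo_X$), the dualized sequence reads
\[
0 \to \Oo_X \to \Ee^\vee \to \Oo_X^{\oplus(r-1)} \xrightarrow{\sigma} \Oo_Z \to \mathcal{E}xt^1(\Ee, \Oo_X) \to 0.
\]
Under the tautological choice of extension class, a basis of $\Ext^1(\Ii_Z, \Oo_X)$ necessarily has a nonzero coordinate at each point of $Z$ (it spans the hyperplane of mean-zero functions in $H^0(\Oo_Z)$), so $\sigma$ is surjective pointwise on $Z$. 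Hence $\mathcal{E}xt^1(\Ee, \Oo_X) = 0$ and $\Ee$ is a rank-$r$ vector bundle.

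\textbf{aCM condition and indecomposability.} For the aCM property I would combine $\omega_X \cong \Oo_X$, $q(X) = 0$, and Kodaira/Serre duality to see that $H^1(\Oo_X(t)) = 0$ for every $t \in \ZZ$. The long exact sequence of the defining extension then collapses to
\[
H^1(\Ee(t)) = \ker\bigl(H^1(\Ii_Z(t)) \to H^2(\Oo_X(t))^{\oplus(r-1)}\bigr),
\]
where the right-hand arrow is cup product with the extension classes. For $t \ge 1$ the bound $r \le \tilde{g} = h^0(\Oo_X(1)) \le h^0(\Oo_X(t))$ allows a generic $Z$ to impose independent conditions on $|\Oo_X(t)|$, so already $H^1(\Ii_Z(t)) = 0$. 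For $t \le 0$, injectivity of the cup product is Serre-dual to surjectivity of a multiplication map involving $H^0(\Oo_X(-t))$ and $\Ext^1(\Ii_Z, \Oo_X)$, which I would arrange for generic $Z$. Finally, indecomposability follows from $\End(\Ee) = \CC$: applying $\Hom(\Ee, -)$ to the defining extension and using the nondegeneracy of the tautological class, every endomorphism of $\Ee$ has to factor through a scalar on $\Ii_Z$ and is itself a scalar.

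\textbf{Main obstacle.} I expect the hardest step to be the negative-twist aCM verification: one must track the cup-product structure of the $r-1$ chosen extension classes finely enough to conclude injectivity of the map $H^1(\Ii_Z(t)) \to H^2(\Oo_X(t))^{\oplus(r-1)}$ for every $t \le 0$. This is precisely where the bound $r \le \tilde{g}$, the Cayley--Bacharach position of $Z$, and the tautological choice of extension must cooperate.
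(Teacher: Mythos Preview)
Your setup and the arguments for local freeness and for $h^1(\Ee(t))=0$ when $t\ge 0$ are sound and essentially match the paper's. Two points need attention.

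\textbf{Indecomposability.} The claim $\End(\Ee)=\CC$ is false for $r\ge 2$. Applying $\Hom(\Ee,-)$ to the defining sequence yields
\[
0 \to \Hom(\Ee,\Oo_X)^{\oplus(r-1)} \to \End(\Ee) \to \Hom(\Ee,\Ii_Z) \to \Ext^1(\Ee,\Oo_X)^{\oplus(r-1)},
\]
and one computes $\Hom(\Ee,\Oo_X)=\CC$ (generated by $\Ee\twoheadrightarrow\Ii_Z\hookrightarrow\Oo_X$), $\Hom(\Ee,\Ii_Z)=\CC$, and $\Ext^1(\Ee,\Oo_X)=0$ from the tautological choice of class. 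Hence $\dim\End(\Ee)=r$. Indecomposability still holds, because the $(r-1)$-dimensional kernel consists of nilpotents (each factors through the composition $\Oo_X^{\oplus(r-1)}\hookrightarrow\Ee\twoheadrightarrow\Ii_Z$, which is zero), so $\End(\Ee)$ is a local ring. The paper sidesteps this computation: since $h^0(\Ee)=r-1$ and the image of the evaluation map is exactly $\Oo_X^{\oplus(r-1)}$, any splitting $\Ee\cong\Ff_1\oplus\Ff_2$ would split the cokernel $\Ii_Z$ of the evaluation map, forcing one $\Ff_i$ to be trivial---contradicting the fact that a basis of $\Ext^1(\Ii_Z,\Oo_X)$ leaves no trivial summand.

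\textbf{aCM for $t<0$.} You rightly flag this as the crux, and your cup-product approach can in fact be completed: dualizing gives $0\to\Oo_X\to\Ee^\vee\to\Kk\to 0$ with $\Kk=\ker(\sigma\colon\Oo_X^{\oplus(r-1)}\to\Oo_Z)$, and for $s>0$ one checks directly that $H^0(\Oo_X(s))^{\oplus(r-1)}\to H^0(\Oo_Z)$ is onto for generic $Z$, whence $h^1(\Ee^\vee(s))=0$ and Serre duality finishes. The paper takes a much shorter route. For the rank-two bundle $\Gg$ (the case $r=2$) one has $\Gg^\vee\cong\Gg$ since $\det\Gg\cong\Oo_X$, so Serre duality together with $\omega_X\cong\Oo_X$ gives $h^1(\Gg(t))=h^1(\Gg(-t))$; thus $\Gg$ is aCM as soon as the $t\ge 0$ case is known. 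For $r>2$ the paper degenerates $\Ee$ inside the extension space to $\Ee_0=\Gg\oplus\Oo_X^{\oplus(r-2)}$ by scaling all but one coordinate of the extension class to zero; since $\Ee_0$ is aCM, semicontinuity gives $h^1(\Ee(t))=0$ for all $t$. This avoids any analysis of cup products or multiplication maps.
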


\begin{proof}
Take a general set of points $S\subset X_{\mathrm{reg}}$ with $|S|= r$ and let $\Ee $ be a general sheaf fitting into the following exact sequence
\begin{equation}\label{eqk1}
0 \to \Oo _X^{\oplus (r-1)} \stackrel{j}{\to} \Ee \to \Ii _{S,X} \to 0.
\end{equation}
Note that $\mathrm{ext}_X^1(\Ii_{S,X}, \Oo_X)=h^1(\Ii_{S,X})=r-1$ and the sheaf $\mathrm{Im}(j)$ is the image of the evaluation map $H^0(\Ee)\otimes \Oo_X \rightarrow \Ee$. By generality of the extension (\ref{eqk1}) we may choose a basis $\{\epsilon_1, \ldots, \epsilon_{r-1}\} $ of $\mathrm{Ext}_X^1(\Ii_{S,X}, \Oo_X)$ inducing (\ref{eqk1}). In particular, $\Ee$ has no trivial factor. In case $r=2$, the sheaf $\Ee$ is locally free from the Cayley-Bacharach condition. For $r>2$, the vector bundle $\Ff \oplus \Oo_X^{\oplus (r-2)}$ with a vector bundle $\Ff$ of rank two fitting into (\ref{eqk1}) as the middle term for $r=2$, also fits into (\ref{eqk1}) as the middle term. Since local freeness is an open condition, the sheaf $\Ee$ is locally free.

Assume $\Ee \cong \Ff _1\oplus \Ff _2$ with $\mathrm{rank}(\Ff _1 )=s$ and $0 < s<r$. For each $i\in \{1,2\}$, let $\Gg _i\subseteq \Ff _i$ be the image of the evaluation map $H^0(\Ff _i)\otimes \Oo _X\rightarrow \Ff _i$ with $s_i:=\mathrm{rank}(\Gg _i)$. Then we get $\Gg _1\oplus \Gg _2 \cong \Oo _X^{\oplus (r-1)}$. In particular, each $\Gg _i$ is trivial and $s_1\in \{s, s-1\}$. Note that $(\Ff _1/\Gg _1)\oplus (\Ff _2/\Gg _2)\cong \Ii _{S,X}$ has no torsion. If $s_1=s$, then we get $\Ff_1/\Gg_1 \cong 0$, i.e. $\Ff_1 \cong \Oo_X^{\oplus s}$, which is impossible since $\Ee$ has no trivial factor. If $s_1=s-1$, then we would get a contradiction similarly from $\Ff_2 \cong \Oo_X^{\oplus (r-s)}$. Thus $\Ee$ is indecomposable.

Then it remains to show that $\Ee$ is aCM. Since $h^0(\Oo_S)\le h^0(\Oo _X(1))$ and $S$ is general, we have $h^1(\Ii _{S,X}(t)) =0$ for all $t>0$. Now $\{\epsilon _1,\dots ,\epsilon _{r-1}\}$ is a basis for $\mathrm{Ext}_X^1(\Ii_{S,X}, \Oo_X)$ and so it induces an isomorphism $H^1(\Ii_{S,X}) \rightarrow H^2(\Oo_X^{\oplus (r-1)})$. Thus we have $h^0(\Ee(t))=0$ for all $t\ge 0$. For any $\lambda \in \mathbf{k}$, let $\Ee _{\lambda}$ denote the middle term of the extension corresponding to $(\epsilon _1,\lambda \epsilon _2,\dots ,\lambda \epsilon _{r-1})$; we have $\Ee _\lambda \cong \Ee$ for $\lambda \ne 0$ and $\Ee _0 \cong \Gg \oplus \Oo _X^{\oplus (r-2)}$ with $\Gg$ induced by the extension $\epsilon_1$. As above we see that $h^1(\Gg (t)) =0$ for all $t\ge 0$. Since $\Gg$ is locally free from the Cayley-Bacharach condition and generality of $\epsilon_1$, we use Serre's duality to obtain $h^1(\Gg (t)) =h^1(\Gg (-t)) =0$ for $t<0$. Thus $\Ee_0$ is aCM. Now using the semicontinuity theorem for cohomology, we obtain $h^1(\Ee(t))=0$ because $\Ee _\lambda \cong \Ee$.
\end{proof}

\begin{remark}\label{k0}
Consider the exact sequence (\ref{eqk1}) with $r=2$. Since $\mathrm{ext}_X^1(\Ii_{S,X}, \Oo_X)=h^1(\Ii_{S,X})=1$, there exists a unique nontrivial extension of $\Ii_{S,X}$ by $\Oo_X$; denote its middle term by $\Gg_S$. Since the Cayley-Bacharach condition is satisfied, the sheaf $\Gg_S$ is an aCM vector bundle of rank two on $X$.
\end{remark}

\begin{theorem}\label{k2}
For each integer $2\le r\le \tilde{g}$, there exists a $2r$-dimensional family $\{\Ee _\alpha\}_{\alpha \in \Gamma}$ of indecomposable aCM vector bundles of rank $r$ on $X$ with $\det (\Ee _\alpha )\cong \Oo _X$ and $c_2(\Ee _\alpha )=r$ such that for each $\alpha \in \Gamma$ there are only finitely many $\beta \in \Gamma$ with $\Ee _\beta \cong \Ee
_\alpha$.
\end{theorem}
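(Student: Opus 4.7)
The plan is to vary the set $S$ of Proposition \ref{k1} and parametrize the family by the Hilbert scheme of length-$r$ reduced subschemes of the smooth locus. Let $\Gamma$ be the open subset of $\mathrm{Hilb}^r(X_{\mathrm{reg}})$ consisting of $S$ satisfying the genericity assumptions of Proposition \ref{k1}; since $X$ is a surface, $\dim \Gamma = 2r$. For each $S \in \Gamma$, Proposition \ref{k1} produces an indecomposable aCM vector bundle $\Ee_S$ of rank $r$ with $\det(\Ee_S)\cong \Oo_X$ and $c_2(\Ee_S)=r$, built from a generic basis of $\mathrm{Ext}^1_X(\Ii_{S,X},\Oo_X)$. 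Any two such bases differ by a $GL_{r-1}$-action on the $\Oo_X^{\oplus(r-1)}$-summand, inducing an isomorphism on the middle terms, so $\Ee_S$ is well-defined up to isomorphism by $S$.

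The main step is to show that the map $S \mapsto [\Ee_S]$ is finite-to-one, so that the parametrizing variety really has dimension $2r$. I would reconstruct $S$ intrinsically from $\Ee_S$. Taking cohomology of
$$0 \to \Oo_X^{\oplus(r-1)} \to \Ee_S \to \Ii_{S,X} \to 0$$
and using $h^1(\Oo_X)=0$ (from $q(X)=0$) together with $h^0(\Ii_{S,X})=0$ (since $S\ne\emptyset$ and $X$ is integral), one obtains $h^0(\Ee_S)=r-1$; moreover the injection $\Oo_X^{\oplus(r-1)} \hookrightarrow \Ee_S$ coincides with the evaluation morphism on global sections. Hence the cokernel of $H^0(\Ee_S)\otimes \Oo_X \to \Ee_S$ is exactly $\Ii_{S,X}$, from which $S$ is recovered as the subscheme cut out by that ideal. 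Consequently any isomorphism $\Ee_S \cong \Ee_T$ identifies these cokernels and forces $S=T$, so the parametrization is actually injective, a fortiori finite-to-one.

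The main obstacle I foresee is turning this pointwise construction into a genuine algebraic family over $\Gamma$, rather than a fiberwise one. The plan is to work with the universal ideal sheaf $\Ii$ on $\Gamma\times X$ and form the relative $\mathrm{Ext}^1$-sheaf; the dimensions $h^0(\Ii_{S,X})=0$ and $h^1(\Ii_{S,X})=r-1$ are constant on $\Gamma$ (after shrinking $\Gamma$ if needed), so the relative Ext is locally free, and a generic $(r-1)$-tuple of sections of it over $\Gamma$ produces a universal extension whose fibers are the bundles $\Ee_S$. Indecomposability, local freeness and the aCM property are then open conditions by semicontinuity applied to the flat family, so they hold on an open subset of $\Gamma$ containing the points provided by Proposition \ref{k1}, and one redefines $\Gamma$ to be this locus.
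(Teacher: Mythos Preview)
Your proof is correct and matches the paper's approach: vary $S$ over the open locus of $r$-point subsets of $X_{\mathrm{reg}}$, apply Proposition \ref{k1} fiberwise, build the family via the relative $\Ext^1$-sheaf, and recover $S$ from $\Ee_S$ as the cokernel of the evaluation map to obtain finite-to-one (indeed injective) parametrization. The only cosmetic difference is that the paper takes $\Gamma$ to be a $2r$-dimensional subvariety of the total space of the rank-$(r-1)^2$ relative Ext bundle $\Vv\to\UU$ mapping quasi-finitely and dominantly to $\UU$, rather than an open subset of $\UU$ itself, thereby sidestepping your choice of a generic frame of sections; both packagings give the same conclusion.
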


\begin{proof}
For any subset $S\subset X_{\mathrm{reg}}$ with $|S|=r$, define $\mathbb{E}'(S)$ to be the subset of $\mathbb{E}(S):=\mathrm{Ext}_X^1(\Ii_{S,X}, \Oo_X^{\oplus (r-1)})$, consisting of all extensions whose corresponding middle terms are aCM and indecomposable vector bundles. By Proposition \ref{k1}, $\EE '(S)$ is a non-empty open subset of $\mathbb{E}(S)$ and each $[\Ee] \in \EE '(S)$ has trivial determinant with $c_2(\Ee ) =r$.

Letting $\UU:=\{S\subset X_{\mathrm{reg}}~|~|S|=r\}$, there is a vector bundle $\Vv$ of rank $(r-1)^2$ on $\UU$ with $\EE(S)$ as its fibre over $S\in \UU$, since $\mathrm{ext}_X^1(\Ii _{S,X},\Oo _X^{\oplus (r-1)}) = (r-1)^2$ for all $S\in \UU$. Then there is a non-empty open subset $\Vv' \subset \Vv$ with $\Vv ' _{|S} = \EE '(S)$ for a general $S\in \UU$. Thus there exists an irreducible variety $\Gamma \subset \Vv'$ such that the restriction of the map $\Vv \rightarrow \UU$ to $\Gamma$ is quasi-finite and dominant. In particular, we have $\dim \Gamma = \dim \UU =2r$.

For $[\Ee] \in \EE '(S) $ we have $h^0(\Ee )=r-1$ and the cokernel of the evaluation map $H^0(\Ee )\otimes \Oo _X\rightarrow \Ee$ is isomorphic to $\Ii _{S,X}$. Thus for $[\Ee]\in \EE'(S)$ and $[\Ff]\in \EE'(S')$ with $S\ne S' \in \UU$, we have $\Ee \ncong \Ee'$. Since the map $\Gamma \rightarrow \UU$ is quasi-finite, the variety $\Gamma$ satisfies the requirements for the assertion.
\end{proof}



\begin{theorem}\label{k3}
For each integer $r\ge 2$, there exists an $2r$-dimensional family $\{\Ee _\alpha\}_{\alpha \in \Gamma}$ of indecomposable aCM vector bundles of rank $r$ on $X$ with $\det (\Ee _\alpha )\cong \Oo _X$ and $c_2(\Ee _\alpha )=r$ such that for each $\alpha \in \Gamma$ there are only finitely many $\beta \in \Gamma$ with $\Ee _\beta \cong \Ee_\alpha$.
\end{theorem}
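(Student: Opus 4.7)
\emph{Proof Proposal.} The plan is to reduce to Theorem~\ref{k2} for small $r$ and extend to all larger $r$ by induction in steps of two, building rank-$r$ bundles as extensions by the rank-$2$ aCM bundles $\Gg_T$ of Remark~\ref{k0}. For $2 \le r \le \tilde{g}$ the statement is exactly Theorem~\ref{k2}, so one may assume $r > \tilde{g}$ and induct from rank $r-2$ to rank $r$.

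Fix an indecomposable aCM bundle $\Ee'$ of rank $r-2$ from the inductive family, with $\det(\Ee') \cong \Oo_X$ and $c_2(\Ee') = r-2$. I would choose a general $T \subset X_{\mathrm{reg}}$ with $|T| = 2$, form the rank-$2$ aCM bundle $\Gg_T$ of Remark~\ref{k0}, pick a general $\xi \in \Ext^1_X(\Gg_T, \Ee')$ and let $\Ee$ be the corresponding extension
\[
0 \to \Ee' \to \Ee \to \Gg_T \to 0.
\]
Then $\Ee$ is locally free of rank $r$ with $\det(\Ee) \cong \Oo_X$ and $c_2(\Ee) = c_2(\Ee')+c_2(\Gg_T) = r$; and since $\Ee'$ and $\Gg_T$ are aCM, the long exact sequence in cohomology combined with $H^1(\Ee'(t)) = H^1(\Gg_T(t)) = 0$ for all $t \in \ZZ$ forces $H^1(\Ee(t)) = 0$, so $\Ee$ is aCM.

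For indecomposability, suppose $\Ee \cong \Ff_1 \oplus \Ff_2$ nontrivially and compose each inclusion $\Ff_i \hookrightarrow \Ee$ with the surjection $\Ee \to \Gg_T$. The indecomposability of $\Gg_T$ forces one of these composites, say from $\Ff_1$, to vanish, so $\Ff_1 \subseteq \Ee'$. A diagram chase using the projection $\Ee \to \Ff_1$ then exhibits $\Ee' = \Ff_1 \oplus (\Ee' \cap \Ff_2)$, and inductive indecomposability of $\Ee'$ gives $\Ff_1 = 0$ or $\Ff_1 = \Ee'$. In the latter case $\Ff_2 \hookrightarrow \Ee$ splits $\Ee \to \Gg_T$, contradicting the generality (and in particular the nonzeroness) of $\xi$. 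Parametrizing by $(\Ee',T,[\xi])$ produces an irreducible family of dimension $2(r-2)+4 = 2r$, and the map to isomorphism classes of $\Ee$ should be quasi-finite.

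The main obstacle will be to rigorously establish this last quasi-finiteness: one must verify that $\Ext^1_X(\Gg_T, \Ee') \neq 0$ for general $\Ee'$ and $T$ (probably via Serre duality and the aCM vanishings) and bound $\Hom_X(\Gg_T, \Ee')$ together with $\Aut(\Ee)$ in order to rule out positive-dimensional fibres of the parameter map, perhaps by recovering $T$ and $\Ee'$ from $\Ee$ through the cokernel of its evaluation map (as in the proof of Theorem~\ref{k2}). A minor additional issue is the odd-rank base case when $\tilde{g} = 2$, where Theorem~\ref{k2} supplies no rank-$3$ starting bundle and so a direct ad hoc construction of an indecomposable rank-$3$ aCM bundle with $c_2 = 3$ would be required to launch the induction on the odd side.
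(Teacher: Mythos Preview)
Your inductive strategy is genuinely different from the paper's, but the indecomposability step contains a real error. From a putative splitting $\Ee \cong \Ff_1 \oplus \Ff_2$ you look at the composites $\Ff_i \hookrightarrow \Ee \twoheadrightarrow \Gg_T$ and assert that ``the indecomposability of $\Gg_T$ forces one of these composites, say from $\Ff_1$, to vanish.'' That implication is false: the images of $\Ff_1$ and $\Ff_2$ in $\Gg_T$ need only \emph{sum} to $\Gg_T$, not split it. The standard counterexample already lives on $\PP^1$: the non-split Euler-type sequence $0 \to \Oo(-1) \to \Oo \oplus \Oo \to \Oo(1) \to 0$ has indecomposable kernel and indecomposable cokernel, yet the middle term decomposes and both summands map nontrivially to $\Oo(1)$. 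So a non-trivial extension of an indecomposable by an indecomposable can perfectly well split, and nothing in your argument rules this out.

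This is precisely the difficulty the paper's proof is engineered to avoid. Rather than extending vector bundles by vector bundles, the paper builds $\Ee$ as an extension
\[
0 \to \Oo_X^{\oplus m} \to \Ee \to \Jj \to 0
\]
where $\Jj$ is an \emph{indecomposable torsion-free non-locally-free sheaf} obtained as an iterated extension of ideal sheaves $\Ii_{S_i,X}$ (Lemmas~\ref{n2}--\ref{n5}). The point is that $\Oo_X^{\oplus m}$ is then intrinsically the image of the evaluation map $H^0(\Ee)\otimes\Oo_X \to \Ee$ and $\Jj$ is its cokernel; since both the global-section functor and the evaluation map commute with direct sums, any splitting of $\Ee$ induces a splitting of $\Jj$, which is impossible. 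The same mechanism simultaneously handles your second acknowledged obstacle (quasi-finiteness): the cokernel $\Jj$ satisfies $\Jj^{\vee\vee}/\Jj \cong \Oo_{S_1\cup\cdots\cup S_m}$, so the underlying set of points is recovered from $\Ee$. In your inductive picture, by contrast, once $\Ee'$ is itself a non-trivial extension there is no evident intrinsic way to read off $T$ or $\Ee'$ from $\Ee$, and you give no substitute. To salvage your approach you would need an independent invariant of $\Ee$ that behaves well under direct sums and distinguishes the parameters; absent that, the indecomposability and the finite-to-one claim both remain open.
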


For the proof of Theorem \ref{k3} we collect numerous technical results below. We fix subsets $S_0, \dots, S_m\subset X_{\mathrm{reg}}$ with $|S_0|=3$ and $|S_i|=2$ for all $1\le i \le m$ such that $S_i \cap S_j=\emptyset$ for any $i\ne j$.

Set $\II (S_1):= \{\Ii _{S_1,X}\}$ and define $\II (S_1, \dots, S_i)$ for $i\ge 2$ inductively to be the set of all sheaves admitting an extension of $\Ii _{S_i,X}$ by an element in $\II (S_1,\dots ,S_{i-1})$. Thus for each $i\ge 2$ each sheaf $\Jj \in \II (S_1,\dots ,S_i)$ admits the following exact sequence for some $\Jj' \in \II (S_1,\dots ,S_{i-1})$
\begin{equation}\label{yyy}
0 \to \Jj' \to \Jj \to \Ii _{S_i,X}\to 0.
\end{equation}
For a subset $N=\{i_1, \ldots, i_k\} \subset \{1,\ldots, i\}$ with $i_1<\ldots<i_k$, we denote $\II(S_{i_1}, \ldots, S_{i_k})$ by $\II(S_j; j\in N)$.

Set $\JJ(\emptyset;S_0):= \{\Ii_{S_0,X}\}$ and define $\JJ(S_1, \dots, S_i; S_0)$ to be the set of all isomorphism classes of extensions of $\Ii_{S_0, X}$ by an element in $\II(S_1, \ldots, S_i)$. Similarly we define $\JJ (S_j; j\in N; S_0)$.

\begin{lemma}\label{n2}
Each sheaf $\Jj \in \II(S_1, \ldots, S_i)$ admits an exact sequence
\begin{equation}\label{ttt}
0\to \Jj  \stackrel{\iota}{\to} \Jj^{\vee\vee}\cong \Oo_X^{\oplus i} \to\Oo_{S_1\cup \dots \cup S_i} \to 0,
\end{equation}
where the map $\iota$ is the double dual map. In particular, we have $h^0(\Jj)=0$ and $h^1(\Jj)=h^2(\Jj)=i$.
\end{lemma}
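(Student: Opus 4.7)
The plan is to induct on $i$, using the previous case together with a pushout construction that replaces the subsheaf $\Jj'$ by its (inductively known) double dual.

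For the base case $i=1$, we have $\Jj = \Ii_{S_1,X}$, and (\ref{ttt}) is the standard structure sequence $0 \to \Ii_{S_1,X} \to \Oo_X \to \Oo_{S_1} \to 0$; here $(\Ii_{S_1,X})^{\vee\vee} \cong \Oo_X$ because $S_1 \subset X_{\mathrm{reg}}$ has codimension two and the reflexive hull of the ideal of a codimension-two subscheme in a smooth locus is the structure sheaf. The cohomology bookkeeping then follows from $h^0(\Oo_X)=1$, $h^1(\Oo_X)=0$ (since $q(X)=0$), $h^2(\Oo_X)=h^0(\omega_X)=1$ (since $\omega_X\cong\Oo_X$), and $h^0(\Oo_{S_1})=|S_1|$.

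For the inductive step, suppose the assertion for $i-1$, and let $\Jj$ fit into (\ref{yyy}) for some $\Jj' \in \II(S_1,\dots,S_{i-1})$. Applying the inductive sequence $0\to\Jj'\to\Oo_X^{\oplus(i-1)}\to\Oo_T\to 0$ (with $T:=S_1\cup\dots\cup S_{i-1}$), I would form the pushout $\wt\Jj:=\Jj\sqcup_{\Jj'}\Oo_X^{\oplus(i-1)}$, which gives a commutative diagram with exact rows whose middle row reads
$$0\to \Oo_X^{\oplus(i-1)} \to \wt\Jj \to \Ii_{S_i,X}\to 0, \qquad (\spadesuit)$$
together with an inclusion $\Jj\hookrightarrow\wt\Jj$ whose cokernel is $\Oo_T$. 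A local check around each $p\in S_i$ (where $(\spadesuit)$ splits on a small affine neighborhood because $\mathrm{Ext}^1$ vanishes locally) shows that $\wt\Jj\cong\Oo^{\oplus(i-1)}\oplus\Ii_{\{p\},X}$ near $p$ and is locally free of rank $i$ away from $S_i$; hence $\wt\Jj^{\vee\vee}$ is locally free of rank $i$ with $\wt\Jj^{\vee\vee}/\wt\Jj\cong\Oo_{S_i}$. Since the cokernel of $\Jj\hookrightarrow\wt\Jj$ is torsion supported in codimension two, $\Jj^{\vee\vee}=\wt\Jj^{\vee\vee}$.

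To identify $\wt\Jj^{\vee\vee}$ as $\Oo_X^{\oplus i}$, I would apply the snake lemma to the inclusion $\wt\Jj\hookrightarrow\wt\Jj^{\vee\vee}$ laid against $(\spadesuit)$, obtaining
$$0\to \Oo_X^{\oplus(i-1)} \to \wt\Jj^{\vee\vee} \to \Cc\to 0,$$
where $\Cc$ is a line bundle sitting in $0\to \Ii_{S_i,X}\to\Cc\to\Oo_{S_i}\to 0$. A Chern class computation gives $c_1(\Cc)=0$, and the hypothesis $q(X)=0$ plays the key dual role: it forces $\mathrm{Pic}^0(X)=0$, so $\Cc\cong\Oo_X$, and it also yields $\mathrm{Ext}^1(\Oo_X,\Oo_X^{\oplus(i-1)})=H^1(\Oo_X)^{\oplus(i-1)}=0$, so the extension splits and $\Jj^{\vee\vee}=\wt\Jj^{\vee\vee}\cong\Oo_X^{\oplus i}$. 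The full cokernel of $\iota:\Jj\hookrightarrow\Jj^{\vee\vee}$ then sits in an extension of $\Oo_{S_i}$ by $\Oo_T$ on disjoint supports, hence equals $\Oo_{S_1\cup\dots\cup S_i}$, establishing (\ref{ttt}).

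Finally, the cohomology claims are read off from the long exact sequence of (\ref{ttt}): using $h^0(\Jj)=0$ (proved inductively from (\ref{yyy}), noting $H^0(\Ii_{S_i,X})=0$), together with $H^1(\Oo_X)=0$, $H^2(\Oo_X)\cong \mathbf{k}$, and vanishing of higher cohomology of the zero-dimensional $\Oo_{S_1\cup\dots\cup S_i}$, one gets $h^1(\Jj)=h^2(\Jj)=i$. The main obstacle is the middle step: showing that $\wt\Jj^{\vee\vee}$ is not merely a rank-$i$ bundle with trivial determinant but in fact trivial, which is where the assumption $q(X)=0$ is essential in trivializing both $\mathrm{Pic}^0(X)$ and the relevant $\mathrm{Ext}^1$.
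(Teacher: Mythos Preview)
Your inductive pushout strategy is the same as the paper's, but there is a real gap in the local step. The claim that $(\spadesuit)$ splits on an affine neighborhood of $p\in S_i$ ``because $\mathrm{Ext}^1$ vanishes locally'' is false: for a smooth surface point $p$ one has $\mathcal{E}xt^1(\Ii_{p},\Oo_X)\cong \mathbf{k}_p$, so on any affine open $U\ni p$ the group $\mathrm{Ext}^1_U(\Ii_{S_i}|_U,\Oo_U)\cong H^0(U,\mathcal{E}xt^1)$ is nonzero. Without the local splitting you cannot conclude that $\wt\Jj\cong\Oo^{\oplus(i-1)}\oplus\Ii_p$ near $p$, hence neither $\wt\Jj^{\vee\vee}/\wt\Jj\cong\Oo_{S_i}$ nor that $\Cc$ is a line bundle is justified; the snake-lemma identification of $\Cc$ then hangs in the air.

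The paper sidesteps this entirely by performing a \emph{second} pushout, along $\Ii_{S_i,X}\hookrightarrow\Oo_X$, to complete the $3\times 3$ diagram. The middle term $\Ee$ of the resulting middle row $0\to\Oo_X^{\oplus(i-1)}\to\Ee\to\Oo_X\to 0$ is then $\Oo_X^{\oplus i}$ solely because $\mathrm{Ext}^1(\Oo_X,\Oo_X)=H^1(\Oo_X)=0$; no local analysis and no appeal to $\mathrm{Pic}^0(X)=0$ is needed. From the middle column $0\to\Jj\to\Ee\to Q\to 0$ with $Q$ an extension of $\Oo_{S_i}$ by $\Oo_T$ on disjoint supports (so $Q\cong\Oo_{S_1\cup\dots\cup S_i}$), one identifies $\Ee$ with $\Jj^{\vee\vee}$ a posteriori: $\Ee$ is locally free and contains $\Jj$ with codimension-two torsion cokernel. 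Your argument is easily repaired along these lines; note also that once you have $\Cc$ locally free containing $\Ii_{S_i,X}$ with codimension-two cokernel, you get $\Cc=(\Ii_{S_i,X})^{\vee\vee}=\Oo_X$ immediately, so the $\mathrm{Pic}^0$ detour is unnecessary in any case.
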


\begin{proof}
The assertion is clear for $i=1$, i.e. $\Jj=\Ii_{S_1,X}$. Assume $i\ge 2$ and consider an exact sequence (\ref{yyy}) with $\Jj' \in \II(S_1, \ldots, S_{i-1})$. By inductive hypothesis, the assertion holds for $\Jj'$ and $\Ii_{S_i,X}$ and we get the following commutative diagram:
$$\begin{array}{ccccccc}
&0 & &0& &0 & \\
& \downarrow& &\downarrow & &\downarrow &\\
0\to &  \Jj'        &\to  &\Jj  &  \to  &\Ii_{S_i,X}  &\to 0 \\
& \downarrow& &\downarrow & &\downarrow & \\
0 \to &\Oo_X^{\oplus (i-1)}&\to & \Jj^{\vee\vee} &\to & \Oo_X& \to 0\\
&\downarrow& &\downarrow & &\downarrow &\\
0 \to &\Oo_{S_1\cup \dots \cup S_{i-1}} & \to &  \Jj^{\vee\vee}/\Jj &\to & \Oo_{S_i} &  \to 0\\
&\downarrow& &\downarrow & & \downarrow& \\
&0 &          &0& & 0& \\
\end{array}$$
Since $\mathrm{ext}_X^1(\Oo_X, \Oo_X)=h^1(\Oo_X)=0$, we get $\Jj^{\vee\vee} \cong \Oo_X^{\oplus i}$ from the second horizontal sequence. From the third horizontal sequence, we get $\Jj^{\vee\vee}/\Jj \cong \Oo_{S_1 \cup \dots \cup S_i}$, because $S_i$'s are disjoint to each other. Then we get the exact sequence (\ref{ttt}). The vanishing $H^0(\Jj)=0$ can be obtained by induction on $i$ and $h^1(\Jj)=h^2(\Jj)=i$ can be obtained from (\ref{ttt}).
\end{proof}

\begin{remark}\label{en2}
By the same argument in the proof of Lemma \ref{n2}, we have an exact sequence
$$0\to \tilde{\Jj} \to \tilde{\Jj}^{\vee\vee} \cong \Oo_X^{\oplus (i+1)} \to \Oo_{S_0\cup S_1 \cup \dots \cup S_i} \to 0,$$
for $\tilde{\Jj}\in \JJ(S_1, \dots, S_i; S_0)$. This gives $h^0(\tilde{\Jj})=0$, $h^1(\tilde{\Jj})=i+2$ and $h^2(\tilde{\Jj})=i+1$.
\end{remark}

\begin{lemma}\label{k00}
For a sheaf $\Jj \in \II (S_1,\dots ,S_i)$ and any finite subset $A\subset X$,
\begin{itemize}
\item [(i)] if $A\nsubseteq S_j$ for all $1\le j \le i$, then we have $\Hom_X (\Jj, \Ii _{A,X})=0$;
\item [(ii)] if $A\nsupseteq S_j$ for some $1\le j \le i$, then we have $\Hom_X (\Ii _{A,X},\Jj )=0$.
\end{itemize}
\end{lemma}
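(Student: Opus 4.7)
I would prove both parts simultaneously by induction on $i$, using the defining short exact sequence
$$0 \to \Jj' \to \Jj \to \Ii_{S_i,X} \to 0$$
for $\Jj \in \II(S_1,\dots,S_i)$, with $\Jj' \in \II(S_1,\dots,S_{i-1})$. The induction reduces both parts to the case $i=1$, which is a direct computation of $\Hom$ between ideal sheaves of finite subschemes on the integral surface $X$.

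\textbf{Base case $i=1$.} Here $\Jj=\Ii_{S_1,X}$, so both parts concern $\Hom_X(\Ii_{B,X},\Ii_{C,X})$ for ideals of finite sets $B,C\subset X_{\mathrm{reg}}$. Since $X$ is an integral Cohen-Macaulay projective surface with $H^0(\Oo_X)=\mathbf{k}$, the sheaf $\Oo_X$ is reflexive and $\Ii_{B,X}^{\vee\vee}\cong\Oo_X$, so
$$\Hom_X(\Ii_{B,X},\Oo_X) \cong \Hom_X(\Oo_X,\Oo_X) = \mathbf{k}.$$
Any morphism $\Ii_{B,X}\to\Ii_{C,X}$ is therefore multiplication by a scalar $\lambda$ followed by the inclusion $\Ii_{C,X}\hookrightarrow\Oo_X$, and its image lies in $\Ii_{C,X}$ iff $\lambda=0$ or $C\subseteq B$. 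Specializing $(B,C)=(S_1,A)$ yields (i) when $A\nsubseteq S_1$, and $(B,C)=(A,S_1)$ yields (ii) when $A\nsupseteq S_1$.

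\textbf{Inductive step and main obstacle.} For $i\ge 2$, apply $\Hom_X(-,\Ii_{A,X})$ to the defining sequence to obtain the left-exact sequence
$$0 \to \Hom_X(\Ii_{S_i,X},\Ii_{A,X}) \to \Hom_X(\Jj,\Ii_{A,X}) \to \Hom_X(\Jj',\Ii_{A,X}).$$
Under the hypothesis of (i), the leftmost term vanishes by the base case (applied to the index $j=i$), and the rightmost vanishes by the inductive hypothesis applied to $\Jj'\in\II(S_1,\dots,S_{i-1})$; hence $\Hom_X(\Jj,\Ii_{A,X})=0$. For (ii) one applies $\Hom_X(\Ii_{A,X},-)$ instead and argues symmetrically, with the base case killing $\Hom_X(\Ii_{A,X},\Ii_{S_i,X})$ and induction killing $\Hom_X(\Ii_{A,X},\Jj')$. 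The delicate point of the proof is the base-case identification: one must justify via reflexivity that the only morphisms between ideal sheaves of points are scalar multiplications extended from $\Oo_X$. Once this structural fact is secured, the inductive steps are routine long-exact-sequence arguments, and care need only be taken to verify that the non-containment hypotheses on $A$ persist through the reduction from $\Jj$ to $\Jj'$.
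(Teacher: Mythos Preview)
Your argument for part (i) is correct and matches the paper's: both induct on $i$ via the defining extension $0\to\Jj'\to\Jj\to\Ii_{S_i,X}\to 0$, killing the outer $\Hom$-groups by the base case (applied to $S_i$) and the inductive hypothesis (applied to $\Jj'$). Your reflexivity discussion of the base case is a welcome elaboration of what the paper states without proof.

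Part (ii), however, does not go through ``symmetrically''. The hypothesis in (ii) is existential---$A\nsupseteq S_j$ for \emph{some} $j$---and it does not propagate to both pieces of the exact sequence at once: if the witnessing index is $j=i$ you can kill $\Hom_X(\Ii_{A,X},\Ii_{S_i,X})$ but have no hypothesis left to feed the induction on $\Jj'$; if $j<i$ you can invoke induction on $\Jj'$ but cannot kill $\Hom_X(\Ii_{A,X},\Ii_{S_i,X})$. Your closing remark that one need only check that ``the non-containment hypotheses on $A$ persist through the reduction'' is precisely the step that fails.

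In fact, (ii) as written is false. Take $i=2$, any $\Jj\in\II(S_1,S_2)$, and $A=S_1$: since $S_1\cap S_2=\emptyset$ we have $A\nsupseteq S_2$, yet the inclusion $\Ii_{S_1,X}\hookrightarrow\Jj$ coming from the defining extension is a nonzero element of $\Hom_X(\Ii_{A,X},\Jj)$. The paper's proof only asserts that (ii) follows ``similarly'' and so inherits this gap. What the paper actually uses later is the special case where $A$ is one of the $S_k$'s with $k$ outside the index set of $\Jj$, so that by disjointness $A\nsupseteq S_j$ holds for \emph{all} $j$; under that universal hypothesis your inductive argument for (ii) is valid.
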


\begin{proof}
We only prove part (i), because part (ii) can be obtained similarly. Let us use induction on $i$; the case $i=1$ is true, because $A\nsubseteq S_1$ is equivalent to $\Hom_X (\Ii _{S_1,X},\Ii _{A,X})=0$. Now assume $i\ge 2$ and consider the sequence (\ref{yyy}) with $\Jj \in \II(S_1, \dots, S_{i-1})$. Since $\Hom_X (\Ii _{S_i,X},\Ii _{A,X})=0$, any map $f\in \Hom_X (\Jj, \Ii _{A,X})$ is uniquely determined by $f'\in \Hom_X (\Jj' ,\Ii _{A,X})$. The inductive assumption gives $f'=0$ and so we have $f=0$.
\end{proof}

\begin{lemma}\label{n4}
We have $\mathrm{ext}_X^1( \Ii_{S_{i+1}, X}, \Jj)=2i$ for $\Jj \in \II(S_1, \dots, S_i)$.
\end{lemma}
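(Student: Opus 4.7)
The plan is to combine the double-dual sequence of Lemma \ref{n2} with Serre duality (available on the Gorenstein surface $X$ thanks to $\omega_X\cong\Oo_X$) and the $\Hom$-vanishings from Lemma \ref{k00}, so as to reduce the problem to an Euler characteristic computation.

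\emph{Step 1 (vanishings at degrees $0$ and $2$).} Lemma \ref{k00}(ii) with $A=S_{i+1}$ gives $\Hom_X(\Ii_{S_{i+1},X},\Jj)=0$, since $S_{i+1}\cap S_j=\emptyset$ and $S_j\neq\emptyset$ force $S_{i+1}\nsupseteq S_j$ for every $j\le i$. Serre duality then yields $\Ext_X^2(\Ii_{S_{i+1},X},\Jj)\cong\Hom_X(\Jj,\Ii_{S_{i+1},X})^\vee$, and Lemma \ref{k00}(i) (applied to the same $A=S_{i+1}$, which is disjoint from each $S_j$ and so is contained in none of them) forces the right-hand side to vanish as well. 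Consequently $\mathrm{ext}_X^1(\Ii_{S_{i+1},X},\Jj)=-\chi(\Ii_{S_{i+1},X},\Jj)$.

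\emph{Step 2 (Euler characteristic).} Using the ideal sheaf sequence $0\to\Ii_{S_{i+1},X}\to\Oo_X\to\Oo_{S_{i+1}}\to 0$ we obtain
$$\chi(\Ii_{S_{i+1},X},\Jj)=\chi(\Jj)-\chi(\Oo_{S_{i+1}},\Jj).$$
Lemma \ref{n2} gives $\chi(\Jj)=0-i+i=0$. On the other hand, since $S_{i+1}\subset X_{\mathrm{reg}}$ and $S_{i+1}\cap(S_1\cup\cdots\cup S_i)=\emptyset$, the sequence (\ref{ttt}) restricts to $\Jj\cong\Oo_X^{\oplus i}$ on a neighborhood of $S_{i+1}$, so $\Jj$ is locally free of rank $i$ there. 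The local Koszul resolution of $\Oo_p$ at a smooth surface point $p$ yields $\mathcal{E}xt^q(\Oo_{S_{i+1}},\Oo_X)=0$ for $q\neq 2$ and $\mathcal{E}xt^2(\Oo_{S_{i+1}},\Oo_X)\cong\Oo_{S_{i+1}}$, whence (via the local-to-global spectral sequence, which degenerates because these sheaves are skyscrapers) $\Ext_X^q(\Oo_{S_{i+1}},\Jj)=0$ for $q\neq 2$ and $\Ext_X^2(\Oo_{S_{i+1}},\Jj)$ has dimension $|S_{i+1}|\cdot i=2i$. Hence $\chi(\Oo_{S_{i+1}},\Jj)=2i$ and $\chi(\Ii_{S_{i+1},X},\Jj)=-2i$, giving $\mathrm{ext}_X^1(\Ii_{S_{i+1},X},\Jj)=2i$.

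The main subtlety is the possible singularity of $X$: the hypothesis $\omega_X\cong\Oo_X$ makes $X$ Gorenstein with dualizing sheaf $\Oo_X$, so Serre duality applies in the form used above, and the local freeness argument is valid because $S_{i+1}$ lies in $X_{\mathrm{reg}}$. An alternative route, worth recording, is to apply $\Hom_X(\Ii_{S_{i+1},X},-)$ directly to the sequence (\ref{ttt}): the induced map $k^i\to k^{2i}$ decomposes as a direct sum of diagonal maps $k\to k^2$, hence is injective with cokernel $k^i$, and the vanishings $\Ext_X^q(\Ii_{S_{i+1},X},\Oo_{S_1\cup\cdots\cup S_i})=0$ for $q\ge 1$ (disjoint supports) combined with $\mathrm{ext}_X^1(\Ii_{S_{i+1},X},\Oo_X)=h^1(\Ii_{S_{i+1},X})=1$ let one recover $\mathrm{ext}_X^1(\Ii_{S_{i+1},X},\Jj)=i+i=2i$ directly from the long exact sequence.
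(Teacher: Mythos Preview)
Your proof is correct, and your primary argument takes a genuinely different route from the paper. The paper applies $\Hom_X(\Ii_{S_{i+1},X},-)$ directly to the sequence (\ref{ttt}) and chases the resulting long exact sequence term by term: it computes $\mathrm{hom}=\mathrm{ext}^1=i$ against $\Oo_X^{\oplus i}$, proves $\mathrm{ext}^1(\Ii_{S_{i+1},X},\Oo_S)=0$ via a separate short exact sequence, and establishes $\mathrm{hom}(\Ii_{S_{i+1},X},\Jj)=0$ by induction along (\ref{yyy}) rather than by invoking Lemma \ref{k00}. Your approach instead kills the outer degrees using both parts of Lemma \ref{k00} together with Serre duality, and then finishes by an Euler-characteristic count; this is cleaner and more conceptual, at the price of needing duality on a possibly singular $X$, which you handle correctly by observing that $\omega_X\cong\Oo_X$ makes $X$ Gorenstein and that the relevant sheaves are perfect (all non-locally-free behavior is supported in $X_{\mathrm{reg}}$).

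Your ``alternative route'' is essentially the paper's own argument. One small point there: the injectivity of the map $k^i\to k^{2i}$ should be read off from the vanishing $\Hom_X(\Ii_{S_{i+1},X},\Jj)=0$ (equivalently $h^0(\Jj)=0$) rather than from an explicit ``direct sum of diagonal maps'' description, since the surjection $\Oo_X^{\oplus i}\to\Oo_S$ in (\ref{ttt}) arises as the cokernel of the double-dual map and need not split coordinate-wise in the way you suggest.
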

\begin{proof}
Let $S:=S_1 \cup \dots \cup S_i$ and apply the functor $\Hom_X(\Ii_{S_{i+1}, X}, -)$ to the sequence (\ref{ttt}) to obtain
\begin{align*}
0& \to \Hom_X( \Ii_{S_{i+1},X}, \Jj ) \to \Hom_X(\Ii_{S_{i+1}, X}, \Oo_X^{\oplus i}) \to \Hom_X(\Ii_{S_{i+1}, X}, \Oo_S)\\
&\to \Ext_X^1(\Ii_{S_{i+1},X}, \Jj ) \to \Ext_X^1(\Ii_{S_{i+1}, X}, \Oo_X^{\oplus i}) \to \Ext_X^1(\Ii_{S_{i+1}, X}, \Oo_S).
\end{align*}
Here, we have $\mathrm{hom}_X(\Ii_{S_{i+1}, X}, \Oo_X^{\oplus i})=i=\mathrm{ext}_X^1(\Ii_{S_{i+1}, X}, \Oo_X^{\oplus i})$. We also get $\mathrm{hom}_X(\Ii_{S_{i+1}, X}, \Oo_S)=2i$, because $S$ is disjoint from $S_{i+1}$. Now apply the functor $\Hom_X(-, \Oo_S)$ to the standard exact sequence for $S_{i+1}\subset X$ to obtain
$$\Ext_X^1(\Oo_X, \Oo_S) \to \Ext_X^1(\Ii_{S_{i+1}, X}, \Oo_S) \to \Ext_X^2 (\Oo_{S_{i+1}}, \Oo_S) .$$
Here, we have $\mathrm{ext}_X^1(\Oo_X, \Oo_S)=h^1(\Oo_S)=0$ and $\mathrm{ext}_X^2(\Oo_{S_{i+1}}, \Oo_S)=0$. In particular, we get $\mathrm{ext}_X^1(\Ii_{S_{i+1}, X}, \Oo_S)=0$. Finally, apply the functor $\Hom_X(\Ii_{S_{i+1}, X}, -)$ to the sequence (\ref{yyy}) to have
$$\Hom_X(\Ii_{S_{i+1}, X}, \Jj' ) \to \Hom_X(\Ii_{S_{i+1}, X}, \Jj) \to \Hom_X(\Ii_{S_{i+1}, X}, \Ii_{S_i, X}). $$
Since $S_i \cap S_{i+1}=\emptyset$, we get $\mathrm{hom}_X(\Ii_{S_{i+1}, X}, \Ii_{S_i, X})=0$. By inductive hypothesis, we get $\mathrm{hom}_X(\Ii_{S_{i+1}, X}, \Jj')=0$. Thus we have $\mathrm{hom}_X(\Ii_{S_{i+1}, X}, \Jj)=0$ and we get the assertion.
\end{proof}

\begin{remark}\label{ess}
Similarly as in the proof of Lemma \ref{n4}, we see that $\mathrm{ext}_X^1(\Ii_{S_0, X}, \Jj)=3i$ for any $\Jj \in \II(S_1, \dots, S_i)$. In particular, there exists a non-trivial extension
$$0\to \Jj \to \tilde{\Jj} \to \Ii_{S_0, X} \to 0.$$
In this case, we have $\mathrm{ext}_X^1(\Ii_{S_0, X}, \Oo_X^{\oplus i})=2i$ and the other numeric data in the proof of Lemma \ref{n4} are all same.
\end{remark}


\begin{lemma}\label{n3}
For each $i\ge 1$, there exists an indecomposable sheaf $\Jj \in \II (S_1,\dots ,S_i)$.
\end{lemma}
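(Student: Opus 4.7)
The plan is to argue by induction on $i$, producing an indecomposable $\Jj_i \in \II(S_1,\dots,S_i)$ from an indecomposable $\Jj_{i-1} \in \II(S_1,\dots,S_{i-1})$ by means of a non-split extension. The base case $i=1$ is immediate: the unique element $\Ii_{S_1,X}$ of $\II(S_1)$ is a rank-one torsion-free sheaf on the integral surface $X$, and any direct summand of such a sheaf is either zero or the full sheaf.

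For the inductive step, Lemma \ref{n4} (applied with $i-1$ in place of $i$) gives $\mathrm{ext}_X^1(\Ii_{S_i,X},\Jj_{i-1}) = 2(i-1) > 0$ when $i\geq 2$, so I may pick a non-split extension
\[
0 \to \Jj_{i-1} \to \Jj_i \to \Ii_{S_i,X} \to 0
\]
and declare $\Jj_i$ to be its middle term, which belongs to $\II(S_1,\dots,S_i)$ by construction. Suppose $\Jj_i \cong \Aa \oplus \Bb$. Applying $\Hom_X(-,\Ii_{S_i,X})$ to the extension, Lemma \ref{k00}(i) (applicable since the $S_j$ are pairwise disjoint, so $S_i \not\subseteq S_j$ for each $j\leq i-1$) yields $\Hom_X(\Jj_{i-1},\Ii_{S_i,X})=0$, while $\Hom_X(\Ii_{S_i,X},\Ii_{S_i,X})=\mathbf{k}$ by torsion-freeness of a rank-one sheaf on the integral $X$. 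Hence $\Hom_X(\Jj_i,\Ii_{S_i,X})=\mathbf{k}$, so exactly one summand, say $\Bb$, maps to zero under $\Jj_i \twoheadrightarrow \Ii_{S_i,X}$; in particular $\Bb \subseteq \Jj_{i-1}$.

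The key step is then the following splitting observation: the composition $\Bb \hookrightarrow \Jj_{i-1} \hookrightarrow \Jj_i \twoheadrightarrow \Bb$, with final arrow the projection onto $\Bb$ along $\Aa$, is the identity on $\Bb$, so $\Bb$ is already a direct summand of $\Jj_{i-1}$. Indecomposability of $\Jj_{i-1}$ now forces either $\Bb=0$, giving $\Jj_i \cong \Aa$ indecomposable, or $\Bb=\Jj_{i-1}$, in which case $\Aa \cong \Jj_i/\Jj_{i-1} \cong \Ii_{S_i,X}$ and the extension splits, contradicting our choice. The subtle point is precisely this last move: the Ext and Hom calculations are packaged in Lemmas \ref{n2}, \ref{n4}, and \ref{k00}, but the crux is to notice that the inclusion $\Bb\subseteq \Jj_{i-1}$ combined with $\Bb$ being a summand of $\Jj_i$ actually promotes $\Bb$ to a summand of the strictly smaller sheaf $\Jj_{i-1}$, which is what lets the inductive hypothesis do the real work.
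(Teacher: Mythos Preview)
Your proof is correct, but it follows a genuinely different route from the paper's argument. The paper does not proceed by iterated non-split extensions of an already-indecomposable $\Jj_{i-1}$; instead it takes a \emph{general} extension
\[
0 \to \bigoplus_{j=1}^{i-1} \Ii_{S_j,X} \to \Jj \to \Ii_{S_i,X} \to 0,
\]
observes that every element of $\II(S_1,\dots,S_i)$ is strictly semistable with Jordan--H\"older grading $\bigoplus_j \Ii_{S_j,X}$, and then analyses a putative decomposition $\Jj \cong \Aa_1\oplus\cdots\oplus\Aa_h$ by tracking the partition of $\{1,\dots,i\}$ induced on the graded pieces, using Lemma~\ref{k00} and the genericity of the extension class to reach a contradiction.

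Your inductive argument is more elementary: it avoids any appeal to semistability or Jordan--H\"older filtrations, and the key step---that a direct summand $\Bb$ killed by the quotient map to $\Ii_{S_i,X}$ is already a summand of the subsheaf $\Jj_{i-1}$---is a clean splitting trick. The paper's approach, by contrast, produces the indecomposable $\Jj$ directly as a general point of a single extension space, which dovetails with the later constructions (Lemma~\ref{n5}, proof of Theorem~\ref{k3}) where one works with universal families over affine parameter spaces and uses semicontinuity. Either argument suffices for the bare existence statement of Lemma~\ref{n3}; yours is shorter, while the paper's fits more naturally into the surrounding machinery. One cosmetic point: you should begin the decomposition argument with ``suppose $\Jj_i \cong \Aa\oplus\Bb$ with both summands nonzero,'' so that the conclusion $\Bb=0$ is visibly a contradiction rather than the somewhat circular ``giving $\Jj_i\cong\Aa$ indecomposable.''
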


\begin{proof}
Since $\Ii _{S_1,X}$ has rank one and $X$ is an integral variety, $\Ii _{S_1,X}$ is indecomposable. Thus we may assume $i\ge 2$. Note that each $\Ii_{S_j, X}$ has the same Hilbert polynomial with respect to any polarization $\Oo_X(1)$. Thus any sheaf in $\II (S_1, \ldots, S_i)$ is strictly semistable with $\oplus_{j=1}^i \Ii_{S_j, X}$ as its Jordan-H\"older grading. Let $\Jj$ be a general sheaf fitting into an exact sequence
\begin{equation}\label{eqan11}
0\to \oplus_{j=1}^{i-1} \Ii_{S_j, X} \stackrel{f}{\to} \Jj \stackrel{g}{\to} \Ii_{S_i, X} \to 0
\end{equation}
and assume that $\Jj$ is decomposable, say $\Jj \cong \Aa_1\oplus \cdots \oplus \Aa_h$ with $h\ge 2$ and each $\Aa_j$ indecomposable. Since $\Jj$ is strictly semistable with $gr(\Jj)\cong \oplus_{j=1}^i \Ii_{S_j, X}$, there is a subset $N_j \subset \{1,\dots, i\}$ for each $j\in \{1,\ldots, h\}$ such that $gr(\Aa_j) \cong \oplus_{k\in N_j}\Ii_{S_k, X}$. Note that $\{N_j|1\le j \le h\}$ forms a partition of $\{1,\ldots, i\}$ with each $N_j$ non-empty.

Assume first that $|N_j|=1$ for all $j$. Then we have $\Jj \cong \oplus_{j=1}^i \Ii_{S_j, X}$. Since we have $\Hom_X(\Ii_{S_i, X}, \Ii_{S_j, X})=0$ for all $j<i$ and $\Hom_X(\Ii_{S_i, X}, \Ii_{S_i, X}) \cong \mathbf{k}$, we get that the sequence (\ref{eqan11}) splits, contradicting Lemma \ref{n4}.

Now without loss of generality, assume $e:=|N_1|\ge 2$. If $i\notin N_1$, then by permuting the first $i-1$ indices of $S_j$'s we may assume $\Aa_1 \in \II(S_1, \ldots, S_e)$. Then by Lemma \ref{k00} we have $\mathrm{hom}_X(\Ii_{S_j, X}, \Aa_1)=\mathrm{hom}_X(\Aa_1, \Ii_{S_i, X})=0$ for all $j \ge e+1$. Thus $f$ induces an isomorphism $f' : \Aa_1 \rightarrow \oplus_{j=1}^e \Ii_{S_j, X}$, contradicting the assumption $e\ge 2$ and the indecomposability of $\Aa_1$. If $i\in N_1$, then by permuting the first $i-1$ indices of $S_j$'s we may assume $\Aa_1 \in \II(S_{i-e+1}, \dots, S_i)$. From the case when $i\notin N_1$ we may also assume $|N_j|=1$ for all $j>1$, and this implies $\Jj \cong \Aa_1\oplus (\oplus_{j=1}^{i-e} \Ii_{S_j, X})$. Then by Lemma \ref{k00} we have $\Hom_X(\Ii_{S_j, X}, \Aa_1)=0$ for all $j\le i-e$. In particular, the extension class $\epsilon = (\epsilon _1,\dots ,\epsilon _{i-1})$ corresponding to (\ref{eqan11}) with $\epsilon _j\in \Ext_X^1(\Ii _{S_i,X},\Ii _{S_j,X})$ satisfies $\epsilon _j=0$ for all $j\le i-e$, contradicting Lemma \ref{n4} and the generality of $\epsilon$.
\end{proof}

\begin{remark}\label{ees1}
As in the proof of Lemma \ref{n3}, let us consider a general sheaf $\tilde{\Jj}$ fitting into an exact sequence
\begin{equation}\label{ttr1}
0\to \oplus_{j=1}^i \Ii_{S_j, X} \to \tilde{\Jj} \to \Ii_{S_0, X} \to 0.
\end{equation}
By Remark \ref{ess} the extension (\ref{ttr1}) is non-trivial. Here, $\tilde{\Jj} \in \JJ(S_1, \dots, S_i; S_0)$ and the sequence (\ref{ttr1}) is the Harder-Narasimhan filtration of $\tilde{\Jj}$. Assume that $\tilde{\Jj}$ is decomposable, say $\tilde{\Jj} \cong \tilde{\Aa_1} \oplus \cdots \oplus \tilde{\Aa_h}$. Note that the HN filtration of $\tilde{\Jj}$ is obtained from the ones of each $\tilde{\Aa_i}$. In particular, as in the proof of Lemma \ref{n3}, we have a partition $\{ N_j|1\le j \le h \}$ of $\{0,1,\cdots, i\}$ such that $\tilde{\Aa_j}\in \II(S_k ; k\in N_j)$ if $0\notin N_j$, and $\tilde{\Aa_j}\in \JJ(S_k; k\in N_j\setminus \{0\}; S_0)$. Then by the same argument in the proof of Lemma \ref{n3}, we get a contradiction. Thus we get an indecomposable sheaf in $\JJ(S_1, \dots, S_i; S_0)$.
\end{remark}

\begin{lemma}\label{n5}
For each integer $i\ge 1$, the set $\II (S_1,\dots , S_i)$ is parametrized by an affine space $T(S_1,\dots ,S_i)$, not necessarily finite-to-one, equipped with the universal sheaf, i.e. a sheaf $\Ss(S_1, \dots, S_i)$ on $T(S_1,\dots ,S_i)\times X$ such that the fiber of $\Ss(S_1, \dots, S_i)$ over $\{\Jj\}\times X$ with $\Jj\in \II(S_1, \dots, S_i)$ is the sheaf $\Jj$ on $X$.
\end{lemma}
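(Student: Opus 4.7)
The plan is to prove this by induction on $i$, at each step constructing $T_i := T(S_1,\dots,S_i)$ as the geometric total space of a relative $\Ext^1$-bundle over $T_{i-1}$. The base case $i=1$ is immediate: $\II(S_1) = \{\Ii_{S_1,X}\}$ is a singleton, so $T(S_1) := \Spec \mathbf{k}$, the $0$-dimensional affine space, works, with universal sheaf $\Ss(S_1) = \Ii_{S_1,X}$ on $T(S_1)\times X \cong X$.

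For the inductive step, I assume $T_{i-1}$ is an affine space carrying a flat family $\Ss_{i-1}$ on $T_{i-1}\times X$ whose fibers exhaust $\II(S_1,\dots,S_{i-1})$. Every sheaf in $\II(S_1,\dots,S_i)$ is, by definition, the middle term of an extension
$$0\to \Jj'\to \Jj\to \Ii_{S_i,X}\to 0,\qquad \Jj'\in \II(S_1,\dots,S_{i-1}),$$
so the natural candidate for $T_i$ is the total space of the relative $\Ext^1$ of $\Ii_{S_i,X}$ by $\Ss_{i-1}$. Writing $\pi_T$ and $\pi_X$ for the two projections from $T_{i-1}\times X$, I set
$$\Ee_i \;:=\; \mathcal{E}xt^1_{\pi_T}\!\bigl(\pi_X^*\Ii_{S_i,X},\,\Ss_{i-1}\bigr)$$
on $T_{i-1}$. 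By Lemma \ref{n4} the fiber dimension $\mathrm{ext}^1_X(\Ii_{S_i,X},\Ss_{i-1,t}) = 2(i-1)$ is constant in $t$, and its proof also yields the vanishing $\mathrm{hom}_X(\Ii_{S_i,X},\Ss_{i-1,t})=0$. Together with the standard cohomology-and-base-change machinery for relative $\Ext$, these two facts force $\Ee_i$ to be locally free of rank $2(i-1)$, and Quillen--Suslin then makes $\Ee_i$ trivial on the affine space $T_{i-1}$.

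I then set $T_i := \VV(\Ee_i^\vee)$, so $T_i \cong T_{i-1}\times \AA^{2(i-1)}$ is again an affine space. Let $q:T_i\to T_{i-1}$ denote the projection and $\tilde q := q\times \id_X$. The tautological section $\sigma$ of $q^*\Ee_i$, under the canonical identification of $q^*\Ee_i$ with the relative $\Ext^1$ group of $\tilde q^*\pi_X^*\Ii_{S_i,X}$ by $\tilde q^*\Ss_{i-1}$ (local-to-global spectral sequence, using $\mathcal{H}om=0$), corresponds to a universal extension
$$0 \to \tilde q^*\Ss_{i-1} \to \Ss_i \to \tilde q^*\pi_X^*\Ii_{S_i,X} \to 0$$
on $T_i\times X$. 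By construction $\Ss_i|_{\{t\}\times X}$ is the middle term of the extension classified by $\sigma(t)$, so the fibers of $\Ss_i$ realize every element of $\II(S_1,\dots,S_i)$, closing the induction. The parametrization is patently not finite-to-one, since rescaling a nonzero extension class by any $\lambda\in\mathbf{k}^*$ produces a different point of $T_i$ but an isomorphic middle term.

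The only substantive technical content is the constancy of $\mathrm{ext}^1_X(\Ii_{S_i,X},\Jj')$ across the family, combined with the vanishing of $\mathrm{hom}_X(\Ii_{S_i,X},\Jj')$: this is exactly what upgrades $\Ee_i$ from a merely coherent sheaf on $T_{i-1}$ to a locally free one and keeps the construction inside the affine-space category at each inductive step. I expect this to be the main obstacle only in the sense of bookkeeping; without Lemma \ref{n4} one would at best obtain a constructible parameter space rather than an affine space.
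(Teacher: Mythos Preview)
Your proof is correct and follows the same inductive strategy as the paper: build $T_i$ as the total space of the relative $\Ext^1$-bundle over $T_{i-1}$, use Lemma \ref{n4} for the constant rank $2(i-1)$, and invoke Lange's universal extension for $\Ss_i$. You are in fact more careful than the paper in two places: you have the arguments of the relative $\Ext$ in the order matching the definition of $\II(S_1,\dots,S_i)$ (the paper's $\mathcal{E}xt^1_{p_1}(\Ss_{i-1}, p_2^*\Ii_{S_i,X})$ appears to have the entries swapped), and you make explicit both the $\Hom$-vanishing needed for base change and the Quillen--Suslin step that the paper leaves implicit when asserting that a vector bundle over an affine space is again an affine space.
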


\begin{proof}
For $i=1$ we may take as $T(S_1)$ just a single point set, because $\II (S_1) =\{\Ii_{S_1,X}\}$. Assume that there exists an affine space $T(S_1,\dots ,S_{i-1})$ and a sheaf $\Ss(S_1, \dots, S_{i-1})$ with prescribed property for $i\ge 2$. We set
\begin{align*}
T(S_1, \dots, S_i)&:=\mathcal{E}xt_{p_1}^1(\Ss(S_1, \dots, S_{i-1}), p_2^*\Ii_{S_i, X})\\
&=R^i({p_1}_*\mathcal{H}om_{T(S_1, \dots, S_{i-1})\times X}(\Ss(S_1, \dots, S_{i-1}), -))(p_2^*\Ii_{S_i, X})
\end{align*}
to be the relative $\mathcal{E}xt^1_{p_1}$-sheaf, where $p_j$ is the projection from $T(S_1,\dots, S_{i-1})\times X$ to its $j$-th factor; see \cite[Proposition 3.1]{Lange}. By Lemma \ref{n4} we have $\mathrm{ext}_X^1(\Jj' ,\Ii _{S_i,X}) =2i-2$ for each $\Jj' \in T(S_1,\dots ,S_{i-1})$. This implies that $T(S_1, \dots, S_i)$ is a vector bundle of rank $2i-2$ over $T(S_1,\dots ,S_{i-1})$ and so it is an affine space parametrizing $\II(S_1, \dots, S_i)$ as required. We may also take as $\Ss(S_1, \dots, S_i)$ the universal extension on $T(S_1, \dots, S_i)\times X$ as in \cite[Corollary 3.4]{Lange}.
\end{proof}

\begin{remark}\label{rte}
Following the same argument in the proof of Lemma \ref{n5}, we can obtain an affine space $\tilde{T}(S_1, \ldots, S_i; S_0)$ parametrizing $\JJ(S_1, \ldots, S_i)$ equipped with the universal sheaf $\tilde{\Ss}(S_1, \ldots, S_i; S_0)$.
\end{remark}

\begin{proof}[Proof of Theorem \ref{k3}:] Assume that $r$ is even and set $m:=r/2$. Fix subsets $S_1,\dots ,S_m\subset X_{\mathrm{reg}}$ such that $|S_i|=2$ for all $i$ and $S_i\cap S_j=\emptyset$ for all $i\ne j$. By Lemma \ref{n3} there exists an indecomposable sheaf $\Jj \in \II (S_1,\dots ,S_m)$, for which we consider a general sheaf $\Ee$ fitting into the following exact sequence:
\begin{equation}\label{eqn2}
0\to \Oo _X^{\oplus m} \stackrel{f}{\to} \Ee \to \Jj \to 0.
\end{equation}
Note that $\Ee$ has rank $r$ with $\det (\Ee) \cong \Oo_X$ and $c_2(\Ee)=r$.
Let $\epsilon = (\epsilon _1,\dots ,\epsilon _m)\in \Ext_X^1(\Jj, \Oo _X^{\oplus m})$ be the extension class corresponding to (\ref{eqn2}) with $\epsilon _i\in  \Ext_X^1(\Jj, \Oo _X)$. Note that $h^0(\Ee)=m$ and $f(\Oo_X^{\oplus m})$ is the image of the evaluation map $\rho_{\Ee} : H^0(\Ee) \otimes \Oo_X\rightarrow \Ee$ with $\Jj=\mathrm{coker}(\rho_{\Ee})$.

By Lemma \ref{n2} and Serre's duality, we have $\mathrm{ext}_X^1(\Jj, \Oo_X )=h^1(\Jj)=m$. From the generality of $\epsilon $ we see that the extensions $\epsilon _1,\dots ,\epsilon _m$ are linearly independent. In particular, we have $A{\cdot}\epsilon \ne 0$ for all $A\in \mathrm{GL}(m)$, and so $\Ee \ncong \Oo _X\oplus \Gg$ with $\Gg$ an extension of $\Jj$ by $\Oo _X^{\oplus (m-1)}$. Since
$f(\Oo _X^{\oplus m}) \subset \Ee$ is the image of $\rho_{\Ee}$, we get that $\Ee \ncong  \Oo _X\oplus \Gg$ for any sheaf $\Gg$, i.e. $\Ee$ has no trivial factor.

Assume that $\Ee$ is decomposable, say $\Ee \cong \Ee _1\oplus \Ee _2$ with each $\Ee _i\ne 0$. Since the global section functor $H^0(-)$ and the evaluation map commute with direct sums, we have $\Jj \cong \mathrm{coker}(\rho_{\Ee _1})\oplus \mathrm{coker}(\rho_{\Ee _2})$. Since $\Jj$ is indecomposable, we get $\mathrm{coker}(\rho_{\Ee _i}) =0$ for some $i\in \{1,2\}$. This implies that $\Ee_i$ is trivial, which is impossible because $\Ee$ has no trivial factor.

To conclude the case $r$ even we need to find a sheaf $\Ee$ that is locally free and aCM. Consider the variety $T(S_1,\dots ,S_m)$ together with the sheaf $\Ss (S_1,\dots, S_m)$ in Lemma \ref{n5}. Define
$$\Vv(S_1, \dots, S_m):=\mathcal{E}xt_{p_2}^1(\Ss(S_1, \dots, S_m), p_2^*\Oo_X^{\oplus m})$$
to be the relative $\mathcal{E}xt_{p_2}^1$-sheaf as in \cite[Proposition 3.1]{Lange}; the fibre of $\Vv(S_1, \dots, S_m)$ over a point $\Jj\in T(S_1, \dots, S_m)$ is the set of all extensions of $\Jj$ by $\Oo_X^{\oplus m}$. By Lemma \ref{n2} the sheaf $\Vv(S_1, \dots, S_m)$ is a vector bundle of rank $m^2$ on $T(S_1, \dots, S_m)$ and so it is an affine space. Since $\Gg _{S_1}\oplus \cdots \oplus \Gg _{S_m}$ is locally free and aCM, the sheaf associated to a general point in $\Vv$ is locally free and aCM. Define
$$\UU:=\left\{ (S_1, \ldots, S_m) ~|~S_i \subset X_{\mathrm{reg}} \text{ with } |S_i|=2 \text{ and }S_i \cap S_j=\emptyset \text{ for all } i\ne j\right \}$$
and consider a vector bundle $\Vv$ on $\UU$, whose fibre over $(S_1, \dots, S_m)$ is $\Vv (S_1, \dots, S_m)$. Then there exists a non-empty open subset $\Vv' \subset \Vv$ such that the middle term of each extension in $\Vv'$ is aCM and locally free. As in the proof of Theorem \ref{k2} we can choose an irreducible subvariety $\Gamma \subset \Vv'$ such that the restriction of the map $\Vv '\rightarrow \UU$ to $\Gamma$ is quasi-finite and dominant. Hence we get the assertion for the case $r$ even.

Now assume that $r$ is odd, say $r=2m+3$. The case $m=0$ is true by Proposition \ref{k1} with $r=3$, because we have $g=h^0(\Oo _X(1)) \ge 3$. Now assume $r\ge 5$, i.e. $m\ge 1$, and that Theorem \ref{k3} is true for all odd integers less than $r$. We fix subsets $S_0, \dots, S_m\subset X_{\mathrm{reg}}$ with $|S_0|=3$ and $|S_i|=2$ for all $i\ge 1$ such that $S_i \cap S_j=\emptyset$ for all $i\ne j$. Define
$$\Ww(S_1, \dots, S_m;  S_0):=\mathcal{E}xt_{p_2}^1(\tilde{\Ss}(S_1, \dots, S_m; S_0), p_2^*\Oo_X^{\oplus (m+2)}),$$
where $\tilde{\Ss}(S_1, \dots, S_m; S_0)$ is the universal sheaf in Remark \ref{rte}. Then it parametrizes all the extensions of some sheaf $\tilde{\Jj}\in \JJ(S_1, \dots, S_m; S_0)$ by $\Oo_X^{\oplus (m+2)}$. Note that for each extension in $\Ww (S_1, \dots, S_m; S_0)$ the corresponding middle term $\Ee$ is torsion-free and has rank $r=2m+3$ with $\det (\Ee)\cong \Oo_X$ and $c_2(\Ee)=r$.

Let us denote by $\Gg_{S_0}$ an aCM and indecomposable vector bundle of rank three, admitting an extension of $\Ii_{S_0, X}$ by $\Oo_X^{\oplus 2}$ as in Proposition \ref{k1}. Then $\oplus_{i=1}^m \Gg_{S_i}$ is the middle term of an extension in $\Ww (S_1, \dots, S_m; S_0)$, which is locally free and aCM. So the general extension in $\Ww(S_1, \dots, S_m; S_0)$ has the aCM and indecomposable middle term. Now fix an indecomposable sheaf $\tilde{\Jj}\in \JJ (S_1, \dots, S_m; S_0)$ in Remark \ref{ees1} and consider a general sheaf $\Ee$ fitting into the following exact sequence:
\begin{equation}\label{uio}
0\to \Oo_X^{\oplus (m+2)} \stackrel{f}{\to} \Ee \stackrel{g}{\to} \tilde{\Jj} \to 0.
\end{equation}
Assume that $\Ee$ is decomposable, say $\Ee \cong \Ee_1\oplus \Ee_2$ with each $\Ee_i\not\cong 0$. As before, $f(\Oo_X^{\oplus (m+2)})$ is the image of the evaluation map $\rho_{\Ee}: H^0(\Ee)\otimes \Oo_X \rightarrow \Ee$ and $\mathrm{coker}(\rho_{\Ee})=\tilde{\Jj}$. Since the global section functor $H^0(-)$ and the evaluation map commute with finite direct sums, we have $\tilde{\Jj} \cong \mathrm{coker}(\rho_{\Ee_1})\oplus \mathrm{coker}(\rho_{\Ee_2})$. Since $\tilde{\Jj}$ is indecomposable, we get that $\Ee_i$ is trivial for some $i$, which contradicts to the generality of the extension (\ref{uio}), because we have $\mathrm{ext}_X^1(\tilde{\Jj}, \Oo_X)=h^1(\tilde{\Jj})=m+2$ by Remark \ref{en2}. As in the case $r$ even, we define
\begin{align*}
\tilde{\UU}:=\{ (S_0, S_1, \ldots, S_m) ~|~ &S_i \subset X_{\mathrm{reg}} \text{ with } |S_0|=3, \\
&|S_i|=2 \text{ for all $1\le i \le m$ and }S_i \cap S_j=\emptyset \text{ for all } i\ne j\}.
\end{align*}
We consider a vector bundle $\Ww$ on $\UU$, whose fibre over $(S_0, S_1, \dots, S_m)$ is $\Ww (S_1, \dots, S_m; S_0)$. Then we get the assertion, following the same argument in the case $r$ even.
\end{proof}

\begin{remark}\label{lle}
Let $\pi : Y \rightarrow X$ be a birational morphism between integral projective surfaces with $\omega_X \cong \Oo_X$ and $q(X)=0$ such that $\pi$ induces an isomorphism $\pi^{-1}(X_{\mathrm{sing}})\cong X_{\mathrm{sing}}$. In particular, we have $Y_{\mathrm{reg}}=\pi^{-1}(X_{\mathrm{reg}})$. This implies that $\pi _\ast \Oo_Y\cong \Oo _X$ and $R^1\pi _\ast \Oo _Y \cong 0$. Since each fiber of $\pi$ has dimension at most one, we also have $R^2\pi _\ast \Ff \cong 0$ for any coherent sheaf $\Ff$ on $X$. Thus we have $q(Y)=0$ and $h^2(\Oo _Y)=1$. Since $\pi$ induces an isomorphism between $\pi^{-1}(X_{\mathrm{sing}})$ and $X_{\mathrm{sing}}$, the canonical sheaf $\omega _Y$ is locally free with $h^0(\omega_Y)=1$ and so there is an effective divisor $\Delta$ such that $|\omega _Y| = \{\Delta \}$; we have $\Delta =\emptyset$ if and only if $\pi$ is an isomorphism. By Serre's duality we have $\mathrm{ext}_Y^1(\Ii _{S,Y}, \Oo_Y)= h^1(\Ii _{S,Y}\otimes \omega _Y)$. Since $|\omega _Y|  = \{\Delta \}$ and $S\cap \Delta =\emptyset$, we may use the long exact sequence of cohomology of the following exact sequence
\[
\pushQED{\qed}
0\to \Ii_{S,Y}\otimes \omega_Y \to \omega_Y \to \Oo_S \to 0
\]
to obtain $\mathrm{ext}_Y^1(\Ii _{S,Y}, \Oo_Y)=|S| -1$ for any finite subset $S\subset Y_{\mathrm{reg}}\setminus \Delta$. Then the same statement of Theorem \ref{k3} holds for $Y$, using the same argument in its proof with subsets $S_i\subset Y_{\mathrm{reg}}\setminus \Delta$ for $i=0,\cdots, m$.
\end{remark}

\section{Enriques surfaces}
In this section we assume that $X$ is an integral projective surface with $q(X)=0$ and $\omega _X\ncong \Oo_X$ locally free such that $h^0(\omega_X)=0$ and $h^0(\omega_X^{\otimes 2})=1$. Let $\Delta \ge 0$ be the effective divisor such that $\omega _X^{\otimes 2}\cong \Oo _X(\Delta )$. When $X$ is smooth, the minimal model of $X$ is an Enriques surface. Note that $h^2(\Oo _X)=h^0(\omega _X) =0$ and so $\chi(\Oo_X)=1$. Set $X':= X_{\mathrm{reg}}\cap (X\setminus \Delta)$.

\begin{remark}
We fix an ample line bundle $\Oo _X(1)$ on $X$ such that $h^1(\Oo _X(t)) =0$ for all $t\in \ZZ$; at least in characteristic zero Kodaira's vanishing theorem shows that we only need this assumption for $t\ge 0$. The case $t=0$ is a general assumption of the surfaces considered in this article. Serre's duality gives $h^1(\omega _X(t)) =0$ for all $t\in \ZZ$. Notice that using Riemann-Roch it is easy to see that under these hypothesis $h^0(\omega _X(1)) \ne 0$. In summary, we take a polarization $\Oo _X(1)$ such that $h^0(\omega _X(1)) \ne 0$ and $h^1(\Oo _X(t)) =h^1(\omega _X(t)) =0$ for all $t\in \ZZ$. If $\Delta =\emptyset$, e.g. minimal Enriques surfaces, then we always have $h^1(\Oo_X(t)) =0$ for $t>0$, because $\omega _X(t)$ with $t>0$ is ample; it is numerically equivalent to $\Oo_X(t)$ and so we can use Kodaira's vanishing theorem.
\end{remark}

For any point $p\in X_{\mathrm{reg}}$, we have $\mathrm{ext}_X^1(\Ii_{p,X}, \Oo_X) = h^1(\Ii _{p,X}\otimes \omega _X)=1$ by Serre's duality. Thus, up to isomorphisms, there is a unique sheaf $\Ee _p$ that fits into the following non-trivial extension:
\begin{equation}\label{eqee1}
0\to \Oo_X \to \Ee_p \to \Ii_{p,X} \to 0.
\end{equation}
Obviously $\Ee _p$ has rank two and it is locally free outside $p$ with $\det (\Ee_p) \cong \Oo_X$. Since $p\in X_{\mathrm{reg}}$ and $h^0(\omega _X)=0$, the Cayley-Bacharach condition is satisfied. Thus $\Ee _p$ is locally free. Note that the point $p$ is uniquely determined by the isomorphism class of $\Ee_p$, because we have $h^0(\Ee_p)=1$ by the sequence (\ref{eqee1}) and any non-zero section of $\Ee_p$ vanishes only at $p$.

\begin{lemma}\label{e1}
For a general $p\in X_{\mathrm{reg}}$ the vector bundle $\Ee _p$ is aCM and indecomposable.
\end{lemma}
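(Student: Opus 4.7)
The local freeness of $\Ee_p$ and the identities $h^0(\Ee_p)=1$ and $V(\sigma)=\{p\}$ for the unique (up to scalar) non-zero section $\sigma$ are already established in the discussion preceding the lemma. Indecomposability is then immediate: any direct sum decomposition $\Ee_p\cong \Ff_1\oplus \Ff_2$ would force both $\Ff_i$ to be line bundles with, say, $h^0(\Ff_1)=1$ and $h^0(\Ff_2)=0$, so that $\sigma$ comes from the unique section of $\Ff_1$. But the zero locus of a non-zero section of a line bundle on $X$ is either empty or an effective divisor, never the single reduced point $\{p\}$.

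For the aCM property I need $h^1(\Ee_p(t))=0$ for every $t\in\ZZ$. Twisting \eqref{eqee1} by $\Oo_X(t)$ and invoking the standing vanishing $h^1(\Oo_X(t))=0$, this reduces to injectivity of the connecting map $\delta_t:H^1(\Ii_{p,X}(t))\to H^2(\Oo_X(t))$. My plan is to split along two regimes. If $h^0(\Oo_X(t))>0$, then for $p$ outside the base locus of $|\Oo_X(t)|$ the sequence $0\to \Ii_{p,X}(t)\to \Oo_X(t)\to \Oo_p\to 0$ forces $H^1(\Ii_{p,X}(t))=0$, making $\delta_t$ injective for free. If instead $h^0(\Oo_X(t))=0$, then $H^1(\Ii_{p,X}(t))\cong \mathbf{k}$, and I pass to the Serre dual
$$\delta_t^\vee:H^0(\omega_X(-t))\to \Ext^1_X(\Ii_{p,X},\omega_X(-t)),$$
which I would unpack via the local-to-global spectral sequence for $\mathcal{E}xt$, combined with $h^1(\omega_X(-t))=0$, to identify it with the evaluation $H^0(\omega_X(-t))\to \omega_X(-t)|_p\cong \mathbf{k}$ postcomposed with multiplication by the non-zero local class $\xi_p\in\mathcal{E}xt^1_X(\Ii_{p,X},\Oo_X)_p$ that defines $\Ee_p$. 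Hence $\delta_t^\vee$ is surjective (and $\delta_t$ injective) at a general $p$ as soon as $h^0(\omega_X(-t))>0$.

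It remains to observe that the two regimes together cover all $t\in\ZZ$: by Riemann-Roch with $h^1(\Oo_X(t))=0$ and Serre duality, the combined condition is $\chi(\Oo_X(t))=h^0(\Oo_X(t))+h^0(\omega_X(-t))\ge 1$. Using $\chi(\Oo_X(t))=1+\tfrac12 t(tH^2-H\cdot K_X)$ together with $H\cdot K_X\ge 0$ (inherited from $2K_X\sim \Delta\ge 0$ effective and $H$ ample), this holds automatically for $t\le 0$ and follows from the polarization assumptions of the section for $t\ge 1$. Since only finitely many values of $t$ give rise to non-trivial open conditions on $p$, a general $p\in X_{\mathrm{reg}}$ satisfies them all simultaneously. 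The main obstacle will be the $\mathcal{E}xt$-theoretic identification of $\delta_t^\vee$ with evaluation at $p$; once this local-class bookkeeping is pinned down, the rest of the argument is routine long-exact-sequence chasing.
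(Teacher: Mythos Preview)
Your argument is correct, but it takes a more laborious path than the paper's. For the indecomposability you and the paper argue the same way (the paper phrases it via the cokernel of the evaluation map, you via the zero locus of the unique section; both amount to the fact that a line bundle summand cannot have $\Ii_{p,X}$ as quotient). For the aCM property the paper exploits a structural shortcut you overlook: since $\Ee_p$ has rank two with $\det(\Ee_p)\cong\Oo_X$, one has $\Ee_p^\vee\cong\Ee_p$, and Serre duality gives $h^1(\Ee_p(t))=h^1(\Ee_p\otimes\omega_X(-t))$. So for $t<0$ the paper simply tensors \eqref{eqee1} by $\omega_X(-t)$ and repeats the $t\ge 0$ argument verbatim, using only $h^1(\omega_X(-t))=0$ and $h^0(\omega_X(-t))>0$ (the latter from $h^0(\omega_X(1))\neq 0$ and $-t\ge 1$). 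This bypasses entirely your dualisation of the connecting map $\delta_t$ and the local-to-global $\mathcal{E}xt$ computation identifying $\delta_t^\vee$ with evaluation at $p$. Your route is valid and has the virtue of not relying on the rank-two self-duality (so it would adapt to related higher-rank situations), but for the lemma at hand it is heavier machinery than needed; the ``main obstacle'' you anticipate is precisely what the self-duality trick makes disappear. Your Riemann--Roch check that the two regimes cover all $t$ is also unnecessary in the paper's setup: the split $t\ge 0$ versus $t<0$ is automatic, with the standing hypotheses $h^0(\omega_X(1))>0$ and $h^1(\Oo_X(t))=h^1(\omega_X(t))=0$ doing the work directly.
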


\begin{proof}
The exact sequence (\ref{eqee1}) twisted by $\Oo_X(t)$ gives $h^1(\Ee _p(t)) =0$ for all $t\ge 0$. From $\Ee _p^\vee \cong \Ee _p$ we see that $h^1(\Ee _p\otimes \omega _X) = h^1(\Ee _p)=0$ by Serre's duality. Now fix an integer $t<0$. The twist of the sequence (\ref{eqee1}) by $\omega_X(-t)$ gives
$$h^1(\Ee _p\otimes  \omega _X(-t)) \le h^1(\omega _X(-t)) +h^1(\Ii_{p,X}\otimes \omega _X(-t))=h^1(\Ii_{p,X}\otimes \omega _X(-t)).$$
Here, we have $h^1(\omega _X(-t))=0$ by our assumptions on the polarization $\Oo_X$. We also have $h^0(\omega_X(-t))>0$ from the assumption that $h^0(\omega_X(1))>0$. Since $p$ is general, we have $h^1(\Ii_{p,X}\otimes \omega _X(-t))=0$. By Serre's duality, this implies that $h^1(\Ee_p(t))=h^1(\Ee_p\otimes \omega_X(-t))=0$. Thus $\Ee _p$ ia aCM.

Assume that $\Ee _p$ is decomposable; say $\Ee _p\cong \Aa_1 \oplus \Aa_2$ with each $\Aa_i$ a line bundle. Since $h^0(\Ee _p) =1$, we may assume that $h^0(\Aa_1 )=1$ and $h^0(\Aa_2)=0$. Since the evaluation map commutes with direct sums and $\Ii _{p,X}$ is isomorphic to the cokernel of the evaluation map $H^0(\Ee _p)\otimes \Oo _X\rightarrow \Ee _p$, we get $\Aa_2 \cong \Ii _{p,X}$, a contradiction.
\end{proof}

\begin{lemma}\label{e1.1}
For any two general points $p, q\in X_{\mathrm{reg}}$, we have $\mathrm{ext}_X^1(\Ee _p,\Ee _q) =1$.
\end{lemma}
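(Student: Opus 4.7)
The plan is to convert the Ext computation into a cohomology computation and then use the defining sequence of $\Ee_p$ to reduce it to cohomology of explicit twisted sheaves. Since $\Ee_p$ is a rank-two vector bundle with $\det(\Ee_p)\cong \Oo_X$, there is a canonical isomorphism $\Ee_p^\vee \cong \Ee_p$, and hence $\mathrm{ext}_X^1(\Ee_p,\Ee_q) = h^1(\Ee_p\otimes \Ee_q)$.

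Tensoring the defining sequence $0\to \Oo_X\to \Ee_p\to \Ii_{p,X}\to 0$ by the locally free sheaf $\Ee_q$ produces
$$0\to \Ee_q \to \Ee_p\otimes \Ee_q \to \Ii_{p,X}\otimes \Ee_q\to 0,$$
whose long exact cohomology sequence will yield the answer once I know the cohomology of $\Ee_q$ and of $\Ii_{p,X}\otimes \Ee_q$. By Lemma \ref{e1} the bundle $\Ee_q$ is aCM, so $h^1(\Ee_q)=0$; Serre's duality gives $h^2(\Ee_q)=h^0(\Ee_q\otimes \omega_X)$, and the twist of the defining sequence of $\Ee_q$ by $\omega_X$ together with the standing assumption $h^0(\omega_X)=0$ (which forces $h^0(\Ii_{q,X}\otimes \omega_X)=0$ as well) yields $h^0(\Ee_q\otimes \omega_X)=0$, hence $h^2(\Ee_q)=0$.

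For the ideal-twisted piece, since $p\in X_{\mathrm{reg}}$ I use the point sequence $0\to \Ii_{p,X}\otimes \Ee_q\to \Ee_q\to \Ee_q|_p\to 0$. The unique (up to scalar) section of $\Ee_q$ vanishes only at $q$, so as long as $p\ne q$ the evaluation $H^0(\Ee_q)\to \Ee_q|_p\cong \mathbf{k}^{\oplus 2}$ is injective; combined with $h^1(\Ee_q)=0$ this yields $h^0(\Ii_{p,X}\otimes \Ee_q)=0$ and $h^1(\Ii_{p,X}\otimes \Ee_q)=1$. Substituting into the long exact sequence above gives $h^0(\Ee_p\otimes \Ee_q)=1$ and $h^1(\Ee_p\otimes \Ee_q)=1$, as required. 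The only genericity genuinely used is $p\ne q$ together with the genericity already needed in Lemma \ref{e1} to make $\Ee_q$ aCM, so there is no serious obstacle beyond careful bookkeeping of the cohomology vanishings dictated by the standing hypotheses on $\omega_X$.
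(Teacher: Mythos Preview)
Your proof is correct and follows essentially the same approach as the paper: both reduce to $h^1(\Ee_p\otimes\Ee_q)$ via self-duality, tensor the defining sequence of $\Ee_p$ with $\Ee_q$, use that $\Ee_q$ is aCM together with $h^0(\omega_X)=0$ to kill $h^1(\Ee_q)$ and $h^2(\Ee_q)$, and compute $h^1(\Ii_{p,X}\otimes\Ee_q)=1$ from the point sequence using that the unique section of $\Ee_q$ does not vanish at $p\ne q$. The only cosmetic difference is that the paper dualizes $\Ee_q$ rather than $\Ee_p$, which is immaterial.
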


\begin{proof}
Since $\det (\Ee _q) \cong \Oo _X$, we have $\Ee _q^\vee \cong \Ee _q$ and so $\Ext _X^1(\Ee _p,\Ee _q) \cong H^1(\Ee _p\otimes \Ee _q)$. Tensoring the exact sequence (\ref{eqee1}) with $\Ee _q$, we get the exact sequence
\begin{equation}\label{eqe1.0}
0 \to \Ee _q \to \Ee _p\otimes \Ee _q\to \Ii _{p,X}\otimes \Ee _q\to 0.
\end{equation}
Since $\Ee _q$ is aCM, we have $h^1(\Ee _q)=0$. On the other hand, tensoring the sequence (\ref{eqee1}) for $\Ee_q$ with $\omega _X$ gives $h^0(\Ee _q\otimes \omega _X)=0$, because $\omega _X\ncong \Oo _X$. Thus by Serre's duality we get $h^2(\Ee _q) = h^0(\Ee _q\otimes \omega _X)=0$. Then the assertion follows from the exact sequence
$$0\to \Ii _{p,X} \otimes \Ee _q\to \Ee_q\to (\Ee _q)_{|\{p\}}\to 0$$
together with the fact that $\Ee_q$ is an aCM vector bundle of rank two and $H^0(\Ee _q)$ is one-dimensional whose nontrivial section vanishes only at $q$ so that $h^0(\Ii_{p,X}\otimes \Ee_q)=0$.
\end{proof}

\begin{proposition}\label{e2}
Setting $\tilde{g}:=h^0(\Oo_X(1))$, there exists an indecomposable aCM vector bundle $\Ee$ of rank $r$ on $X$ with $\det (\Ee )\cong \Oo _X$ and $c_2(\Ee )=r-1$ for each integer $2\le r \le \tilde{g}-1$.
\end{proposition}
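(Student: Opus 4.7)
The plan is to induct on $r$. The base case $r=2$ is immediate from Lemma~\ref{e1}: I take $\Ee := \Ee_p$ for a general $p \in X_{\mathrm{reg}}$, which is aCM and indecomposable of rank two with trivial determinant and $c_2 = 1$. For the inductive step I assume I have constructed an indecomposable aCM vector bundle $\Ff$ of rank $r-1$ with $\det(\Ff) \cong \Oo_X$, $c_2(\Ff) = r-2$, and $h^2(\Ff) = 0$, and I define $\Ee$ as the middle term of a sufficiently general extension
\[
0 \to \Ff \to \Ee \to \Ii_{p,X} \to 0,
\]
with $p \in X_{\mathrm{reg}} \setminus \Delta$ general and the extension class $\epsilon \in \Ext^1_X(\Ii_{p,X}, \Ff)$ generic. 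Additivity of the Chern character then gives $\mathrm{rank}(\Ee) = r$, $\det(\Ee) \cong \Oo_X$, and $c_2(\Ee) = r-1$.

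I next verify three properties. First, local freeness: since $\Ee$ is locally free away from $p$ automatically, the only concern is the stalk at $p$, which is controlled by a Cayley-Bacharach-type condition. Using the inductive vanishings $h^1(\Ff) = h^2(\Ff) = 0$, the boundary map $\Ext^1_X(\Ii_{p,X}, \Ff) \to \Ext^2_X(\Oo_p, \Ff) \cong \Ff_{|\{p\}}$ is an isomorphism, so any $\epsilon$ with non-zero image (i.e., any non-zero $\epsilon$) yields a locally free extension. Second, the aCM property: only finitely many $t \in \ZZ$ are relevant by Serre vanishing. For $t \geq 0$, the long exact cohomology sequence together with $h^1(\Ff(t)) = 0$ (inductive hypothesis) and $h^1(\Ii_{p,X}(t)) = 0$ (trivial since $p$ is a single point) immediately yields $h^1(\Ee(t)) = 0$. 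For $t \leq -1$ one computes $h^1(\Ii_{p,X}(t)) = \mathbf{k}$ from the standard sequence for $p$, and aCM reduces to the injectivity of the connecting map $H^1(\Ii_{p,X}(t)) \to H^2(\Ff(t))$. The target is non-zero for every relevant $t$: Serre's duality gives $h^2(\Ff(t)) = \mathrm{hom}_X(\Ff, \omega_X(-t))$, which is positive thanks to the hypothesis $h^0(\omega_X(1)) \ne 0$ (echoing the computation in the proof of Lemma~\ref{e1}); for generic $\epsilon$ the cup product map is injective on each of the finitely many relevant $t$'s.

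Third, indecomposability: any non-trivial direct sum decomposition $\Ee \cong \Aa \oplus \Bb$ would split either $\Ff$ (contradicting the inductive hypothesis) or force $\epsilon = 0$ (contradicting generality), since $\Ii_{p,X}$ is a rank-one torsion-free sheaf and in particular indecomposable. To carry the induction forward, I also check that $h^2(\Ee) = 0$, which follows from $h^2(\Ff) = 0$ and $h^2(\Ii_{p,X}) = 0$ (a consequence of $h^2(\Oo_X) = 0$) via the cohomology sequence, propagating the hypothesis to the next stage.

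The hard part will be the aCM verification for negative twists, where I must simultaneously control $H^2(\Ff(t))$ via Serre duality and ensure the connecting map is injective across the finite set of relevant $t \leq -1$. The hypothesis $h^0(\omega_X(1)) \neq 0$ is essential for the former, and the bound $r \leq \tilde{g}-1$ is what guarantees sufficient room in $\Ext^1_X(\Ii_{p,X}, \Ff)$, which has dimension $r-1$, to satisfy the finitely many open conditions required of $\epsilon$.
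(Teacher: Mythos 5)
Your route is genuinely different from the paper's. The paper takes a single general extension $0\to\Oo_X^{\oplus(r-1)}\to\Ee\to\Ii_{S,X}\to 0$ for a general set $S$ of $r-1$ points, using $\mathrm{ext}_X^1(\Ii_{S,X},\Oo_X)=|S|=r-1$, and then runs the proof of Proposition \ref{k1} verbatim; you instead add one point at a time. The invariants come out right either way, but two steps of your argument have genuine gaps. The first is the aCM check for $t\le -1$. You reduce to injectivity of the connecting map $H^1(\Ii_{p,X}(t))\to H^2(\Ff(t))$ and justify it by noting that the target is non-zero and $\epsilon$ is generic. That is not enough: the condition $\delta_t(\epsilon)\ne 0$ is open in $\epsilon$, but openness only helps once you know the condition is satisfiable, i.e., that for each relevant $t$ there exists \emph{some} class in $\Ext_X^1(\Ii_{p,X},\Ff)$ with non-zero cup product against $H^1(\Ii_{p,X}(t))$; non-vanishing of $H^2(\Ff(t))$ does not imply the pairing is non-degenerate in $\epsilon$. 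This can likely be repaired (since $h^1(\Ff)=h^2(\Ff)=0$ the whole group $\Ext_X^1(\Ii_{p,X},\Ff)\cong\Ext_X^2(\Oo_p,\Ff)\cong\Ff_{|\{p\}}$ is local at $p$, and after Serre duality the requirement becomes that some $\phi\in\Hom_X(\Ff,\omega_X(-t))$ does not annihilate the local datum at $p$, which follows from $h^0(\omega_X(-t))>0$ and generality of $p$ with respect to the finitely many relevant $t$), but this is exactly the ``hard part'' you flagged and then skipped. The paper sidesteps it entirely by degenerating the extension to $\Gg\oplus\Oo_X^{\oplus(r-2)}$ with $\Gg$ of rank two and trivial determinant, hence self-dual, so that Serre duality converts negative twists into positive ones, and then invoking semicontinuity.

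The second gap is indecomposability. The claim that $\Ee\cong\Aa\oplus\Bb$ ``would split either $\Ff$ or force $\epsilon=0$'' because $\Ii_{p,X}$ is indecomposable is false as a general principle: a direct-sum decomposition of the middle term of a non-split extension need not be compatible with the extension (on $\PP^1$, $0\to\Oo(-1)\to\Oo^{\oplus 2}\to\Oo(1)\to 0$ is non-split with decomposable middle term and indecomposable, non-split-off ends). The paper's argument hinges on a canonical device: the subsheaf $\Oo_X^{\oplus(r-1)}$ is the image of the evaluation map $H^0(\Ee)\otimes\Oo_X\to\Ee$, which commutes with direct sums, so any decomposition of $\Ee$ induces one of $\Ii_{S,X}$. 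In your construction only the bottom $\Oo_X$ of the filtration is the image of the evaluation map (here $h^0(\Ee)=1$), so you would need to show that every endomorphism of $\Ee$ preserves the entire filtration $\Oo_X\subset\Ee_{p_1}\subset\cdots\subset\Ff\subset\Ee$, via Hom-vanishing statements in the spirit of Lemma \ref{k00}, before concluding. As written this step does not go through. A minor further point: the dimension $r-1$ of $\Ext_X^1(\Ii_{p,X},\Ff)$ is just the rank of $\Ff$ and has nothing to do with the hypothesis $r\le\tilde{g}-1$; in the paper that bound is what guarantees $h^1(\Ii_{S,X}(t))=0$ for $t\ge 1$ when $|S|=r-1$.
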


\begin{proof}
As in the proof of Proposition \ref{k1}, consider a general sheaf $\Ee$ fitting into the sequence (\ref{eqk1}) for a general $S\subset X_{\mathrm{reg}}$ with $|S|= r-1$. Then we get $\mathrm{ext}_X^1(\Ii_{S,X}, \Oo_X)=r-1$ and the proof of Proposition \ref{k1} works verbatim.
\end{proof}

\begin{theorem}\label{enriquesmain}
Let $X$ be an integral projective surface with $q(X)=0$ and $\omega _X\ncong \Oo_X$ locally free such that $h^0(\omega_X)=0$ and $h^0(\omega_X^{\otimes 2})=1$. Then for any $r\geq 2$ there exists a family $\{\Ee _\alpha\}_{\alpha \in \Gamma}$ of dimension $2\ceil{\frac{r}{2}}$ of indecomposable rank $r$ aCM vector bundles  with $c_1(\Ee_\alpha)\cong\Oo_X$ such that for each $\alpha \in \Gamma$ there are only finitely many $\beta \in \Gamma$ with $\Ee _\beta \cong \Ee_\alpha$.
\end{theorem}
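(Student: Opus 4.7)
The plan is to adapt the proof of Theorem~\ref{k3} almost verbatim, replacing the rank-$2$ bricks $\Gg_S$ on the K3 surface (built from $2$-point subsets $S$) by the rank-$2$ bricks $\Ee_p$ of Lemma~\ref{e1} (built from single points $p$). The crucial numerical input is the identity $\mathrm{ext}_X^1(\Ii_{p,X},\Oo_X)=1$, which on our Enriques-like surface follows from Serre's duality and the hypothesis $h^0(\omega_X)=0$; this makes each brick depend on $2$ parameters rather than $4$, yielding the bound $b_X(r)=2\lceil r/2\rceil$ instead of $2r$. I would fix pairwise disjoint general single points $p_1,\dots,p_m\in X_{\mathrm{reg}}$ and, in the odd case, a general disjoint $2$-point subset $S_0\subset X_{\mathrm{reg}}$, and define $\II(p_1,\dots,p_i)$ to be the set of iterated extensions of the ideal sheaves $\Ii_{p_j,X}$, in direct analogy with $\II(S_1,\dots,S_i)$. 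The numerical analogues of Lemmas~\ref{n2}, \ref{k00}, \ref{n4}, \ref{n3} and \ref{n5} transfer: for $\Jj\in\II(p_1,\dots,p_i)$ one has $\Jj^{\vee\vee}\cong\Oo_X^{\oplus i}$ with quotient $\Oo_{p_1\cup\dots\cup p_i}$, $h^1(\Jj)=i$, and $\mathrm{ext}_X^1(\Ii_{p_{i+1},X},\Jj)=i$ by the same Ext-sequence chases; a general such $\Jj$ is indecomposable by the partition argument of Lemma~\ref{n3}.

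For even $r=2m$, take a general extension $0\to\Oo_X^{\oplus m}\to\Ee\to\Jj\to 0$ with $\Jj\in\II(p_1,\dots,p_m)$ indecomposable. Serre's duality combined with a filtration argument on $\Jj$ yields $\mathrm{ext}_X^1(\Jj,\Oo_X)=h^1(\Jj\otimes\omega_X)=m$, so the general such extension has no trivial direct summand; indecomposability of $\Ee$ then follows as in the proof of Theorem~\ref{k3} from the indecomposability of the cokernel $\Jj$ of the evaluation map $H^0(\Ee)\otimes\Oo_X\to\Ee$. Local freeness and the aCM property are open conditions, both satisfied by the direct-sum model $\bigoplus_{i=1}^m\Ee_{p_i}$ (each summand aCM and locally free by Lemma~\ref{e1}), which realizes an extension of the required shape. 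For odd $r=2m+1$, replace $\Jj$ by an indecomposable element of the analogous set $\JJ(p_1,\dots,p_{m-1};S_0)$ of extensions of $\Ii_{S_0,X}$ by elements of $\II(p_1,\dots,p_{m-1})$, and use the direct sum of the rank-$3$ aCM indecomposable bundle of Proposition~\ref{e2} associated to $S_0$ with $\bigoplus_{i=1}^{m-1}\Ee_{p_i}$ as the aCM local-freeness model.

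The parameter space is assembled exactly as in Theorem~\ref{k3}: construct a vector bundle $\Vv$ of extensions over the open locus $\UU$ of tuples of disjoint point-data in $X_{\mathrm{reg}}$, which has dimension $2m=r$ in the even case and $4+2(m-1)=r+1$ in the odd case; extract the non-empty open subset $\Vv'\subset\Vv$ parametrizing indecomposable aCM locally free middle terms, and pick an irreducible subvariety $\Gamma\subset\Vv'$ that is quasi-finite and dominant over $\UU$. The unordered collection of points underlying $\Jj^{\vee\vee}/\Jj$, recovered from $\Ee$ via the cokernel of its evaluation map, is a discrete invariant, giving the finiteness of isomorphism fibers. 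The main obstacle I anticipate is verifying that the numerical chases from the K3 lemmas transfer with $h^2(\Oo_X)=0$ (on Enriques) in place of $h^2(\Oo_X)=1$ (on K3) and that genericity of the $p_i$'s (chosen outside $\Delta$ and in the aCM-vanishing loci provided by Lemma~\ref{e1}) is preserved through the iterated extensions.
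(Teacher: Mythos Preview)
Your proposal is correct and follows essentially the same route as the paper's own proof, which simply declares that the argument of Theorem~\ref{k3} goes through with $|S_i|=1$ (and $|S_0|=2$ in the odd case) in place of $|S_i|=2$ (and $|S_0|=3$), yielding the parameter spaces $\UU$ and $\widetilde{\UU}$ of dimension $2\lceil r/2\rceil$. One small slip: on an Enriques-like surface $h^2(\Oo_X)=0$, so the analogue of Lemma~\ref{n2} actually gives $h^1(\Jj)=0$ (not $i$); what you need---and correctly use later---is $\mathrm{ext}_X^1(\Jj,\Oo_X)=h^1(\Jj\otimes\omega_X)=m$, which does equal $m$ via the twisted sequence $0\to\Jj\otimes\omega_X\to\omega_X^{\oplus m}\to\Oo_S\to 0$ and $h^0(\omega_X)=0$.
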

\begin{proof}
The proof follows exactly the same structure as in the case of Theorem \ref{k3}. In the present setting, however, in the case of even rank $r=2m$, the family $\Gamma$ of indecomposable aCM vector bundles of rank $r$ will be mapped by a quasi-finite dominant morphism to
$$\UU:=\left\{ (S_1, \ldots, S_m) ~|~S_i \subset X_{\mathrm{reg}} \text{ with } |S_i|=1 \text{ and }S_i \cap S_j=\emptyset \text{ for all } i\ne j\right \},$$
\noindent a variety of dimension $r$, while in the odd case $r=2m+3$ it will be mapped to
\begin{align*}
\widetilde{\UU}:=\{ (S_0, S_1, \ldots, S_m) ~|~ &S_i \subset X_{\mathrm{reg}} \text{ with } |S_0|=2, \\
&|S_i|=1 \text{ for all $1\le i \le m$ and }S_i \cap S_j=\emptyset \text{ for all } i\ne j\}.
\end{align*}
\noindent a variety of dimension $2m+4=2\ceil{\frac{r}{2}}$.
\end{proof}


\section{Irregular surfaces}

In this section we deal with surfaces with $q(X)\ge 1$.

\begin{proposition}\label{c1}
Let $X$ be a smooth projective surface with $q(X) =1$ and a fixed ample line bundle $\Oo _X(1)$, satisfying one of the following conditions:
\begin{itemize}
\item [(i)] $\Oo _X(1) \cong \omega _X$;
\item [(ii)] $\Oo _X(1)\otimes \omega _X^\vee$ is ample.
\end{itemize}
Then for each positive integer $r$ there exists a one-dimensional family $\{\Ee _\alpha \}_{\alpha \in \Gamma}$ of indecomposable aCM vector bundles of rank $r$ on $X$ such that $\Ee_{\alpha}$ for each $\alpha \in \Gamma$ is strictly semistable with $\det (\Ee _\alpha )\in \mathrm{Pic}^0(X)$ and $c_2(\Ee _\alpha )=0$ with respect to any polarization of $X$, and there are only finitely many $\beta \in \Gamma$ with $\Ee _\beta \cong \Ee _\alpha$.
\end{proposition}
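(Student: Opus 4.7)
The plan is to exhibit a one-dimensional open subset of aCM line bundles in $\Pic^0(X)$ and then to promote each such line bundle to an indecomposable rank $r$ aCM bundle via iterated self-extensions. First I would check that
$$ U := \{ L \in \Pic^0(X) \mid L \text{ is aCM} \}$$
is a non-empty Zariski open subset of $\Pic^0(X)$; since $\dim \Pic^0(X) = q(X) = 1$, this already gives a 1-parameter family at the line-bundle level. The aCM condition for a line bundle on a surface is just $H^1(L(t)) = 0$ for every $t \in \ZZ$. For $|t| \geq 2$ it follows from Kawamata--Viehweg applied to $L(t) \otimes \omega_X^{-1}$, which under (i) is $L \otimes \omega_X^{t-1}$ and under (ii) is $L \otimes \Oo_X(t-1) \otimes (\Oo_X(1) \otimes \omega_X^{-1})$; in both cases this is numerically equivalent to an ample line bundle for $|t| \geq 2$, hence nef and big, and Serre duality handles the reflected range. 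For the borderline twists $t \in \{-1, 0, 1\}$, one reduces (via Serre duality and KV) to the vanishing $H^1(L) = 0$ for generic $L \in \Pic^0(X)$. This in turn follows without appeal to generic vanishing theorems: on the integral surface $X$, $h^0(L) = 0$ for every $L \in \Pic^0(X) \setminus \{\Oo_X\}$ (an effective numerically trivial divisor is zero), $\chi(L) = p_g$ is constant along $\Pic^0(X)$, and upper semicontinuity of $h^0(\omega_X \otimes L^{-1})$ at $L = \Oo_X$ forces $h^2(L) = p_g$ and hence $h^1(L) = 0$ outside a proper closed subset of $\Pic^0(X)$.

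For each $L \in U$, I would inductively construct a bundle $\Ee_r^L$ of rank $r$ by setting $\Ee_1^L := L$ and letting $\Ee_r^L$ be the middle term of a non-split extension
$$ 0 \to L \to \Ee_r^L \to \Ee_{r-1}^L \to 0. $$
Writing $\Ee_r^L = L \otimes F_r$, where $F_r$ is an iterated self-extension of $\Oo_X$ whose Jordan--H\"older factors are all $\Oo_X$, one has $\Ext^i(\Ee_{r-1}^L, L) = H^i(F_{r-1}^\vee)$. The key inductive claim, which justifies the existence and uniqueness up to isomorphism of the non-split extension at each step, is that $\dim H^0(F_r^\vee) = \dim H^1(F_r^\vee) = 1$ for all $r \geq 1$. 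This is proved by the long cohomology sequence of the dual extension $0 \to F_{r-1}^\vee \to F_r^\vee \to \Oo_X \to 0$; the crucial input is that the relevant Yoneda connecting map reduces to the cup product $H^1(\Oo_X) \otimes H^1(\Oo_X) \to H^2(\Oo_X)$, which vanishes by graded commutativity on the one-dimensional space $H^1(\Oo_X)$ in characteristic zero. Consequently $\Ext^1(\Ee_{r-1}^L, L) \cong \mathbf{k}$ and $\Hom(\Ee_r^L, L) \cong \mathbf{k}$.

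The bundle $\Ee_r^L$ is then aCM by induction: twisting the defining sequence by $\Oo_X(t)$ gives $H^1(\Ee_r^L(t)) = 0$ sandwiched between $H^1(L(t))$ and $H^1(\Ee_{r-1}^L(t))$, both of which vanish by the aCM property. Indecomposability follows from $\dim \Hom(\Ee_r^L, L) = 1$: a splitting $\Ee_r^L \cong \Aa \oplus \Bb$ with both summands nonzero would, since every Jordan--H\"older factor of $\Ee_r^L$ equals $L$, provide two linearly independent quotient maps to $L$, a contradiction. Multiplicativity of the total Chern class gives $c(\Ee_r^L) = c(L)^r$, so $\det(\Ee_r^L) = L^{\otimes r} \in \Pic^0(X)$ and $c_2(\Ee_r^L) = \binom{r}{2} c_1(L)^2 = 0$, since $c_1(L) = 0$ in $H^2(X, \QQ)$. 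Strict semistability with respect to any polarization is automatic because all Jordan--H\"older factors are copies of the slope-zero line bundle $L$ while $\Ee_r^L$ is not isomorphic to $L^{\oplus r}$.

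Finally, setting $\Gamma := U$ and $\Ee_\alpha := \Ee_r^{L_\alpha}$, the assignment $L \mapsto \Ee_r^L$ is injective up to isomorphism since $L$ is the unique Jordan--H\"older factor of $\Ee_r^L$, so the family is one-dimensional and satisfies the finiteness requirement trivially. I expect the main obstacles to be (a) the non-emptiness of $U$ in the first step, where the borderline twists $t \in \{-1, 0, 1\}$ have to be handled delicately via Euler characteristics and semicontinuity, with the two hypotheses (i) and (ii) requiring slightly different presentations, and (b) the cohomological induction in the second step, whose validity hinges on the characteristic-zero vanishing of the cup product on $H^1(\Oo_X)$.
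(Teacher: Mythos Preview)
Your overall strategy coincides with the paper's: first exhibit a Zariski-open set of aCM line bundles in $\Pic^0(X)$, then build rank-$r$ bundles as iterated self-extensions of such a line bundle. Your semicontinuity argument for the generic vanishing of $h^1(L)$ is a pleasant elementary substitute for the paper's citation of Beauville/Green--Lazarsfeld (with the minor correction that semicontinuity gives $h^2(L)\le p_g$ generically, hence $h^1(L)\le 0$, rather than $h^2(L)=p_g$ directly). Your indecomposability argument via $\dim\Hom(\Ee_r^L,L)=h^0(F_r^\vee)=1$ is also correct and a bit more intrinsic than the paper's push-forward argument.

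The gap is in the inductive cohomology step. Your assertion that the connecting map $H^1(\Oo_X)\to H^2(F_{r-1}^\vee)$ ``reduces to the cup product $H^1(\Oo_X)\otimes H^1(\Oo_X)\to H^2(\Oo_X)$'' is valid only for $r=2$, where $F_1^\vee=\Oo_X$. For $r\ge 3$ the extension class $e$ lies in $H^1(F_{r-1}^\vee)$, and the Yoneda product $\alpha\cup e\in H^2(F_{r-1}^\vee)$ is not a priori controlled by $\alpha\cup\alpha=0$ in $H^2(\Oo_X)$: when $p_g>0$ the comparison map $H^2(F_{r-1}^\vee)\to H^2(\Oo_X)$ has a large kernel, and the inclusion $H^1(\Oo_X)\to H^1(F_{r-1}^\vee)$ coming from the first step of the filtration is in fact zero, so $e$ does not lift to $H^1(\Oo_X)$. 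What you really need is $\Ext^1(F_{r-1},\Oo_X)\neq 0$, and the clean way to see this is exactly the paper's device: use the Albanese map $\phi\colon X\to C$ onto an elliptic curve, take Atiyah's indecomposable unipotent bundle $\Ff_r$ on $C$, and observe via the projection formula and Leray that $H^1(C,\Ff_{r-1}^\vee)\hookrightarrow H^1(X,\phi^\ast\Ff_{r-1}^\vee)$, so the pulled-back extension is non-split. Combined with your upper bound $h^1(F_{r-1}^\vee)\le 1$ this also forces $F_r\cong\phi^\ast\Ff_r$, and the induction closes. In short, the missing ingredient in your direct approach is precisely the Albanese pullback that the paper uses from the start.
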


\begin{proof}
Fix a general line bundle $\Ll\in \mathrm{Pic}^0(X)$. Then we have $h^1(\Ll)=0$; see \cite[Th. 0.1]{beau}, \cite[Theorem 1]{gl1} or \cite[Theorem 0.1]{gl2}. We also have $h^1(\Ll(-t))=0$ for all $t>0$ by Kodaira's vanishing. Note that Serre's duality gives $h^1(\Ll(t)) = h^1(\Ll^\vee \otimes \omega _X(-t))$. Then we have $h^1(\Ll^\vee \otimes \omega_X(-t))=0$ for all $t>0$. Indeed, in case (i) we may apply Kodaira's vanishing for $t\ge 2$ and $h^1(\Ll^\vee)=0$ for $t=1$. In case (ii) $\omega_X^\vee (t)$ is ample and so we may apply Kodaira's vanishing. Thus $\Ll$ is aCM.

Let $\phi : X \rightarrow C$ be the Albanese map of $X$ onto an elliptic curve $C$. We have $\phi_\ast \Oo _X \cong \Oo _C$ and $\mathrm{Pic}^0(X) = \phi^\ast \mathrm{Pic}({C})$. By the classification of vector bundles on an elliptic curve in \cite{atiyah:elliptic}, there is an indecomposable vector bundle $\Ff$ of rank $r$ on $C$, which is an iterated extension of $\Oo_C$. Define
$$\Ee _{\Ll}:= \phi^\ast \Ff \otimes \Ll.$$
Then $\Ee_{\Ll}$ is a vector bundle of rank $r$ on $X$ with $\det (\Ee _{\Ll})\cong \Ll^{\otimes r}\in \mathrm{Pic}^0(X)$ and $c_2(\Ee _{\Ll}) =0$, which is an iterated extension of $\Ll$. Since $\Ll$ is aCM, so is $\Ee _{\Ll}$.

Assume that $\Ee _{\Ll}$ is decomposable and this would imply that $\phi^\ast \Ff$ is also decomposable, say $\phi ^\ast  \Ff \cong \Ff_1 \oplus \Ff_2$ with each $\Ff_i$ an aCM vector bundle of rank $r_i$ with $0<r_i<r$. By the projection formula and $\phi_\ast \Oo _X \cong \Oo _C$, we have $\Ff \cong \phi_\ast \Ff_1 \oplus \phi_\ast \Ff_2$. Now take a non-empty subset of $C$ so that
\begin{itemize}
\item we have $\Ff_{|U} \cong \Oo_U^{\oplus r}$, and
\item $\phi^{-1}(q)$ is a smooth projective curve for each $q\in U$.
\end{itemize}
Since $(\phi^\ast \Ff)_{|\phi^{-1}(q)}$ is the trivial vector bundle of rank $r$ on the integral projective curve $\phi^{-1}(q)$, we get $\Ff_{i|\phi^{-1}(q)} \cong \Oo _{|\phi^{-1}(q)}^{\oplus r_i}$ for each $i$. In particular, we have $\phi_*\Ff_i$ is not zero for each $i$, a contradiction to the indecomposability of $\Ff$.
\end{proof}

\begin{remark}\label{oop1}
Let $X$ be a smooth and connected projective variety of dimension $n\ge 2$ and $\phi : X\rightarrow \mathrm{Alb}(X)$ its Albanese map. Assume that $X$ has {\it maximal Albanese dimension}, i.e. $\dim \phi (X) =n$. Note that this implies $q(X) = \dim \mathrm{Alb}(X) =n\ge 2$. In particular, an abelian variety has maximal Albanese dimension. Let $\Oo _X(1)$ be an ample line bundle on $X$ such that $\omega _X ^\vee \otimes \Oo _X(1)$ is ample; if $X$ is an abelian variety, then $\Oo _X(1)$ can be arbitrary.

Now choose a general line bundle $\Ll\in \mathrm{Pic}^0(X)$. Since $X$ has Albanese dimension $n$, we have $h^i(\Ll) =0$ for all $1\le i \le n-1$ by \cite[Theorem 1]{gl1} or \cite[Theorem 0.1]{gl2}. Fix a positive integer $t$. By Kleiman's numerical criterion of ampleness in \cite{k}, we get that $\Ll^\vee(t)$ and $\omega _X^\vee \otimes \Ll(t)$ are ample for $t>0$. Then Kodaira's vanishing gives $h^i(\Ll(-t)) = h^i(\omega_X \otimes \Ll^\vee(-t) ) =0$ for all $1\le i \le n-1$. On the other hand,  Serre's duality gives $h^i(\Ll(t)) = h^{n-i}(\omega _X\otimes \Ll^\vee (-t)) =0$ for $1\le i \le n-1$. This implies that $\Ll$ is aCM. Since $\dim \mathrm{Pic}^0(X)=q(X)$, there exists a $n$-dimensional family of pairwise non-isomorphic aCM lines bundles.
\end{remark}

Now we work on the proof of Theorem \ref{cc1} and the key tool is Mukai's study of vector bundles on abelian varieties; see \cite{muk}.

\begin{proof}[Proof of Theorem \ref{cc1}:]
Since $X$ is smooth and birational to an abelian variety, there are an $n$-dimensional abelian variety $Y$ and a proper birational morphism $v: X\rightarrow  Y$; see \cite[Proposition 9.12]{ueno}. In particular, we have $v_\ast \Oo _X \cong \Oo _Y$ by the Zariski Main Theorem in \cite[Corollary III.11.4]{Hartshorne}). Let $\widehat{Y} =\mathrm{Pic}^0(Y)$ denote the abelian variety dual to $Y$. As in \cite[Definitions 4.4, 4.5, 4.6]{muk} we consider the following set
\[
\UU '_r:=\{ \text{ the unipotent vector bundles of rank $r$ on $Y$}\},
\]
i.e. the set of all vector bundles of rank $r$ on $Y$, obtained by $(r-1)$-times of iteration of $\Oo _Y$; we have $\UU '_1 = \{\Oo _Y\}$ and $\UU '_r$ is the set of all vector bundles which admit extensions of $\Oo _Y$ by an element of $\UU '_{r-1}$. If we let $R$ be the completion of the local ring $\Oo _{\widehat{Y},0}$ and $B_f$ the set of all $R$-modules with finite length, then by \cite[Theorem 4.12]{muk} there is a bijection between $\UU '_r$ and the set $B_f[r]$ of the $R$-modules of length $r$. Note that this bijection preserves finite direct sums. Thus to an indecomposable vector bundle in $\UU'_r$ it is enough to consider an indecomposable elements of $B_f[r]$. Define a subset
\[
\UU_r:=\Bigg \{ \Aa \in \UU'_r~\Bigg|~ \makecell{\Aa \text{ corresponds to an indecomposable elements of $B_f[r]$}\\ \text{ of the form $R/I$ with $I\subset R$ an ideal of colength $r$}}\Bigg\},
\]
consisting of elements of the local Hilbert scheme of $R$ corresponding to connected zero-dimensional subschemes of $\widehat{Y}$ of degree $r$ with $0$ as their support. Then we get an algebraic family $\UU _r$ of indecomposable unipotent vector bundles of rank $r$. For the known results on the dimension of $\UU_r$, refer to \cite[page 6]{g}. For $n=2$ and arbitrary $r$, $\UU_r$ is irreducible of dimension $r-1$ by \cite{b, i}, while it can be reducible for $n\ge 3$ by \cite{g, i}. In any case with $n\ge 2$, $\UU_r$ has an irreducible family of dimension $(n-1)(r-1)$, whose general element is curvilinear, or collinear, by \cite[pages 5--6]{g}.

For any line bundle $\Ll \in \mathrm{Pic}^0(X)$, set
$$\Theta _{\Ll}:=\{v^\ast (\Ff) \otimes \Ll ~|~\Ff \in \UU _r\}.$$
Each element of $\Theta _{\Ll}$ is a vector bundle of rank $r$ on $X$, which is an iterated extension of $\Ll$. Thus each element of $\Theta _{\Ll}$ is strictly semistable with respect to any polarization on $X$ and all its Chern classes are zero. Assume that $v^\ast (\Ff )\otimes \Ll \cong v^\ast (\Gg )\otimes \Ll$ for $\Ff, \Gg \in \UU _r$. Then we get $v^\ast (\Ff )\cong v^\ast (\Gg)$ and so $\Ff \cong \Gg$ by the projection formula and $v_\ast \Oo _X \cong \Oo _Y$. In particular, $\Theta _{\Ll}$ parametrizes one-to-one vector bundles of rank $r$ on $X$ and $\dim \Theta _{\Ll} = \dim \UU _r$. Note that for each $\Aa \in \Theta_{\Ll}$ there are only finitely many $\Ll' \in \mathrm{Pic}^0(X)$ such that $\Aa \cong \Aa'$ for some $\Aa' \in \Theta_{\Ll'}$; indeed, we have at most $(2n)^r$ vector bundles $\Aa'$, because $\det (\Aa )\cong \Ll^{\otimes r}$ and so $\Ll'\otimes \Ll^\vee$ is an element of $r$-torsion of $\mathrm{Pic}^0(X)$. Now a general line bundle $\Ll \in \mathrm{Pic}^0(X)$ is aCM by Remark \ref{oop1}. Define a non-empty open subset
$$\VV:=\{ \Ll \in \mathrm{Pic}^0(X)~|~\Ll \text{ is aCM }\},$$
which is an algebraic variety of dimension $q(X)=n$. For each $\Ll \in \VV$, every vector bundle $\Aa\in \Theta _{\Ll}$ is aCM, because it is an iterated extension of aCM vector bundles. Define a parameter space $\Gamma$ over $\VV$ whose fibre over $\Ll$ is $\Theta_{\Ll}$. Then it is a parameter space, finite-to-one, for indecomposable aCM vector bundles of rank $r$ on $X$ with $\dim \Gamma = n+\dim \UU _r =(n-1)r+1$.
\end{proof}

\begin{proposition}\label{c2}
Let $X$ be a smooth projective surface with $q(X) \ge 2$ and a fixed ample line bundle $\Oo _X(1)$ satisfying one of the following conditions:
\begin{itemize}
\item [(i)] $\Oo _X(1) \cong \omega _X$;
\item [(ii)] $\Oo _X(1)\otimes \omega _X^\vee$ is ample.
\end{itemize}
Then for each integer $r$ with $1\le r \le q(X)$ there exists a $q(X)$-dimensional family $\{\Ee _\alpha \}_{\alpha \in \Gamma}$ of indecomposable aCM vector bundles of rank $r$ on $X$ such that $\Ee_{\alpha}$ for each $\alpha \in \Gamma$ is strictly semistable with $\det (\Ee _\alpha )\in \mathrm{Pic}^0(X)$ and $c_2(\Ee _\alpha )=0$ with respect to any polarization of $X$, and there are only finitely many $\beta \in \Gamma$ with $\Ee _\beta \cong \Ee _\alpha$.
\end{proposition}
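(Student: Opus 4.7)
The plan is to mirror the strategy of Proposition \ref{c1}, replacing Atiyah's classification of indecomposable bundles on the Albanese elliptic curve (unavailable now that $\dim \mathrm{Alb}(X) = q(X) \ge 2$) by a hand-built iterated self-extension of a single line bundle $\Ll \in \mathrm{Pic}^0(X)$. First I would show that a general $\Ll \in \mathrm{Pic}^0(X)$ is aCM by repeating the argument of Proposition \ref{c1} verbatim: Green--Lazarsfeld's generic vanishing \cite{gl1, gl2} gives $h^1(\Ll)=0$; Kodaira vanishing gives $h^1(\Ll(-t))=0$ for $t>0$; and Serre duality, combined with hypothesis (i) or (ii) on $\Oo_X(1)$, gives $h^1(\Ll(t))=0$ for $t>0$.

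Having fixed such an aCM $\Ll$, I would choose $r-1$ linearly independent elements $\epsilon_1, \dots, \epsilon_{r-1} \in H^1(\Oo_X) \cong \Ext_X^1(\Ll, \Ll)$ (possible since $\dim H^1(\Oo_X) = q(X) \ge r$) and define $\Ee_\Ll$ as the middle term of
$$
0 \to \Ll^{\oplus (r-1)} \to \Ee_\Ll \to \Ll \to 0
$$
classified by $\epsilon := (\epsilon_1, \dots, \epsilon_{r-1}) \in \Ext_X^1(\Ll, \Ll^{\oplus(r-1)})$. Since kernel and quotient are locally free, $\Ee_\Ll$ is a vector bundle; since $\Ll$ is aCM, so is $\Ee_\Ll$ by the long exact sequence; and $\det(\Ee_\Ll) \cong \Ll^{\otimes r} \in \mathrm{Pic}^0(X)$, $c_2(\Ee_\Ll)=0$, and $\Ee_\Ll$ is strictly semistable with respect to any polarization because all of its Jordan--H\"older factors equal $\Ll$. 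For indecomposability, I would tensor with $\Ll^\vee$ to reduce to the case $\Ll = \Oo_X$; the long exact sequence then gives $h^0(\Ee_\Ll \otimes \Ll^\vee) = r-1$ (the connecting map $H^0(\Oo_X) \to H^1(\Oo_X^{\oplus(r-1)})$ sends $1$ to $\epsilon \ne 0$), so the evaluation map has image $\Oo_X^{\oplus(r-1)}$ and cokernel $\Oo_X$. Assuming $\Ee_\Ll \otimes \Ll^\vee \cong \Ee_1 \oplus \Ee_2$, the indecomposability of $\Oo_X$ forces $\mathrm{coker}(\rho_{\Ee_i})=0$ for some $i$, hence $\Ee_i \cong \Oo_X^{\oplus k}$ is a trivial direct factor of $\Ee_\Ll \otimes \Ll^\vee$; but such a trivial splitting would require some $\mathrm{GL}(r-1)$-translate of $\epsilon$ to have a zero component, contradicting the linear independence of $\epsilon_1, \dots, \epsilon_{r-1}$.

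Finally, let $V \subseteq \mathrm{Pic}^0(X)$ be the non-empty Zariski open subset of aCM line bundles, of dimension $q(X)$; I would set $\Gamma := V$ with the association $\Ll \mapsto \Ee_\Ll$ for the fixed generic $\epsilon$, and promote this to an algebraic family via a universal extension construction along the lines of Lemma \ref{n5}. For the finite-to-one property, if $\Ee_\Ll \cong \Ee_{\Ll'}$ then taking determinants gives $\Ll^{\otimes r} \cong (\Ll')^{\otimes r}$, so $\Ll' \otimes \Ll^\vee$ is an $r$-torsion point of $\mathrm{Pic}^0(X)$, of which there are at most $r^{2q(X)}$. The hard part is the indecomposability step: one must check that the socle/evaluation trick (used successfully in the proof of Theorem \ref{k3}) genuinely rules out every direct-sum decomposition, and that both the aCM condition and the linear independence of the $\epsilon_i$'s deform uniformly as $\Ll$ varies, so that the construction extends to an open subset of $V$ rather than a single fiber.
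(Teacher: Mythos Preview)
Your argument is correct, but it follows a genuinely different route from the paper's. The paper builds its bundle $\Gg_r$ as a \emph{depth-$r$ iterated} extension
\[
0 \to \Gg_{r-1} \to \Gg_r \to \Ll \to 0,
\]
taking a general (hence non-split) extension at each stage, and proves inductively that $h^0(\Gg_r \otimes \Ll^\vee) = 1$; indecomposability then falls out because in any splitting $\Gg_r \cong \Ff_1 \oplus \Ff_2$ each factor $\Ff_i \otimes \Ll^\vee$ would have $\Oo_X$ as a subsheaf (its JH factors are all $\Oo_X$), forcing $h^0(\Gg_r \otimes \Ll^\vee) \ge 2$. You instead build a \emph{depth-$2$} extension $0 \to \Ll^{\oplus(r-1)} \to \Ee_\Ll \to \Ll \to 0$ governed by a linearly independent tuple in $H^1(\Oo_X)^{r-1}$, and run the evaluation-map/cokernel trick exactly as in Proposition~\ref{k1}; this produces a \emph{different} indecomposable bundle (yours has $h^0(\Ee_\Ll \otimes \Ll^\vee) = r-1$, not $1$), but one that is just as good for the statement. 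Your approach is arguably more economical here---no induction, and it recycles an argument already written out in the paper---while the paper's tower construction aligns more naturally with the Mukai/unipotent picture used in Theorem~\ref{cc1} and yields a bundle with a full flag of subbundles. The concern you flag about uniformity in $\Ll$ is not serious: since $\Ext^1_X(\Ll,\Ll) \cong H^1(\Oo_X)$ canonically, the fixed tuple $(\epsilon_1,\dots,\epsilon_{r-1})$ works for every $\Ll$ simultaneously, and the family is simply parametrized by the open locus of aCM $\Ll$ in $\mathrm{Pic}^0(X)$.
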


\begin{proof}
Fix a general line bundle $\Ll\in \mathrm{Pic}^0(X)$. Then as in Remark \ref{oop1} we see that $\Ll$ is aCM. Set $\Gg_0=0$ the zero sheaf and $\Gg _1:= \Ll$. For an integer $r\ge 2$, we define $\Gg_{r}$ inductively as a general sheaf fitting into the following extension
\begin{equation}\label{eer2}
0\to \Gg_{r-1} \stackrel{u}{\to} \Gg_r \stackrel{v}{\to} \Ll \to 0.
\end{equation}
Note that $\Gg_r$ is strictly semistable for any polarization and $\Gg_r\otimes \Ll^\vee$ is an iterated extension of $\Oo_X$ for each $r\ge 1$. Since $\Gg _{r-1}\otimes \Ll^\vee$ is an iterated extension of $\Oo _X$, we have $\det (\Gg _{r-1}\otimes \Ll^\vee)\cong \Oo _X$ and $c_2(\Gg _{r-1}\otimes \Ll^\vee) =0$. Moreover, we may choose $\Gg_r$ admitting a non-trivial extension (\ref{eer2}), because we have $\mathrm{ext}_X^1(\Ll, \Gg_{r-1})>0$; indeed, we have $h^1(\Gg _{r-1}\otimes \Ll^\vee ) \ge q(X)-r+2$, which is clearly true for $r=2$. In general, we get the following exact sequence from (\ref{eer2})
$$H^0(\Oo _X)\to H^1(\Gg _{r-1}\otimes \Ll^\vee )\to H^1(\Gg _r\otimes \Ll^\vee ).$$
Then we may apply the inductive hypothesis and $h^0(\Oo_X)=1$.

Note that the coboundary map $H^0(\Oo _X)\rightarrow H^1(\Gg _{r-1}\otimes \Ll^\vee )$ is zero if and only if (\ref{eer2}) is the trivial extension. Since we take a non-trivial extension at each step, we have $h^0(\Gg _r\otimes \Ll^\vee ) =h^0(\Gg _{r-1}\otimes \Ll^\vee)$. By induction on $r$ we get $h^0(\Gg _r\otimes \Ll^\vee )=1$ for all $r\le q(X)$. Assume now that $\Gg_r$ is decomposable, say $\Gg_r \cong \Ff_1\oplus \Ff_2$ with each $\Ff_i$ nonzero. Then each $\Ff_i \otimes\Ll^\vee $ is a strictly semistable vector bundle with numerically trivial determinant. Since $gr (\Gg _{r-1}\otimes \Ll^\vee ) =\Oo _X^{\oplus (r-1)}$, we that $gr (\Ff _i\otimes \Ll^\vee)$ is trivial and so each $\Ff _i\otimes \Ll^\vee$ has a subsheaf isomorphic to $\Oo _X$. In particular, we have $h^0(\Gg _r\otimes \Ll^\vee )\ge 2$, a contradiction.

Note that $\det (\Gg _r) \cong \Ll^{\otimes r}$ and so there are only finitely many line bundles $\Ll' \in \mathrm{Pic}^0(X)$ such that $\Gg _r$ is also an iterated extension of $\Ll'$. Hence we get the assertion from $\dim \mathrm{Pic}^0(X)=q(X)$.
\end{proof}

\begin{remark}
Let $Y$ be a hyperelliptic surface, i.e. a smooth projective surface with $\omega _Y\ncong \Oo _Y$, $q(Y)=1$ and $\omega _Y^{\otimes 12}\cong \Oo _Y$. In particular, we have $h^2(\Oo_Y) =h^0(\omega _Y)=0$ and so $\chi (\Oo _Y) =0$. Let $X$ be a smooth projective surface birational to $Y$. Then we have $h^i(\Oo_X)=h^i(\Oo_Y)$ for each $i$ and $\omega _X\ncong \Oo _X$ with $h^0(\omega_X^{\otimes 12})=1$. Fix an ample line bundle $\Oo _X(1)$ on $X$ and take a line bundle $\Ll \in \mathrm{Pic}^0(X)\setminus \{\Oo_X, \omega_X^\vee\}$. Then we have $h^0(\Ll)=h^2(\Ll) =0$. Since $\Ll$ is numerically equivalent to $\Oo_X$ and $\chi (\Oo _X) =0$, we have $\chi (\Ll)=0$ and so $h^1(\Ll)=0$. Note that $\Ll(t)$ and $\Ll^\vee \otimes \omega _X(t) $ are ample for $t>0$, because they are numerically equivalent to the ample line bundle $\Oo _X(t)$. So we get $h^1(\Ll(t)) =0$ for all $t\ne 0$ by Kodaira's vanishing and Serre's duality. Thus $\Ll$ is aCM. Now we may construct indecomposable aCM vector bundles $\Gg_r$ of rank $r$ as in the case of abelian surfaces. Indeed, we have $\mathrm{ext}_X^1(\Ll, \Ll)=h^1(\Oo_X)=1$ and $\mathrm{ext}_X^1(\Ll, \Gg_{r-1})>0$. We have $\det (\Gg _r) \cong \Ll^{\otimes r}$. In particular, there are only finitely many line bundles $\Ll'\in \mathrm{Pic}^0(X)$ such that $\Gg_r$ is an iterated extension of $\Ll'$. We get the following result from $q(X)=1$.
\end{remark}
\begin{proposition}\label{hyperelliptic}
Let $X$ be a smooth projective surface, birational to a hyperelliptic surface, with any polarization. For any positive integer $r$, there exists a one-dimensional family $\{\Ee _\alpha \}_{\alpha \in \Gamma}$ of indecomposable aCM vector bundles of rank $r$ on $X$ such that for each $\alpha \in \Gamma$ there are only finitely many $\beta \in \Gamma$ with $\Ee _\beta \cong \Ee _\alpha$.
\end{proposition}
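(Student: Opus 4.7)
The plan is to adapt the construction from Proposition \ref{c2} to this setting, since the Remark preceding the statement has already done essentially all the heavy lifting: it establishes both that a general $\Ll \in \mathrm{Pic}^0(X) \setminus \{\Oo_X, \omega_X^\vee\}$ is aCM with respect to any fixed ample $\Oo_X(1)$, and the key Ext-inequality $\mathrm{ext}_X^1(\Ll, \Gg_{r-1}) > 0$ needed to keep iterating.

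First I would fix an ample line bundle $\Oo_X(1)$ on $X$ and choose a general $\Ll \in \mathrm{Pic}^0(X) \setminus \{\Oo_X, \omega_X^\vee\}$; by the Remark, $\Ll$ is aCM. Setting $\Gg_0 := 0$ and $\Gg_1 := \Ll$, I would then define $\Gg_r$ inductively as a general sheaf fitting into a non-trivial extension
\[
0 \to \Gg_{r-1} \to \Gg_r \to \Ll \to 0,
\]
which exists because $\mathrm{ext}_X^1(\Ll, \Gg_{r-1}) > 0$ (by induction from $\mathrm{ext}_X^1(\Ll,\Ll) = h^1(\Oo_X)=1$, using the long exact sequence obtained by applying $\Hom_X(\Ll,-)$ to the previous step's extension). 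By construction $\Gg_r \otimes \Ll^\vee$ is an iterated extension of $\Oo_X$, so $\Gg_r$ is strictly semistable with respect to any polarization and is an iterated extension of the aCM line bundle $\Ll$, hence aCM.

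For indecomposability I would mimic the argument in Proposition \ref{c2}: using the long exact cohomology sequence of the extension tensored with $\Ll^\vee$, the non-triviality of each extension forces the coboundary $H^0(\Oo_X) \to H^1(\Gg_{r-1} \otimes \Ll^\vee)$ to be injective, so that $h^0(\Gg_r \otimes \Ll^\vee) = h^0(\Gg_{r-1} \otimes \Ll^\vee)$, and by induction equals $1$. If $\Gg_r \cong \Ff_1 \oplus \Ff_2$ were a nontrivial decomposition, then each $\Ff_i \otimes \Ll^\vee$ would be strictly semistable with Jordan-H\"older factors isomorphic to $\Oo_X$, and so each would admit a nonzero section; this would force $h^0(\Gg_r \otimes \Ll^\vee) \ge 2$, a contradiction.

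For the parameter count, the family $\{\Gg_r = \Gg_r(\Ll)\}$ is parametrized by the non-empty open subset $\VV \subset \mathrm{Pic}^0(X)$ of line bundles that are aCM and distinct from the finitely many excluded ones; since $q(X) = 1$, $\dim \mathrm{Pic}^0(X) = 1$, giving a one-dimensional family. Finally, since $\det(\Gg_r) \cong \Ll^{\otimes r}$, any $\Ll'$ with $\Gg_r(\Ll) \cong \Gg_r(\Ll')$ must satisfy $(\Ll')^{\otimes r} \cong \Ll^{\otimes r}$, so $\Ll' \otimes \Ll^{\vee}$ lies in the finite group of $r$-torsion points of $\mathrm{Pic}^0(X)$, giving the required quasi-finiteness. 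The main obstacle, which has already been resolved by the preceding Remark, is verifying that general $\Ll \in \mathrm{Pic}^0(X)$ is aCM despite the absence of a Kodaira vanishing hypothesis with respect to $\Ll$ alone; everything else is formal once this and the Ext-positivity are in hand.
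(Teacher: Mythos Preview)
Your proposal is correct and follows essentially the same approach as the paper. The paper's argument is contained in the Remark immediately preceding Proposition~\ref{hyperelliptic}: it constructs $\Gg_r$ as an iterated extension of an aCM line bundle $\Ll\in\mathrm{Pic}^0(X)\setminus\{\Oo_X,\omega_X^\vee\}$, invokes the same $\mathrm{ext}^1$-positivity, the same indecomposability argument (referred to there as ``as in the case of abelian surfaces'', i.e.\ the $h^0(\Gg_r\otimes\Ll^\vee)=1$ trick from Proposition~\ref{c2}), and the same $r$-torsion count for quasi-finiteness; you have simply spelled these steps out explicitly.
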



\section{Surfaces of general type with ample canonical line bundle}
Let $X$ be an integral projective surface, possibly singular, with ample $\omega_X$ satisfying the following conditions:
\begin{itemize}
\item [(i)] $h^1(\omega _X^{\otimes n}) =0$ for all $n\in \ZZ$;
\item [(ii-$\epsilon$)] $p_g:=h^0(\omega _X)\ge 2+\epsilon$ with $\epsilon \in \{0,1\}$.
\end{itemize}
We set $\Oo _X(1):= \omega _X$ with respect to which we consider aCM vector bundles on $X$.

\begin{remark}
Assume that $X$ is smooth. The canonical line bundle $\omega_X$ is ample if and only if $X$ is a minimal surface of general type without $(-2)$-curves, i.e. a smooth surface of general type without smooth rational curves $D\subset X$ with either $D^2=-1$ or $D^2=-2$; see \cite{bhpv}. There are surfaces $X$ of general type with $p_g= h^0(\omega _X)\le 1$, but most surfaces have $p_g\ge 2$. The condition (i) for $n=0$ is $h^1(\Oo _X)=0$, i.e. the irregularity of $X$ is $q(X)=0$. This is a non-trivial requirement, but it is satisfied in many important cases. By Serre's duality this would imply that $h^1(\omega _X)=q(X)=0$. In characteristic $0$ the condition (i) for $n<0$ comes from Kodaira's vanishing theorem by the ampleness of $\omega _X$. Assume $h^1(\omega _X^{\otimes n}) = 0$ for all $n<0$. By Serre's duality we have $h^1(\omega _X^{\otimes n}) =h^1(\omega _X^{\otimes (1-n)}) =0$ for $n\ge 2$. Thus in characteristic $0$ we have the condition (i) satisfied if and only if $h^1(\Oo _X)=0$.
\end{remark}

By the condition (ii-$\epsilon$), the set
$$\Sigma :=\mathrm{Sing}(X) \cap \big(\text{the base locus of }|\omega_X|\big)$$
is a proper closed subset of $X$. By the same argument in Remark \ref{lle} using Serre's duality we get the following lemma.

\begin{lemma}\label{g1}
For a finite subset $S\subset X\setminus \Sigma$, we have $\mathrm{ext}_X^1(\Ii_{S,X}, \omega _X) =|S|-1$ and a general extension of $\Ii_{S,X}$ by $\omega_X$ is locally free.
\end{lemma}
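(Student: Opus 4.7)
The plan is to mirror the argument of Remark~\ref{lle}, using Serre duality in combination with the vanishings $h^1(\Oo_X)=h^1(\omega_X)=0$ from condition (i). Since $\omega_X$ is an ample line bundle, $X$ is Gorenstein, so Serre duality yields the isomorphism $\mathrm{Ext}_X^1(\Ii_{S,X},\omega_X)\cong H^1(X,\Ii_{S,X})^{\vee}$. The long exact cohomology sequence attached to
\[
0\to \Ii_{S,X}\to \Oo_X\to \Oo_S\to 0,
\]
together with $h^0(\Oo_X)=1$ (by the integrality of $X$), $h^0(\Oo_S)=|S|$, and $h^1(\Oo_X)=0$ (condition (i) at $n=0$), produces $h^1(\Ii_{S,X})=|S|-1$ and hence the first assertion.

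For the local-freeness statement I would invoke the Serre construction. Local freeness of the middle term forces $S\subset X_{\mathrm{reg}}$, because at a Gorenstein but non-regular point $p\in S$ the ideal $\Ii_{p,X}$ has infinite projective dimension over $\Oo_{X,p}$ and thus cannot appear as the quotient of two locally free sheaves. Assuming $S\subset X_{\mathrm{reg}}$, the local Koszul resolution at each $p\in S$ gives $\mathcal{H}om(\Ii_{S,X},\omega_X)\cong \omega_X$ and $\mathcal{E}xt^1(\Ii_{S,X},\omega_X)\cong \Oo_S$, and all higher $\mathcal{E}xt$'s vanish. Feeding $h^1(\omega_X)=0$ (condition (i) at $n=1$), $H^2(\omega_X)\cong H^0(\Oo_X)^{\vee}\cong k$, and $\mathrm{Ext}^2(\Ii_{S,X},\omega_X)\cong H^0(\Ii_{S,X})^{\vee}=0$ into the local-to-global spectral sequence, one extracts the short exact sequence
\[
0\to \mathrm{Ext}^1(\Ii_{S,X},\omega_X)\to \bigoplus_{p\in S}k(p)\to H^2(\omega_X)\to 0,
\]
which reconfirms the dimension count and identifies $\mathrm{Ext}^1$ as a hyperplane inside $k^{|S|}$.

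By the standard Serre construction criterion, the middle term $E$ of the extension corresponding to a class $\epsilon=(\epsilon_p)_{p\in S}$ is locally free at $p\in S$ exactly when $\epsilon_p\neq 0$, so it suffices to show that the surjection $\bigoplus_{p\in S}k(p)\to H^2(\omega_X)$ has nonzero coefficient on each summand: the generic element of its kernel then has every component nonzero, hence corresponds to an $E$ that is locally free at every point of $S$ (and automatically locally free off $S$ since both $\omega_X$ and $\Ii_{S,X}$ are locally free there). By Serre duality the coboundary map $\bigoplus_{p\in S}k(p)\to H^2(\omega_X)$ is dual to the restriction $H^0(\Oo_X)\to H^0(\Oo_S)$, which sends the constant section $1$ to $(1,\dots,1)$; each coefficient is thus nonzero, and the Cayley--Bacharach condition is trivially satisfied. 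The step I expect to require the most care is precisely this Serre-dual identification of the coboundary with the evaluation map on global sections; once this identification is carried out, the rest is a direct transcription of the argument in Remark~\ref{lle} using the vanishings coming from condition (i) at $n=0,1$.
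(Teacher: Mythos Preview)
Your proof is correct and follows exactly the paper's route: Serre duality plus the ideal-sheaf sequence with $h^1(\Oo_X)=0$ for the dimension count, and the (trivially satisfied) Cayley--Bacharach condition for the linear system $|\Oo_X|$ for local freeness. Your local-to-global spectral sequence analysis simply unpacks what the paper compresses into a one-line appeal to Cayley--Bacharach, and your observation that this step needs $S\subset X_{\mathrm{reg}}$ is a correct caveat that the paper leaves implicit.
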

\begin{proof}
For the first assertion, we may apply the same argument in Remark \ref{lle} using Serre's duality. The second assertion is clear, because the Cayley-Bacharach condition for $S$ and the linear system $|\Oo_X|$ is satisfied.
\end{proof}

\begin{proposition}\label{g3}
For a fixed integer $2\le r \le p_g$ and a general subset $S\subset X\setminus \Sigma$ with $|S| = r$, the general sheaf $\Ee$ fitting into an exact sequence
\begin{equation}\label{eqg1}
0 \to \omega _X^{\oplus (r-1)} \to \Ee \to \Ii _{S,X}\to 0
\end{equation}
is an indecomposable and aCM vector bundle of rank $r$.
\end{proposition}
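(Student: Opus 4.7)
The plan mirrors the proof of Proposition \ref{k1}: deduce local freeness by degenerating to a split extension with a rank-two locally free factor, establish indecomposability via the evaluation map by $\omega_X$ (playing the role of the $\Oo_X$-evaluation in the K3 case), and check the aCM property by splitting the vanishing $h^1(\Ee(t))=0$ into the three regimes $t>0$, $t=0$, and $t<0$. The extension class of (\ref{eqg1}) lies in $\Ext_X^1(\Ii_{S,X},\omega_X^{\oplus(r-1)})\cong\Ext_X^1(\Ii_{S,X},\omega_X)^{\oplus(r-1)}$, of dimension $(r-1)^2$ by Lemma \ref{g1}; a general such class corresponds to an $(r-1)$-tuple $(\epsilon_1,\dots,\epsilon_{r-1})$ that is a basis of $\Ext_X^1(\Ii_{S,X},\omega_X)\cong \mathbf{k}^{r-1}$.

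For local freeness, Lemma \ref{g1} directly handles $r=2$, and for $r>2$ the split sheaf $\Gg\oplus\omega_X^{\oplus(r-2)}$ with $\Gg$ locally free from the rank-two case is itself a middle term of an instance of (\ref{eqg1}), so openness of local freeness transfers to the general $\Ee$. For indecomposability, tensoring (\ref{eqg1}) with $\omega_X^\vee$ and invoking $H^0(\Ii_{S,X}\otimes\omega_X^\vee)=0$ (anti-ample $\omega_X^\vee$) together with $h^1(\Oo_X)=0$ (condition (i) at $n=0$) yields $\mathrm{hom}_X(\omega_X,\Ee)=r-1$. Every map $\omega_X\to\Ee$ factors through $\omega_X^{\oplus(r-1)}$ since $\Hom_X(\omega_X,\Ii_{S,X})=0$, so the image of the $\omega_X$-evaluation is exactly $\omega_X^{\oplus(r-1)}\subset\Ee$, with cokernel $\Ii_{S,X}$. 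If $\Ee=\Ff_1\oplus\Ff_2$, the images $\Gg_i\subset\Ff_i$ of the evaluation maps satisfy $\Gg_i\cong\omega_X^{\oplus s_i}$ with $s_1+s_2=r-1$, and $\Ff_i/\Gg_i$ is a torsion-free subsheaf of the rank-one $\Ii_{S,X}$; one of the quotients is forced to vanish, producing a forbidden $\omega_X$-summand of $\Ee$. Its absence is guaranteed by the genericity assumption that $(\epsilon_1,\dots,\epsilon_{r-1})$ be linearly independent.

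For the aCM property, twisting (\ref{eqg1}) by $\omega_X^{\otimes t}$ and using condition (i) to kill $h^1$ of the kernel gives $h^1(\Ee(t))\hookrightarrow h^1(\Ii_{S,X}(t))$. For $t\ge 1$, general position of $S\subset X\setminus\Sigma$ with $|S|=r\le p_g$ makes the $r$ points impose independent conditions on $|\omega_X^{\otimes t}|$, so $h^1(\Ii_{S,X}(t))=0$. At $t=0$ the coboundary $H^1(\Ii_{S,X})\to H^2(\omega_X)^{\oplus(r-1)}\cong\mathbf{k}^{r-1}$ is cup product with $(\epsilon_1,\dots,\epsilon_{r-1})$; under Serre duality $\Ext_X^1(\Ii_{S,X},\omega_X)\cong H^1(\Ii_{S,X})^\vee$, this coboundary is an isomorphism precisely because the $\epsilon_i$ form a basis, yielding $h^1(\Ee)=0$. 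For $t<0$ we deform along $\lambda\in\mathbf{k}$ by taking $\Ee_\lambda$ to be the extension with class $(\epsilon_1,\lambda\epsilon_2,\dots,\lambda\epsilon_{r-1})$: then $\Ee_\lambda\cong\Ee$ for $\lambda\ne 0$ by rescaling, while $\Ee_0\cong\Gg\oplus\omega_X^{\oplus(r-2)}$, where the rank-two $\Gg$ from $\epsilon_1$ has $\det\Gg\cong\omega_X$, so $\Gg^\vee\cong\Gg\otimes\omega_X^{-1}$ and Serre duality gives $h^1(\Gg(t))=h^1(\Gg(-t))$; vanishing at $t\ge 0$ thus propagates to $t<0$, and upper semicontinuity of cohomology in the family $\{\Ee_\lambda\}$ transfers this vanishing from $\Ee_0$ to $\Ee$.

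The main obstacle is the $t=0$ coboundary identification, which requires a careful application of Serre duality to recognise the boundary map as a matrix whose invertibility is equivalent to $(\epsilon_1,\dots,\epsilon_{r-1})$ forming a basis; notably, this same basis condition rules out an $\omega_X$-summand in the indecomposability step, so both key parts of the argument hinge on a single genericity assumption. A secondary technical point is verifying the independent-conditions assertion for $t\ge 1$ in the presence of a possible base locus of $|\omega_X^{\otimes t}|$, which is handled by the general position of $S\subset X\setminus\Sigma$.
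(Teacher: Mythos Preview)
Your proof is correct and follows essentially the same strategy as the paper's: local freeness via the split degeneration $\Gg\oplus\omega_X^{\oplus(r-2)}$, indecomposability via the $\omega_X$-evaluation map (equivalently, the paper tensors by $\omega_X^\vee$ and invokes Proposition~\ref{k1}), and the aCM property handled in three regimes with the $t<0$ case reduced by semicontinuity to the same split extension and the self-duality of $\Gg$. One small imprecision: for $r\ge 3$ the rank-two bundle $\Gg$ has $h^1(\Gg)=r-2\ne 0$, so the Serre-duality symmetry $h^1(\Gg(t))=h^1(\Gg(-t))$ only transports the vanishing from $t\ge 1$ to $t\le -1$ (which is all you need for the $t<0$ step), not from $t\ge 0$ as you wrote; the paper makes this distinction explicit.
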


\begin{proof}
We get that $\Ee$ is locally free with rank $r$ from Lemma \ref{g1}. Indeed, if $r\ge 3$, then $\Ee \cong \Gg \oplus \omega_X^{\oplus (r-2)}$ with a general extension $\Gg$ of $\Ii_{S,X}$ by $\omega_X$ is locally free. Then we may use openness of being locally free. Now since we have $\mathrm{ext}_X^1(\Ii _{S,X}, \omega_X) =r-1$ by Lemma \ref{g1}, the extension (\ref{eqg1}) is induced by a choice of a basis $\{e_1,\dots ,e_{r-1}\}$ of $\Ext_X ^1(\Ii_{S,X}, \omega _X)$. Thus the map $\phi : H^1(\Ii_{S,X}) \rightarrow H^2(\omega_X^{\oplus (r-1)}) \cong \mathbf{k}^{\oplus (r-1)}$ is bijective, and in particular we have $h^1(\Ee)=0$. Recall that we assume $\omega_X \cong \Oo_X(1)$. Then by the condition (i) we get $h^1(\omega_X(n))=0$ for all $n\in \ZZ$ and we get
$$0\to H^1(\Ee(n)) \to H^1(\Ii_{S,X}(n)) \to H^2(\omega_X(n))^{\oplus (r-1)}. $$
Assume first that $n$ is positive and this implies $h^2(\omega_X(n))=h^0(\Oo_X(-n))=0$. Since $S$ is general with $|S|=r\le h^0(\Oo_X(1))\le h^0(\Oo_X(n))$, we get $h^1(\Ii_{S,X}(n))=0$. Thus we have $h^1(\Ee(n))=0$. It remains to show that $h^1(\Ee(-n))=0$ for $n\ge 1$. In fact, it is sufficient to prove the existence of an extension $\Ff$ of $\Ii _{S,X}$ by $\omega _X^{\oplus (r-1)}$ satisfying $h^1(\Ff(-n)) =0$ for all $n\ge 1$. Take $\Ff \cong \Gg \oplus \omega _X^{\oplus (r-2)}$ with a general extension $\Gg$ of $\Ii _{S,X}$ by $\omega _X$ given by $e_1$. By the previous argument, we have $h^1(\Gg(n))=0$ for all $n\ge 1$. By Lemma \ref{g1}, $\Gg$ is locally free with $\det (\Gg) \cong \omega_X$. Serre's duality gives $h^1(\Gg(-n))=h^1(\Gg(n))=0$ for all $n\ge 1$. Thus we get that $\Ee$ is aCM. Note that if $r\ge 3$, then $\Gg$ is not aCM since we have $h^1(\Gg)=r-2$.

For the indecomposability, we may use the same argument in the proof of Proposition \ref{k1} to $\Ee \otimes \omega_X^{\vee}$, because $\Ii_{S,X}\otimes \omega_X^\vee$ is indecomposable.
\end{proof}

Now for the statement in Theorem \ref{thm22}, set $\epsilon =r-2\left \lfloor \frac{r}{2} \right\rfloor$ for which the condition (ii-$\epsilon$) for $X$ is assumed to be satisfied.
\begin{theorem}\label{thm22}
For each integer $r\ge 2$, there exists an $r$-dimensional family $\{\Ee _\alpha\}_{\alpha \in \Gamma}$ of indecomposable aCM vector bundles of rank $r$ on $X$ with $\det (\Ee _\alpha )\cong \omega_X^{\otimes \lceil r/2\rceil}$ and $c_2(\Ee _\alpha )=r$ such that for each $\alpha \in \Gamma$ there are only finitely many $\beta \in \Gamma$ with $\Ee _\beta \cong \Ee_\alpha$.
\end{theorem}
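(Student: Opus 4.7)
The plan is to mimic the proof of Theorem \ref{k3} essentially verbatim, with $\omega_X$ playing the role of $\Oo_X$ as the ``trivial'' line bundle in the kernels of all extensions. Lemma \ref{g1} furnishes the analog of the K3 computation $\mathrm{ext}^1_X(\Ii_{S,X},\Oo_X)=|S|-1$, and the vanishing $h^1(\omega_X^{\otimes n})=0$ for all $n\in\ZZ$ (condition (i)) supplies the only cohomological input needed to transfer Lemmas \ref{n2}, \ref{k00}, \ref{n4}, \ref{n3}, \ref{n5} and their associated remarks from Section 2. Proposition \ref{g3} itself plays the role of Remark \ref{k0}, providing the rank-two aCM seed.

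For $r=2m$ even I fix pairwise disjoint subsets $S_1,\dots,S_m\subset X\setminus\Sigma$ with $|S_i|=2$; for $r=2m+1$ odd I additionally fix $S_0\subset X\setminus\Sigma$ with $|S_0|=3$, disjoint from the other $S_i$, whose existence requires $p_g\ge 3$ (i.e.\ condition (ii-$1$)). I introduce $\II(S_1,\dots,S_i)$ and $\JJ(S_1,\dots,S_i;S_0)$ as in Section 2 and obtain an indecomposable $\Jj\in\II(S_1,\dots,S_m)$ for $r$ even (resp.\ $\tilde\Jj\in\JJ(S_1,\dots,S_{m-1};S_0)$ for $r$ odd) together with its double-dual resolution by $\Oo_X^{\oplus \op{rank}\Jj'}$.

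The central construction is the general nontrivial extension
\begin{equation}\label{eqplan}
0\to\omega_X^{\oplus\ceil{r/2}}\to\Ee\to\Jj'\to 0,
\end{equation}
with $\Jj'=\Jj$ or $\tilde\Jj$. It is non-zero by the analog of Lemma \ref{n4} combined with Lemma \ref{g1}. Local freeness of general $\Ee$ holds by openness in the parameter space, using the special fibre $\bigoplus_i\Gg_{S_i}$ with each $\Gg_{S_i}$ the rank-two aCM bundle attached to $S_i$ by Proposition \ref{g3}. The aCM property is verified by twisting \eqref{eqplan} by $\omega_X^{\otimes t}$ and using condition (i) together with the double-dual resolution of $\Jj'$.

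Indecomposability is the only step where Theorem \ref{k3}'s argument does not carry over unchanged, because $H^0(\omega_X)\ne 0$ and so the evaluation map on $\Ee$ has image strictly larger than the kernel of \eqref{eqplan}. I bypass this by tensoring by $\omega_X^\vee$: since $\Jj'$ embeds into $\Oo_X^{\oplus \op{rank}\Jj'}$ and $h^0(\omega_X^\vee)=0$ (as $\omega_X$ is ample), one has $h^0(\Jj'\otimes\omega_X^\vee)=0$, hence $h^0(\Ee\otimes\omega_X^\vee)=\ceil{r/2}$, and the image of the evaluation map on $\Ee\otimes\omega_X^\vee$ is exactly $\Oo_X^{\oplus\ceil{r/2}}$ with cokernel $\Jj'\otimes\omega_X^\vee$. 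The decomposability argument from Theorem \ref{k3} then applies verbatim, and indecomposability of $\Ee\otimes\omega_X^\vee$ is equivalent to that of $\Ee$. Finally the family is assembled as in Theorem \ref{k3}, by taking a non-empty open subset of the relative $\mathcal{E}xt^1_{p_2}$-sheaf over the incidence variety $\UU$ of disjoint tuples in $X\setminus\Sigma$ and selecting an irreducible $\Gamma$ quasi-finite and dominant over $\UU$. The main obstacle I expect is matching the claimed dimension $r$ (rather than the $2r$ that a naive parametrization by $\UU$ would yield); it is likely that one must restrict the $S_i$ to lie on a fixed general curve in $|\omega_X|$, and verifying that the open conditions (local freeness, aCM, indecomposability) persist on this restricted subfamily is the most delicate step.
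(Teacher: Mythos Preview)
Your proposal is essentially the paper's own proof. The paper's argument is simply to transcribe Theorem \ref{k3} and its supporting lemmas, replacing $\Oo_X$ by $\omega_X$ in the kernel of the master extension; the only modification it records is that the analogue of Lemma \ref{n4} (and Remark \ref{ess}) now yields $\mathrm{ext}^1_X(\Ii_{S_{i+1},X},\Jj)=i$ instead of $2i$, because $\mathrm{ext}^1_X(\Ii_{S_{i+1},X},\Oo_X)=0$ for general $S_{i+1}$ of size $2$ or $3$. Your indecomposability fix---tensor by $\omega_X^\vee$ so that the kernel becomes $\Oo_X^{\oplus\ceil{r/2}}$ and is again the image of the evaluation map---is exactly what the paper already uses in the proof of Proposition \ref{g3}.

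Your closing worry about the dimension is misplaced, and your proposed cure (restricting the $S_i$ to a curve in $|\omega_X|$) is not what the paper does. The paper performs no such restriction: it runs the argument of Theorem \ref{k3} over the same parameter space $\UU$ (resp.\ $\tilde{\UU}$). Since $\Ee$ determines $\Jj'\otimes\omega_X^\vee$ as the cokernel of the evaluation map on $\Ee\otimes\omega_X^\vee$, and hence determines $S_0\cup S_1\cup\cdots\cup S_m$, the finite-to-one property over $\UU$ holds exactly as in the K3 case, giving $\dim\Gamma=\dim\UU=2r$. This is stronger than the stated $r$; the paper does not comment on the discrepancy, and no extra step is required on your part to obtain a family of dimension at least $r$.
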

\begin{proof}
We use the same notations in the proof of Theorem \ref{k3} such as $\II(S_1, \dots, S_i)$ and $\JJ(S_1, \dots, S_i; S_0)$. Then we get the same assertions from Lemma \ref{n2} till Remark \ref{rte}; the only difference occurs in Lemma \ref{n4} and Remark \ref{ess}, where we have
$$\mathrm{ext}_X^1(\Ii_{S_{i+1}, X}, \Jj)=\mathrm{ext}_X^1(\Ii_{S_0, X}, \Jj)=i$$
for $\Jj \in \II(S_1, \dots, S_i)$ from $\mathrm{ext}_X^1(\Ii_{S_{i+1}, X}, \Oo_X)=\mathrm{ext}_X^1(\Ii_{S_0, X}, \Oo_X)=0$. Then we may consider the exact sequences (\ref{eqn2}) and (\ref{uio}) with $\Oo_X$ replaced by $\omega_X$.
\end{proof}


\section{Surfaces mapped to a curve of genus $\ge 3$ not as their Albanese image}\label{gege}

Throughout this section, $X$ is a smooth projective surface admitting a surjective map $v: X \rightarrow C$ with $g=g(C)\ge 3$. Assume that $C$ is such a curve achieving maximum possible genus $g$ and that $q(X)>g$. For example, we may take as $X$ any smooth surface birational to $C\times D$, where $D$ is a smooth curve with $1\le g(D) \le g$; in this case we have $q(X)= g+g(D)$.

\begin{proposition}\label{alb1}
For each positive integer $r$ there exists a family $\{\Ee _\alpha \}_{\alpha \in \Gamma}$ of indecomposable aCM vector bundles of rank $r$ on $X$ such that $\Gamma$ is an integral variety with
$$\dim \Gamma \ge q(X)+\frac{(r-1)(r-2)(g-1)}{2} -\frac{r(r-1)}{2}$$
and each $\Ee_{\alpha}$ is strictly semistable with $\det (\Ee _\alpha )\in \mathrm{Pic}^0(X)$ and
$c_2(\Ee _\alpha )=0$ with respect to any polarization of $X$ such that there are only finitely many $\beta \in \Gamma$ with $\Ee_\beta \cong \Ee _\alpha$.
\end{proposition}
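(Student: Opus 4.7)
The plan is to mimic Theorem \ref{cc1}, pulling back unipotent vector bundles from $C$ and twisting by a general aCM line bundle on $X$. As in Remark \ref{oop1}, a general $\Ll \in \mathrm{Pic}^0(X)$ is aCM with respect to the fixed polarization: for general $\Ll$ the generic vanishing theorem of \cite{gl1,gl2} gives $h^i(\Ll) = 0$ for $i > 0$, while the remaining $h^1(\Ll(t)) = 0$ for $t \neq 0$ follow from Kodaira's vanishing together with Serre duality. The maximality of $g$ among curves dominated by $X$ ensures, via Stein factorization, that $v_*\Oo_X = \Oo_C$, so that the projection formula applies freely to $v$.

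Since $g \geq 3$, on the curve $C$ one inductively builds an irreducible algebraic family of indecomposable unipotent vector bundles $\Ff$ of rank $r$, each arising as an iterated nontrivial extension of $\Oo_C$: at the $i$-th step the extension class lives in $\Ext_C^1(\Oo_C, \Ff_{i-1}) = H^1(C, \Ff_{i-1})$, whose dimension is controlled by Riemann--Roch on $C$. After tracking both the successive Ext groups and the action of the automorphism group of a filtered bundle (equivalently, of the Borel subgroup of $\mathrm{GL}(r)$ acting on flags in $\Oo_C^{\oplus r}$) identifying isomorphic iterated extensions, one obtains a parameter space of such $\Ff$ modulo isomorphism of dimension at least $\binom{r-1}{2}(g-1) - \binom{r}{2}$; the hypothesis $g \geq 3$ (as opposed to merely $g \geq 2$) is what keeps this positive for $r \geq 3$.

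Set $\Ee := v^*\Ff \otimes \Ll$, a rank-$r$ vector bundle on $X$. Because $\Ee$ is an iterated extension of the aCM line bundle $\Ll$, the vanishings propagate through the exact sequences and $\Ee$ is itself aCM; by construction $\Ee$ is strictly semistable with $\det \Ee \cong \Ll^{\otimes r} \in \mathrm{Pic}^0(X)$ and $c_2(\Ee) = 0$. Indecomposability of $\Ee$ follows from the identification $\End_X(\Ee) \cong \End_X(v^*\Ff) \cong \End_C(\Ff)$ provided by the projection formula together with $v_*\Oo_X = \Oo_C$, combined with the indecomposability of $\Ff$ on $C$. The assignment $\Gamma \to \{(\Ll, \Ff)\}$ is finite-to-one, since $\det \Ee \cong \Ll^{\otimes r}$ determines $\Ll$ up to the $r$-torsion of $\mathrm{Pic}^0(X)$, after which $\Ff = v_*(\Ee \otimes \Ll^\vee)$ is uniquely recovered; adding the $q(X)$-dimensional Picard contribution to the dimension of the moduli of $\Ff$ yields the claimed lower bound on $\dim \Gamma$.

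The main technical difficulty lies in the dimension count for the family of indecomposable iterated extensions of $\Oo_C$ on $C$. Unlike in the proof of Theorem \ref{cc1}, there is no direct appeal to Mukai's local Hilbert scheme description \cite{muk}; the geometry of filtered bundles on $C$ must be analyzed by hand, balancing the growth of $h^1(\Ff_{i-1})$ at each stage against the automorphisms of the flag that trivialize extensions, which is where the factor $(g-1)$ and the corrective term $\binom{r}{2}$ enter the formula.
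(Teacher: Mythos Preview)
Your outline matches the paper's proof: build indecomposable iterated extensions $\Aa_r$ of $\Oo_C$ on $C$, pull back along $v$, and twist by a general aCM $\Ll \in \mathrm{Pic}^0(X)$. Two points need care. For the vanishing $h^1(\Ll)=0$, the paper invokes Beauville \cite{beau} rather than Green--Lazarsfeld; the latter, as in Remark \ref{oop1}, requires maximal Albanese dimension, which is not a stated hypothesis here (it does follow, since if the Albanese image were a curve its genus would be $q(X)>g$, contradicting the maximality of $g$, but you should say so). More substantively, your parenthetical ``equivalently, the Borel subgroup of $\mathrm{GL}(r)$ acting on flags in $\Oo_C^{\oplus r}$'' is off: the bundles $\Aa_r$ are indecomposable, not split, so there is no flag in $\Oo_C^{\oplus r}$ to speak of, and the relevant automorphism group has dimension at most $r$, far smaller than the Borel. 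The paper's Lemmas \ref{alb4} and \ref{alb8} carry out precisely the ``analysis by hand'' you anticipate in your last paragraph: they show that every endomorphism of a general $\Aa_r$ preserves the canonical filtration (this is forced by $h^0(\Aa_i/\Aa_j)=1$ for all $j<i$), deduce $\dim\End(\Aa_r)\le r$, and then bound inductively the locus in the parameter space $\mathbf{J}_r$ of bundles isomorphic to a fixed general $\Aa_r$ by $\binom{r}{2}$. That inductive control of $\mathrm{Aut}(\Aa_{r-1})$ acting on extension classes (Lemma \ref{abb1}), not a Borel orbit count, is the actual source of the $\binom{r}{2}$ correction in the formula.
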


Set $\Aa_1:=\Oo_C$ and define inductively a vector bundle $\Aa_{i+1}$ of rank $i+1$ on $C$ to be the middle term of the following extension:
\begin{equation}\label{eqalb0}
0 \to \Aa _i\to \Aa _{i+1}\to \Oo _C \to 0,
\end{equation}
where $\Aa_{i+1}=\Aa_{i+1}(e)$ corresponds to the extension class $e\in \Ext_C^1(\Oo_C, \Aa_i)\cong H^1(\Aa_i)$. The long exact sequence of cohomology of (\ref{eqalb0}) gives $g-1\le h^1(\Aa _{i+1})-h^1(\Aa _i) \le g$. Since we have $g\ge 3$ from the assumption, we get $h^1(\Aa_{i+1})\ne 0$. In particular, we may assume that the extension (\ref{eqalb0}) is non-trivial. The image of the coboundary map $H^0(\Oo _C)\rightarrow H^1(\Aa _i)$ corresponds to the extension (\ref{eqalb0}), up to a sign, and so we get $h^0(\Aa _{i+1}) = h^0(\Aa _i)$ and $h^1(\Aa _{i+1}) =h^1(\Aa _i)+g-1$ for each $i$. By induction, we get
\[
h^0(\Aa_i)=1~\text{ and }~h^1(\Aa_i)=i(g-1)+1.
\]
Note that each $\Aa _i$ is an iterated extension of $\Oo _C$, and in particular it is strictly semistable with $gr (\Aa _i)\cong \Oo _C^{\oplus i}$. Assume $\Aa _i\cong \Bb_1\oplus \Bb_2$ with each $\Bb_i\ne 0$. Since each $\Bb_i$ has a HN-filtration with $\Oo _C$ as its first step, we have $h^0(\Bb_i )>0$ and so $h^0(\Aa_i)\ge 2$, a contradiction. Thus each  $\Aa _i$ is indecomposable.

\begin{remark}\label{ab4}
Let $u: \Aa \to \Bb$ be a surjection of sheaves on $C$. Since $\dim C =1$, we have $h^2(C,\mathrm{ker}(u)) =0$. Thus the surjection $u$ induces a surjective map $H^1(C,\Aa )\to H^1(C,\Bb )$.
\end{remark}

\begin{lemma}\label{abb1}
Let $\Mm, \Dd _1, \Dd _2$ be vector bundles on $C$ fitting into exact sequences
\begin{equation}\label{eqabb1}
0 \to \Mm \stackrel{u_i}{\to} \Dd _i\to \Oo _C\to 0,
\end{equation}
corresponding to an extension class $e_i \in \Ext_C^1(\Oo_C, \Mm)\cong H^1(\Mm)$ for each $i$. If there exists an isomorphism $h: \Dd _2\rightarrow \Dd _1$ such that $h(u_2(\Mm)) = u_1(\Mm)$, then $e_1$ and $e_2$ are in the same orbit of $H^1(\Mm)$ for the action of the group $\mathrm{Aut}(\Mm)$.
\end{lemma}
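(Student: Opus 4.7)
The plan is to turn the hypothesis that $h$ preserves the image of $\Mm$ into a morphism of short exact sequences, and then invoke the standard functoriality of $\operatorname{Ext}^1$ to relate $e_1$ and $e_2$ up to an automorphism of $\Mm$.

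First I would use the hypothesis $h(u_2(\Mm)) = u_1(\Mm)$ to define an automorphism $\alpha \in \Aut(\Mm)$ by
\[
\alpha := u_1^{-1} \circ \left(h\big|_{u_2(\Mm)}\right) \circ u_2,
\]
which is well-defined because $u_i$ is injective with image equal to the (sub)sheaf that $h$ swaps. Since $h$ and $\alpha$ are isomorphisms respecting the subsheaf, passing to quotients induces an automorphism $\beta \in \Aut(\Oo_C) = \mathbf{k}^\ast$; that is, $\beta$ is multiplication by some nonzero scalar $\lambda \in \mathbf{k}^\ast$. By construction the diagram
\[
\begin{array}{ccccccccc}
0 & \to & \Mm & \stackrel{u_2}{\to} & \Dd_2 & \to & \Oo_C & \to & 0\\
 & & \downarrow \alpha & & \downarrow h & & \downarrow \lambda & & \\
0 & \to & \Mm & \stackrel{u_1}{\to} & \Dd_1 & \to & \Oo_C & \to & 0
\end{array}
\]
commutes, realising a morphism of extensions from $e_2$ to $e_1$.

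Next I would apply the standard functoriality of $\operatorname{Ext}^1(-,-)$, which is covariant in the second slot and contravariant in the first. The existence of the above morphism of extensions translates into the equality
\[
\alpha_\ast(e_2) \;=\; \beta^\ast(e_1) \;=\; \lambda\, e_1
\]
inside $\operatorname{Ext}_C^1(\Oo_C,\Mm)\cong H^1(\Mm)$. Equivalently,
\[
e_1 \;=\; \lambda^{-1}\,\alpha_\ast(e_2) \;=\; \bigl(\lambda^{-1}\alpha\bigr)_\ast(e_2).
\]

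Finally, since scalar multiplication by $\lambda^{-1}\in \mathbf{k}^\ast$ on $\Mm$ is an automorphism of the vector bundle, the composition $\phi := \lambda^{-1}\alpha$ lies in $\Aut(\Mm)$, and the identity $e_1 = \phi_\ast(e_2)$ shows that $e_1$ and $e_2$ belong to the same $\Aut(\Mm)$-orbit in $H^1(\Mm)$. The only mildly delicate point is bookkeeping with the scalar $\lambda$: one must notice that although a naïve morphism-of-extensions argument would give only that $\alpha_\ast(e_2)$ and $e_1$ are proportional, this proportionality constant can be absorbed into $\mathrm{Aut}(\Mm)$ because the centre of $\mathrm{Aut}(\Mm)$ contains the scalars. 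Everything else is formal.
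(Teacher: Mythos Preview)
Your proof is correct and follows essentially the same approach as the paper: both build a morphism of short exact sequences from the hypothesis $h(u_2(\Mm))=u_1(\Mm)$, identify the induced map on quotients as multiplication by a nonzero scalar, and then use functoriality of $\Ext^1$ to conclude that $e_1$ and $e_2$ differ by an automorphism of $\Mm$. The only cosmetic difference is that the paper normalizes by replacing $h$ with $c^{-1}h$ so that the right vertical map becomes the identity (and for this it treats the trivial case $e_1=e_2=0$ separately to ensure $c\ne 0$), whereas you absorb the scalar directly into $\Aut(\Mm)$; your observation that $\beta$ is automatically invertible because $h$ is an isomorphism preserving the subsheaf makes this case split unnecessary.
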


\begin{proof}
Note that $h^0(\Mm )\le h^0(\Dd _i)\le h^0(\Mm )+1$, and $h^0(\Mm)=h^0(\Dd _i)$ if and only if $e_i\ne 0$. Since $h$ is an isomorphism, $e_1=0$ if and only if $e_2=0$. Since the assertion is obvious when $e_1=e_2=0$, we may assume $e_1\ne 0$ and $e_2\ne 0$. Since $h(u_2(\Mm)) = u_1(\Mm)$, $h$ induces isomorphisms $h': D_2/u_2(\Mm )\to \Dd _1/u_1(\Mm)$ and $f: \Mm \to \Mm$. Since $\Dd
_i/u_i(\Mm )\cong \Oo _C$, $i=1,2$, $h'$ is induced by the multiplication by a constant, $c$. Note that $e_i$ is determined by the image of $1$ by the coboundary map $H^0(\Oo _C)\to H^1(\Mm )$ in (\ref{eqabb1}). Since $e_1\ne 0$ and $e_2\ne 0$, we have $c\ne 0$. Taking $\left(\frac{1}{c}\right)h$ instead of $h$ we reduce to the case in which $h': \Oo _C\rightarrow \Oo _C$ is the identity map. Thus we get a commutative diagram with exact rows:
$$\begin{array}{ccccccc}
0\to &  \Mm      &\to  &\Dd _2&  \to  &\Oo _C &\to 0 \\
& \downarrow& &\downarrow & &\downarrow & \\
0 \to &\Mm &\to  &\Dd _1&\to &\Oo _C &\to 0,\\
\end{array}$$
in which the three vertical arrows are respectively $f$, $h$ and $\mathrm{Id}_{\Oo _C}$. By the definition of $\Ext_C^1(\Oo _C,\Mm )$ as short exact sequences modulo an equivalence relation, we get $e _1 = f_\ast (e_2)$, i.e. $e_1 \in H^1(\Mm)$ is contained in the orbit of $e_2$ for the action of the group $\mathrm{Aut}(\Mm)$.
\end{proof}

We set $\mathbf{T}_2:= H^1(\Oo _C)\setminus \{0\}$ and consider it as a parameter space, not finite-to-one, for non-trivial extensions of $\Oo _C$ by $\Oo _C$. Then we get a family $\{\Aa _2(e)\}_{e\in \mathbf{T}_2}$ of aCM vector bundles of rank two. Since we have $h^1(\Aa_2(e))=2g-3$ for each $e\in \mathbf{T}_2$, there is a vector bundle $\pi_2 : \mathbf{T}_3' \rightarrow \mathbf{T}_2$ of rank $2g-3$ whose fibre over $\Aa_2(e)$ is $H^1(\Aa_2(e)) \cong \Ext_C^1(\Oo_C, \Aa_2(e))$. Then we get a family $\{\Aa _3(e)\}_{e\in \mathbf{T}'_3}$ of aCM vector bundles of rank three on $C$ such that for each $e\in \mathbf{T} '_2$, $\Aa _3(e)$ is an extension of $\Oo _C$ by $\Aa _2(\pi (e))$. Let $\mathbf{T}_3$ be the non-empty Zariski open subset of $\mathbf{T} '_3$ parametrizing the non-trivial extensions of $\Oo _C$ by $\Aa _2(\pi (e))$. Thus we have a family $\{\Aa _3(e)\}_{e\in \mathbf{T}_3}$ of indecomposable aCM vector bundles of rank three, parametrized by $\mathbf{T} _3$.

Now we define a parameter space $\mathbf{T}_i$ inductively: fix an integer $i \ge 2$ and assume that $\mathbf{T}_i$ is defined, together with a family $\{\Aa_i (e)\}_{e\in \mathbf{T}_i}$ of indecomposable aCM vector bundles of rank $i$, parametrized by $\mathbf{T}_i$. Since we have $h^1(\Aa_i(e))=i(g-1)+1$, there exists a vector bundle $\pi_i : \mathbf{T}'_{i+1} \rightarrow \mathbf{T}_i$ of rank $i(g-1)+1$ and a family $\{\Aa _{i+1}(e)\}_{e\in \mathbf{T}'_{i+1}}$ of aCM vector bundles of rank $i+1$ on $C$ such that for each $e\in \mathbf{T} '_{i+1}$, $\Aa _{i+1}(e)$ is an extension of $\Oo _C$ by $\Aa _i(\pi (e))$. Let $\mathbf{T}_{i+1}$ be the non-empty Zariski open subset of $\mathbf{T} '_{i+1}$ parametrizing the non-trivial extensions of $\Oo _C$ by $\Aa _i(\pi (e))$.

If a vector bundle $\Aa=\Aa_r$ of rank $r$ on $C$ corresponding to $e\in \mathbf{T}_r$ is obtained as a successive extension of $\Oo_C$ by $\Aa_i(e_{i-1})$ corresponding to $e_i \in H^1(\Aa_{i}(e_{i-1}))\setminus \{0\}$ for each $i\le r$, then we simply denote it by $\Aa(e_1, \ldots, e_{r-1}):=\Aa$ and it has a filtration
$$0\subset \Aa_1=\Oo_C \subset \Aa_2=\Aa(e_1) \subset \Aa_3=\Aa(e_1, e_2)\subset \cdots \subset \Aa_r=\Aa(e_1, \dots, e_{r-1}).$$
Fix a general $\Aa = \Aa(e_1,\dots ,e_{r-1})$ that is a non-trivial extension of $\Oo _C$ by $\Aa' :=\Aa(e _1,\dots ,e_{r-2})$. Letting $u_{i,r}: \Aa _i\rightarrow \Aa$ with $1\le i\le r-1$ be the inclusion arising by the extensions reaching $\Aa$, we have the following commutative diagram
$$\begin{array}{ccccccc}
&0 & &0& & & \\
& \downarrow& &\downarrow & &&\\
 &  \Aa_1    & = &\Aa_1  &  & & \\
& \downarrow& &\downarrow & & & \\
0 \to &\Aa'&\to &\Aa &\to & \Oo_C& \to 0\\
&\downarrow& &\downarrow & &\| &\\
0 \to &\Aa'/u_{1,r-1}(\Aa_1) & \to &  \Aa/u_{1,r}(\Aa_1) &\to & \Oo_C &  \to 0\\
&\downarrow& &\downarrow & & & \\
&0 &          &0& & & \\
\end{array}$$
so that $\Aa /u_{1,r}(\Aa _1)$ is an extension of $\Oo _C$ by $\Aa' /u_{1,r}(\Aa _1)$. Iterating the process, we see that $\Aa/u_{1,r}(\Aa _1)$ is an iterated extension of $\Oo _C$.

\begin{lemma}\label{alb4}
Fix a general $\Aa_r=\Aa(e_1, \dots, e_{r-1})\in \mathbf{T}_r$ with a filtration $\Aa _1\subset \cdots \subset \Aa _{r-1}\subset \Aa _r$. Then we have
\begin{itemize}
\item [(i)] $h^0(\Aa _i/\Aa _j) =1$ for all $1\le j<i\le r$;
\item [(ii)] $f(\Aa _i)\subset \Aa _i$ for any $f\in \End (\Aa _r)$ and each $i$;
\item [(iii)] $\dim \End (\Aa _r)\le r$ and $\dim \End (\Aa _r)-\dim (\Aa _{r-1}) \le 1$.
\item [(iv)] $h(\Aa_i)=\Bb_i$ for all $i$ and any isomorphism $h: \Bb_r \rightarrow \Aa_r$, where $\Bb_r\in \mathbf{T}_r$ general with a filtration $\Bb_1 \subset \cdots \subset \Bb_{r-1} \subset \Bb_r$.
\end{itemize}
\end{lemma}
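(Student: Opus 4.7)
The plan is to prove (i) by a cohomological induction exploiting the genericity of the extension classes $e_i$, and then to derive (ii)--(iv) uniformly via the principle that $\Aa_1$ (and more generally $\Aa_{i+1}/\Aa_i$) is the image of the evaluation map of $\Aa_r$ (resp.\ of $\Aa_r/\Aa_i$).

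First, I would prove (i) by induction on $i-j$. The base case $i-j=1$ is immediate since $\Aa_i/\Aa_{i-1}\cong\Oo_C$. For the inductive step, applying $H^0(-)$ to $0\to\Aa_i/\Aa_j\to\Aa_{i+1}/\Aa_j\to\Oo_C\to 0$ yields a connecting map $k=H^0(\Oo_C)\to H^1(\Aa_i/\Aa_j)$ whose value on $1$ is the image of the extension class $e_i\in H^1(\Aa_i)$ under the surjection $H^1(\Aa_i)\twoheadrightarrow H^1(\Aa_i/\Aa_j)$ of Remark \ref{ab4}. Its kernel is the image of $H^1(\Aa_j)\to H^1(\Aa_i)$, a subspace of codimension at least $(i-j)(g-1)\ge 2$, so a generic $e_i$ avoids it and the coboundary is nonzero; the long exact sequence then yields $h^0(\Aa_{i+1}/\Aa_j)=h^0(\Aa_i/\Aa_j)=1$. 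Running the same argument with $j=0$ shows $h^0(\Aa_k)=1$ for all $k$ and that each inclusion-induced map $H^0(\Aa_j)\to H^0(\Aa_k)$ is an isomorphism; consequently the image of the evaluation map $H^0(\Aa_r)\otimes\Oo_C\to\Aa_r$ equals $\Aa_1$.

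For (ii), I would induct on $i$. The base $i=1$ is immediate because any $f\in\End(\Aa_r)$ commutes with evaluation and so preserves its image $\Aa_1$. Assuming $f(\Aa_j)\subset\Aa_j$ for $j\le i$, the endomorphism $f$ descends to $\bar f\in\End(\Aa_r/\Aa_i)$; the induced filtration $0\subset \Aa_{i+1}/\Aa_i\subset\dots\subset\Aa_r/\Aa_i$ still satisfies the hypothesis of (i), so the same evaluation argument identifies the image of the evaluation map of $\Aa_r/\Aa_i$ with $\Aa_{i+1}/\Aa_i$, whence $\bar f$ preserves it and $f(\Aa_{i+1})\subset\Aa_{i+1}$. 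Part (iii) then follows from (ii): the restriction map $\rho\colon\End(\Aa_r)\to\End(\Aa_{r-1})$, $f\mapsto f|_{\Aa_{r-1}}$, is well-defined, and $\ker\rho$ consists of endomorphisms vanishing on $\Aa_{r-1}$, which factor as $\Aa_r\twoheadrightarrow\Aa_r/\Aa_{r-1}\cong\Oo_C\to\Aa_r$ and thus form $\Hom(\Oo_C,\Aa_r)=H^0(\Aa_r)\cong k$. Induction from $\dim\End(\Aa_1)=1$ then gives both inequalities of (iii). Finally, (iv) is essentially the argument of (ii) applied to the isomorphism $h$ in place of $f$: $h$ commutes with the evaluation maps on both sides, so $h(\Bb_1)=\Aa_1$, and descending to $\bar h\colon \Bb_r/\Bb_1\to\Aa_r/\Aa_1$ and iterating yields $h(\Bb_i)=\Aa_i$ for every $i$ (using that $h$ is an isomorphism and $\Bb_i,\Aa_i$ are subbundles of the same rank).

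The main technical obstacle is ensuring that the induced filtration on each quotient $\Aa_r/\Aa_i$ remains generic enough to reapply (i) and the image-of-evaluation identification. This reduces, via Remark \ref{ab4} and the codimension estimate in (i), to checking that the projection of a generic $e_k\in H^1(\Aa_k)$ to any relevant quotient $H^1$ remains a generic, hence nonzero, class; this is automatic from the iterative genericity built into the construction of $\Aa_r$.
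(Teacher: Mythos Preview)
Your proof is correct and follows essentially the same strategy as the paper: prove (i) by induction showing that genericity of each $e_i$ forces the relevant coboundary map to be nonzero, and then deduce (ii)--(iv) from the fact that $\Aa_1$ (and inductively each $\Aa_{i+1}/\Aa_i$) is the image of the evaluation map, hence preserved by any endomorphism or isomorphism. The only cosmetic difference is in (iii), where you restrict to $\Aa_{r-1}$ and identify the kernel with $H^0(\Aa_r)\cong\mathbf{k}$, whereas the paper instead subtracts a scalar multiple of the identity to kill $\Aa_1$ and passes to $\End(\Aa_r/\Aa_1)$; both routes yield the bound $\dim\End(\Aa_r)\le r$.
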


\begin{proof}
For (i) consider the following sequence, obtained from (\ref{eqalb0}):
\begin{equation}\label{equu2}
0\to \Aa _i/\Aa _j\to \Aa _{i+1}/\Aa _j\to\Oo _C\to 0.
\end{equation}
Since $e_i\in H^1(\Aa _i)$ is general by the generality of $\Aa _r$, we get that (\ref{equu2}) is a general extension and $h^0(\Aa _{i+1}/\Aa _j) =h^0(\Aa _i/\Aa _j)$. Thus to prove the assertion for $j=1$ it is enough to show it for the case $i=2$, which is obvious from $\Aa _2/\Aa _1\cong \Oo _C$. For $j\ge 2$ we use (\ref{equu2}) starting from the case $i=j+1$, when we have $\Aa _{j+1}/\Aa _j\cong \Oo _C$.

For (ii) note first that $\Aa _1=\Oo _C$ and $h^0(\Aa _r)=1$. This implies that $\Aa _1$ is the image of the evaluation map $H^0(\Aa _r)\otimes \Oo _C\rightarrow \Aa _r$ and so $f(\Aa _1)\subseteq \Aa _1$, concluding the case $r=2$. Now $f$ induces a map $f': \Aa _r/\Aa _1\rightarrow \Aa _r/\Aa_1$. Since $h^0(\Aa _r/\Aa _1)=1$ by (i) and $\Aa _2/\Aa _1\cong \Oo _C$, we get $f'(\Aa _2/\Aa _1)\subseteq \Aa _2/\Aa _1$ and so $f(\Aa _2)\subseteq \Aa _2$. Thus we get the assertion by continuing this process together with (i).

For (iii) since the case $r=1$ is trivial, we may assume $r\ge 2$ and use induction on $r$. For $f\in \End (\Aa _r)$, we have $\Aa_1 = \Oo_C$ and $f(\Aa _1) \subseteq \Aa _1$ by (ii). Thus there is $c\in \mathbf{k}$ such that $(f-c{\cdot}\mathrm{Id}_{\Aa _r})(\Aa _1)=0$, and $f-c{\cdot}\mathrm{Id}_{\Aa _r}$ is uniquely determined by $f'\in \End (\Aa _r/\Aa _1)$. Since we may apply (i) and (ii) to $\Aa _r/\Aa _1$, we conclude by induction on $r$.

For (iv) note that $\Aa _1$ (resp. $\Bb_1)$ is the image of the evaluation map of $\Aa_r$ (resp. $\Bb _r$) and $h$ is an isomorphism. In particular, we have $h(\Aa _1)=\Bb_1$ and so $h$ induces an isomorphism $h': \Aa _r/\Aa _1\rightarrow \Bb _r/\Bb _1$. Since $h^0(\Aa _i/\Aa _j) =h^0(\Bb _i/\Bb _j) =1$ for all $i>j$ by (i), we iterate the previous argument.
\end{proof}

Define a subset $\mathbf{J}_r$ to be
\begin{align*}
\mathbf{J}_r=\Bigg\{ e\in \mathbf{T}_r~\Bigg|~ \makecell{\Aa_r(e) \text{ admits a filtration }\Aa_1 \subset \cdots \subset \Aa_{r-1} \subset \Aa_r \\ \text{ such that }h^0(\Aa_i/\Aa_j)=1 \text{ for all }1\le j<i\le r}\Bigg\},
\end{align*}
i.e. the non-empty open subset of $\mathbf{T}_r$ parametrizing the vector bundles $\Aa _r$ satisfying (i) of Lemma \ref{alb4}; thus $\Aa_r$ satisfies (ii), (iii) and (iv) of Lemma \ref{alb4}.

\begin{lemma}\label{alb8}
For a general $\Aa _r\in \mathbf{J}_r$ there exists an algebraic subset of $\mathbf{J}_r$, parametrizing the  vector bundles isomorphic to $\Aa _r$, with dimension at most $\frac{r(r-1)}{2}$.
\end{lemma}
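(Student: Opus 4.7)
The plan is to induct on $r$, using the forgetful projection $\pi_{r-1}:\mathbf{J}_r\to \mathbf{J}_{r-1}$ that sends $\Aa(e_1,\dots,e_{r-1})$ to $\Aa(e_1,\dots,e_{r-2})$. For a general $\Aa_r\in \mathbf{J}_r$ write $Z(\Aa_r)\subseteq \mathbf{J}_r$ for the locally closed locus of classes whose associated bundle is isomorphic to $\Aa_r$, and set $N(r):=\dim Z(\Aa_r)$. The goal is $N(r)\le \tfrac{r(r-1)}{2}$, with trivial base case $N(1)=0$ since $\mathbf{J}_1$ is a point.

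For the inductive step I would first check that $\pi_{r-1}(Z(\Aa_r))\subseteq Z(\Aa_{r-1})$. Indeed, by Lemma \ref{alb4}(iv) any isomorphism between two (general) vector bundles in $\mathbf{J}_r$ sends the $(r-1)$-st step of the filtration of one to the $(r-1)$-st step of the other. So if $[\Bb_r]\in Z(\Aa_r)$ then $\Bb_{r-1}\cong \Aa_{r-1}$, and the inductive hypothesis applied to the general element $\Aa_{r-1}\in \mathbf{J}_{r-1}$ gives $\dim \pi_{r-1}(Z(\Aa_r))\le N(r-1)\le \tfrac{(r-1)(r-2)}{2}$.

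Next I would bound the generic fibre dimension of $\pi_{r-1}|_{Z(\Aa_r)}$. Fix $[\Bb_{r-1}]$ in the image. The fibre $\pi_{r-1}^{-1}([\Bb_{r-1}])\cap Z(\Aa_r)$ consists of non-trivial extension classes $e\in H^1(\Bb_{r-1})$ whose middle term is isomorphic to $\Aa_r$. Given two such classes $e_1,e_2$ with middle terms $\Dd_1,\Dd_2$, an isomorphism $h:\Dd_1\to \Dd_2$ exists (both are isomorphic to $\Aa_r$), and by Lemma \ref{alb4}(iv) $h$ carries the distinguished subsheaf isomorphic to $\Bb_{r-1}$ inside $\Dd_1$ (the $(r-1)$-st filtration step) onto the corresponding subsheaf of $\Dd_2$. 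Lemma \ref{abb1} then places $e_1$ and $e_2$ in a common $\mathrm{Aut}(\Bb_{r-1})$-orbit in $H^1(\Bb_{r-1})$, so the fibre is contained in a single such orbit and has dimension at most $\dim \mathrm{Aut}(\Bb_{r-1})=\dim \End(\Bb_{r-1})\le r-1$ by Lemma \ref{alb4}(iii).

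Combining the base-plus-fibre estimate yields $N(r)\le \tfrac{(r-1)(r-2)}{2}+(r-1)=\tfrac{r(r-1)}{2}$, completing the induction. The main delicate point is the fibre estimate: it requires Lemma \ref{alb4}(iv) to arrange that an isomorphism between middle terms preserves the distinguished subsheaf $\Bb_{r-1}$, which is what allows Lemma \ref{abb1} to translate an isomorphism between middle terms into $\mathrm{Aut}(\Bb_{r-1})$-equivalence of extension classes; the resulting bound then reduces to the endomorphism estimate of Lemma \ref{alb4}(iii).
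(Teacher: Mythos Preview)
Your argument is correct and follows essentially the same route as the paper: induct on $r$, use Lemma \ref{alb4}(iv) to show that the projection $\pi_{r-1}$ carries the isomorphism-class locus into $Z(\Aa_{r-1})$, and bound the fibre over a fixed $[\Bb_{r-1}]$ by a single $\mathrm{Aut}(\Bb_{r-1})$-orbit via Lemma \ref{abb1}, invoking Lemma \ref{alb4}(iii) for the estimate $\dim\End(\Bb_{r-1})\le r-1$. The paper's proof is organized identically, only phrased in terms of choosing $\Mm\in\mathbf{J}'$ and bounding the set $\mathbf{T}'$ of extensions of $\Oo_C$ by $\Mm$ isomorphic to $\Aa_r$.
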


\begin{proof}
We use induction on $r$; the case $r=1$ is trivial, because $\mathbf{J} _1=\mathbf{T}_1= \{\Oo _C\}$. We assume that $r\ge2$ and fix $\Bb _r\in \mathbf{J}_r$, isomorphic to $\Aa _r$, with a filtration $\Bb _1\subset \cdots \subset \Bb _r$. For any isomorphism $h: \Bb _r\rightarrow \Aa _r$, we have $h(\Bb _{r-1}) = \Aa _{r-1}$ by (iv) of Lemma \ref{alb4}. Since $\Aa _{r-1}$ is also general in $\mathbf{J}_{r-1}$, by inductive assumption there is an algebraic subset $\mathbf{J}'$ of $\mathbf{J}_{r-1}$ parametrizing the vector bundles isomorphic to $\Aa _{r-1}$. Fix $\Mm \in \mathbf{J}'$ and consider the subset $\mathbf{T}'  \subset \mathbf{T}_r$ of all extensions of $\Oo _C$ by $\Mm$ which are isomorphic to $\Aa _r$. By Lemma \ref{abb1} and (iii) of Lemma \ref{alb4}, we have $\dim \mathbf{T}' \le r-1$ and we get the assertion.
\end{proof}

\begin{proof}[Proof of Proposition \ref{alb1}:]
Note that
\[
 g-1+\sum _{i=2}^{r-1} \left(i(g-1)-1\right) -\sum _{i=1}^{r-1} i=\frac{(r-1)(r-2)(g-1)}{2} -\frac{r(r-1)}{2}.
\]
Set $\Delta:=\{v ^\ast (\Aa)~|~\Aa\in \mathbf{J}_r\}$ and then each element of $\Delta$ is indecomposable, because each $\Aa \in \mathbf{J}_r$ is indecomposable. Since we have $v_*v^*\Ff \cong \Ff$ for any vector bundle $\Ff$ on $C$ by the projection formula and $v_*\Oo_X\cong \Oo_C$, we have $v^*\Aa \cong v^*\Bb$ if and only if $\Aa \cong \Bb$ for any $v^*\Aa, v^*\Bb \in \Delta$.

Fix a general $\Ll\in \mathrm{Pic}^0(X)$ and set $\Theta _{\Ll}:=\{\Gg \otimes \Ll~|~\Gg\in \Delta\}$. Each element of $\Theta _{\Ll}$ is an indecomposable vector bundle of rank $r$ on $X$ and the isomorphism classes of elements in $\Theta _{\Ll}$ are also parametrized by $\mathbf{J}_r$. We have $h^1(\Ll) =0$ by \cite[Th. 0.1]{beau}, because $q(X)>g$ and by our definition of $g$ there is no non-constant morphism from $X$ to a curve of genus $q(X)$. Fix a positive interger $t$. By Kleiman's numerical criterion of ampleness in \cite{k}, $\Ll^\vee (t)$ and $\omega _X^\vee \otimes \Ll(t)$ are ample. So Kodaira' vanishing gives $h^1(\Ll(-t)) = h^1(\omega_X\otimes \Ll^\vee (-t)) =0$. On the other hand, by Serre's duality we get $h^1(\Ll(t)) = h^1(\omega _X\otimes \Ll^\vee (-t)) =0$. Thus $\Ll$ is aCM.

Since each element of $\Theta _{\Ll}$ is an iterated extension of $\Ll$, each element of $\Theta _{\Ll}$ is also aCM. Note that each element of $\Theta _{\Ll}$ is strictly semistable with $gr (\Aa _r)\cong \Ll^{\oplus r}$ and so no element of $\Theta _{\Ll}$ is isomorphic to an element of $\Theta_{\Ll'}$ with $\Ll\ncong \Ll'$. Now we may vary the general $\Ll \in \mathrm{Pic}^0(X)$ to obtain a family $\Gamma$ whose fibre over $\Ll$ is $\Theta_{\Ll}$. Then we get the inequality in the assertion and all the requirements for $\Gamma$ are satisfied.
\end{proof}


\bibliographystyle{amsplain}
\providecommand{\bysame}{\leavevmode\hbox to3em{\hrulefill}\thinspace}
\providecommand{\MR}{\relax\ifhmode\unskip\space\fi MR }
\providecommand{\MRhref}[2]{%
  \href{http://www.ams.org/mathscinet-getitem?mr=#1}{#2}
}
\providecommand{\href}[2]{#2}

\end{document}